\documentclass[letterpaper, 11pt,  reqno]{amsart}
\usepackage[margin=1.2in,marginparwidth=1.5cm, marginparsep=0.5cm]{geometry}

\usepackage{amsmath,amssymb,amscd,amsthm,amsxtra, esint, xcolor}
\usepackage[implicit=true]{hyperref}

\usepackage[shortlabels]{enumitem}

\allowdisplaybreaks[2]

\usepackage{color}

\definecolor{gr}{rgb}   {0.,   0.69,   0.23 }
\definecolor{bl}{rgb}   {0.,   0.5,   1. }
\definecolor{mg}{rgb}   {0.85,  0.,    0.85}
\definecolor{yl}{rgb}   {0.8,  0.7,   0.}
\definecolor{or}{rgb}  {0.7,0.2,0.2}

\newtheorem{theorem}{Theorem} [section]

\newtheorem{lemma}[theorem]{Lemma}
\newtheorem{proposition}[theorem]{Proposition}
\newtheorem{remark}[theorem]{Remark}

\newtheorem{definition}[theorem]{Definition}

\newcommand{\I}{\hspace{0.5mm}\text{I}\hspace{0.5mm}}

\newcommand{\II}{\text{I \hspace{-2.8mm} I} }

\newcommand{\noi}{\noindent}
\newcommand{\Z}{\mathbb{Z}}
\newcommand{\R}{\mathbb{R}}
\newcommand{\C}{\mathbb{C}}
\newcommand{\T}{\mathbb{T}}

\newcommand{\HI}{\textup{HI}}
\newcommand{\lo}{\textup{lo}}
\newcommand{\LO}{\textup{LO}}

\let\P= \undefined
\newcommand{\P}{\mathbf{P}}

\newcommand{\Q}{\mathbf{Q}}

\newcommand{\GG}{\mathcal{G}}

\newcommand{\F}{\mathcal{F}}

\newcommand{\al}{\alpha}
\newcommand{\be}{\beta}
\newcommand{\dl}{\delta}

\newcommand{\V}{\mathcal{V}}

\newcommand{\eps}{\varepsilon}

\newcommand{\g}{\gamma}

\newcommand{\ld}{\lambda}

\newcommand{\s}{\sigma}

\newcommand{\ft}{\widehat}
\newcommand{\Ft}{{\mathcal{F}}}
\newcommand{\wt}{\widetilde}
\newcommand{\cj}{\overline}
\newcommand{\dx}{\partial_x}
\newcommand{\dy}{\partial_y}

\newcommand{\dt}{\partial_t}

\newcommand{\ta}{\theta}

\renewcommand{\l}{\ell}
\renewcommand{\o}{\omega}

\newcommand{\les}{\lesssim}
\newcommand{\ges}{\gtrsim}

\newcommand{\jb}[1]
{\langle #1 \rangle}

\newcommand{\ind}{\mathbf 1}

\renewcommand{\S}{\mathcal{S}}

\newcommand{\M}{\mathcal{M}}

\newcommand{\N}{\mathbb{N}}
\newcommand{\NN}{\mathcal{N}}

\newtheorem*{ackno}{Acknowledgements}

\renewcommand{\H}{\mathcal{H}}

\newcommand{\Pbhi}{\mathbf{P}_{\textup{hi}} }
\newcommand{\Pblo}{\mathbf{P}_{\textup{lo}} }
\newcommand{\Pbhip}{\mathbf{P}_{\textup{+,hi}} }
\newcommand{\PbHIp}{\mathbf{P}_{\textup{+,HI}} }
\newcommand{\PbHI}{\mathbf{P}_{\textup{HI}}}
\newcommand{\PbLO}{\mathbf{P}_{\textup{LO}}}

\newcommand{\TT}{\mathcal{T}}

\newcommand{\Id}{\textup{Id}}

\numberwithin{equation}{section}
\numberwithin{theorem}{section}

\makeatletter
\@namedef{subjclassname@2020}{\textup{2020} Mathematics Subject Classification}
\makeatother

\begin{document}
\baselineskip = 14pt

\title[The extended SBO system]{Local and global well-posedness for the extended Schr\"{o}dinger-Benjamin-Ono system}

\author[P.~Dai, J.~Forlano]
{Puti Dai and Justin Forlano}

\address{Puti Dai,
School of Mathematics, Monash University, VIC 3800, Australia}

\email{Puti.Dai1@monash.edu}

\address{Justin Forlano,
School of Mathematics, Monash University, VIC 3800, Australia}

\email{justin.forlano@monash.edu}

\subjclass[2020]{35A01, 35Q35, 35Q55}

\keywords{
well-posedness, global well-posedness, gauge transform, Benjamin-Ono, Schr\"{o}dinger-Benjamin-Ono system}


\begin{abstract} 
We study the well-posedness problem for the extended Schr\"{o}dinger-Benjamin-Ono system (eSBO) on the real line. This system couples a Schr\"{o}dinger field $u$ with a Benjamin-Ono type field $v$, including a term of the form $\dx(v^2)$. This latter term, just as in the case of the Benjamin-Ono equation, causes the system to become quasilinear and unsolvable via Picard iteration. 
We prove that eSBO is locally well-posed in $H^{s+\frac 12}(\R)\times H^{s}(\R)$ for any $s\geq 0$. In particular, this result covers the energy space at $s=\frac 12$, yielding global well-posedness in $H^{1}(\R)\times H^{\frac 12}(\R)$ with a small $L^2$-assumption on the Schr\"{o}dinger part of the initial data. 
\end{abstract}

\maketitle

\tableofcontents

\section{Introduction}

We study the Cauchy problem for the
extended Schr\"{o}dinger-Benjamin-Ono system  (eSBO):
\begin{equation}
\left\{
\begin{aligned}
  & i\dt u+\dx^2 u = uv+\be |u|^2 u,\\
  & \dt v-\H \dx^2 v= \dx(|u|^2-\rho v^2),\\
   & (u,v)|_{t=0}=(u_0,v_0),
\end{aligned}
 \right. \label{SBO}
\end{equation}
 where $u:\R\times \R\to \C$, $v:\R\times \R\to \R$, $\be,\rho\in \R$, and 
  $\H$ is the Hilbert transform which is defined by the singular integral operator
  \begin{align*}
\H f (x)  =\text{p.v.} \frac{1}{\pi} \int_{\R} \frac{ f(y)}{x-y}dy.
\end{align*} 
Physically, the eSBO system \eqref{SBO} arises as a particular case of the following general system which
models the interaction of a short wave (the Schr\"odinger part) and a long wave (the Benjamin-Ono part): 
\begin{equation}
\left\{
\begin{aligned}
  & i\dt u+\dx^2 u = \al uv+\be |u|^2 u,\\
  & \dt v+\nu P(\dx) v= \g \dx(|u|^2) + \ld \dx v^2,
\end{aligned}
 \right. \label{gSBO}
\end{equation}
where $\al, \be, \nu, \g, \ld\in \R$ and $P(\dx)$ is a constant coefficient differential operator.
Such systems arise ubiquitously in applications \cite{App1, App2, App3, App4, App5}, from capillary and gravity water waves, ion-acoustic waves, and sonic-Langmuir waves in plasmas.
In particular, special interest is taken to the following two cases:
 First, when $P(\dx) = -\H \dx^2$, $\be=0$ and $\ld=0$:
\begin{equation}
\left\{
\begin{aligned}
  & i\dt u+\dx^2 u = \al uv,\\
  & \dt v-\nu \H  \dx^2 v= \g \dx(|u|^2),
\end{aligned}
 \right. \label{SBo}
\end{equation}
which is known as the Schr\"{o}dinger-Benjamin-Ono system (SBO), first derived by Funakoshi-Oikawa \cite{App1}.  Second, when $P(\dx)= \nu \dx^3$ and, in general, $\ld\neq 0$, 
\begin{equation}
\left\{
\begin{aligned}
  & i\dt u+\dx^2 u = \al v+\be |u|^2 u,\\
  & \dt v-\nu\dx^3 v= \g \dx(|u|^2) + \ld \dx v^2,
\end{aligned}
 \right. \label{SKDV}
\end{equation}
which is the so-called Schr\"{o}dinger-Korteweg-de Vries system (SKdV). See \cite{CL} for further references on the applicability of SKdV \eqref{SKDV}.
Relative to \eqref{SBo} and \eqref{SKDV}, the eSBO system \eqref{SBO} involves the most difficult parts of both previous systems: $\ld=-\rho \neq 0$ and the weaker nonlocal dispersion $P(\dx)= -\H\dx^2$.

Our main goal in this article is to advance the well-posedness theory for the eSBO system \eqref{SBO}. To motivate the function spaces, we note that smooth solutions to \eqref{SBO} enjoy a number of conservation laws: 
    \begin{align}
    \begin{split}
  E_0 (v)& = \int_{\R} v(t,x)dx  \quad \text{and} \quad 
  E_1(u)  =\int_{\R} |u(t,x)|^2 dx\\
  E_2(u,v)& =\text{Im} \int_{\R} u(t,x)  \cj{ \dx u(t,x)} + \frac{1}{2}\int_{\R} v(t,x)^2 dx\\
  H(u,v) & = \frac12 \int_{\R} ( |\dx|^{\frac 12} v)^2 - \frac{\rho}{3} \int_{\R} v^3 dx + \int_{\R} v |u|^2 dx + \frac{\be}{2} \int_{\R}|u|^4 dx + \int_{\R} |\dx u|^2 dx,
  \end{split} \label{energy}
\end{align}
  where we will refer to $H$ as the energy and $E_1$ as the mass.
In view of the conserved energy $H$ in \eqref{energy},
it is natural to study the well-posedness of the system \eqref{SBO} for the initial data in 
\begin{align*}
(u_0,v_0) \in H^{s+\frac 12}(\R) \times H^{s}(\R)
\end{align*}
where $s\in \R$, where the case $s=\frac{1}{2}$ constitutes the energy space.

To discuss what is known, let us first recall the current well-posedness theory for SBO \eqref{SBo}, essentially corresponding to the case $\rho=0$ in \eqref{SBO}.  
Within the space $H^{s+\frac{1}{2}}(\R)\times H^{s}(\R)$, local well-posedness for any $s\geq -\frac 12$ and when $|\nu|\neq 1$ (the non-resonant case) was first shown in \cite{SBO1}.  Global well-posedness was also established in \cite{SBO1} for some range of parameters in $\al, \g$ and $\nu<0$ in $s\geq \frac 12$ using suitable conservation laws similar to \eqref{energy}. Pecher \cite{SBO2} then extended the local well-posedness to the resonant case $|\nu|=1$ for $s>-\frac 12$, and used the I-method to establish global well-posedness, for the same parameter ranges, for $s>-\frac 16$.  This latter global result was improved by Angulo-Matheus-Pilod \cite{SBO4} to any $s\geq -\frac 12$ using a method due to Colliander-Holmer-Tzirakis \cite{CHT}.
Later, Domingues \cite{SBO3} proved local well-posedness results in the non-resonant case in $H^{s'}(\R) \times H^{s}(\R)$ for sharp ranges of $(s,s')$, not necessarily restricted to $s'=s+\frac 12$. We also point out the results of Oh \cite{OhSBO} in the setting of periodic boundary conditions, in particular the construction of global-in-time solutions on the support of the Gibbs measure.
 
The local well-posedness arguments mentioned above are based on contraction mapping arguments using the Fourier restriction norm method \cite{BO93, BO932}. 
Consequently, in these regimes the data to solution map is analytic, and thus \eqref{SBo} is semilinear.
Moving back to \eqref{SBO}, as soon as $\rho\neq 0$, the Cauchy problem becomes quasilinear, in the sense that for any $s\in \R$ the data to solution map cannot be $C^2$. This result can be shown following the strategy in \cite{MST}, which showed such a failure for the Benjamin-Ono equation (BO):
\begin{align}
\dt v - \H \dx^2 v= \dx( v^2). \label{BO}
\end{align}
This implies that the previously cited results for \eqref{SBo} cannot be adapted to handle \eqref{SBO} when $\rho \neq 0$.  
Thus the low-regularity well-posedness analysis of \eqref{SBO} is more similar to that of BO \eqref{BO} rather than SBO \eqref{SBo}. 

By now, the well-posedness theory for \eqref{BO} is quite well-understood, see \cite{Iorio, KT, TAO04, IK, BP, MP, IT}, culminating in the global well-posedness in $H^{s}(\R)$ for any $s\geq 0$ by using a gauge transformation and Fourier analytic/dispersive arguments. Using the complete integrability of \eqref{BO}, this was pushed to the full scaling sub-critical regime $s>-\frac 12$ in \cite{KLV}. See also \cite{Molinet1, GKT-2} for analogous results in the periodic setting.

However, we point out that \eqref{SBO} is a nonlinear system with genuine nonlinear interactions between the Schr\"{o}dinger and BO-parts so it is not a-priori clear that results for \eqref{BO} can be adapted to \eqref{SBO}. Moreover, we do not know whether \eqref{SBO} is completely integrable. 
Recently, Linares-Mendez-Pilod \cite{LMP} provided the first result on the local well-posedness for eSBO \eqref{SBO} in $H^{s+\frac 12}(\R)\times H^{s}(\R)$ with $s>\frac 54$. Their method is based on the energy method with refined Strichartz estimates al\'{a} \cite{KT} and modified energies.

Our first contribution is to extend the local well-posedness for \eqref{SBO} to the range $s\geq 0$.
  
  \begin{theorem}\label{THM:LWP}
  Let $s\geq 0$ and $\rho \in \R$. Then, the extended SBO system \eqref{SBO} is locally well-posed in $H^{s+\frac 12}(\R)\times H^{s}(\R)$. More precisely, for every $(u_0,v_0)\in H^{s+\frac 12}(\R)\times H^{s}(\R)$, there exists $T=T(\|(u_0,v_0)\|_{H^{s+\frac 12}\times H^{s}})>0$ and a unique distributional solution $(u,v)\in C([0,T]; H^{s+\frac 12}(\R))\times C([0,T];H^{s}(\R))$ to \eqref{SBO} satsifying $J^{s}v\in \wt{L^4_{T,x}}$ and $w:=\dx\P_{+,\textup{hi}}[ e^{i\rho F[v]}]\in Z^{s}_{T}$, where $F$ is defined in \eqref{F} and the space $Z^{s}_{T}$ is defined in \eqref{ZsT}. Moreover, the data-to-solution map 
 $(u_0,v_0) \in H^{s+\frac 12}(\R) \times H^{s}(\R)\mapsto (u,v)\in C([0,T]; H^{s+\frac 12}(\R))\times C([0,T];H^{s}(\R))$ is continuous.
    \end{theorem}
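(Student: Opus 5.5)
The plan is to follow Ionescu--Kenig's $L^2$ theory for BO \eqref{BO} (in the streamlined form of Molinet--Pilod), adapting the Tao gauge transformation to the coupled system. We set $F = F[v]$ to be a spatial primitive of $v$, as in \eqref{F}, adjusted to solve the transport-type equation that $v$ induces. The gauge is chosen so that
\[
W := \P_{+,\textup{hi}}\bigl[e^{i\rho F}\bigr], \qquad w = \dx W,
\]
satisfies a Schr\"odinger-type equation in which the worst high--low interaction $\P_{+,\textup{hi}}(v\,\dx v)$ has been cancelled. A direct computation with the second equation of \eqref{SBO} should give
\begin{align*}
\dt w - i\dx^2 w = -2\rho\,\dx \P_{+,\textup{hi}}\bigl[\dx^{-1}w\,\P_{-}\dx v\bigr] + \rho\,\dx\P_{+,\textup{hi}}\bigl[e^{i\rho F}\,\P_{\textup{lo}}(\cdots)\bigr] + \rho\,\dx\P_{+,\textup{hi}}\bigl[e^{i\rho F}\,|u|^2\bigr],
\end{align*}
up to harmless terms. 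The new feature relative to BO is the last term, coming from $\dx(|u|^2)$. We then recover $v$ by writing
\[
\P_{+,\textup{hi}} v = \tfrac{1}{i\rho}\, e^{-i\rho F} w + (\text{terms with derivatives falling on low frequencies}).
\]
The Schr\"odinger component $u$ will be handled in Bourgain spaces $X^{s+\frac12,\frac12+}$ (or $X^{s+\frac12,\frac12}\cap\ell^2$-based variants), treating $uv$ and $\be|u|^2u$ perturbatively.

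The a priori estimates are organised as follows. We estimate $w$ in $Z^s_T$ (an $X^{s,\frac12}$-type space with an $\ell^1$ Besov-in-modulation component, as in \eqref{ZsT}) and $v$ in $L^\infty_T H^s$ with $J^s v\in \wt{L^4_{T,x}}$. For the $J^sv$ bound we use the reconstruction formula, Kato smoothing, and the $L^4$ Strichartz estimate for the BO group. For the $w$ estimate we rely on the bilinear estimate of Molinet--Pilod for $\dx\P_{+,\textup{hi}}[\dx^{-1}w\,\P_-\dx v]$, which exploits the resonance relation
\[
|\tau - \tau_1 - \tau_2|\ges |\xi_{\max}\xi_{\min}|
\]
for the BO symbol $-\xi|\xi|$ with opposite-sign frequencies. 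This piece is available essentially verbatim. The genuinely new estimates are the following.
\begin{enumerate}[(i)]
\item The term $e^{i\rho F}|u|^2$ in $Z^s$. Here $u\in H^{s+\frac12}$ has half a derivative more than $v$, and the mismatch between the Schr\"odinger dispersion $\xi^2$ and the BO dispersion $\xi|\xi|$ means the modulation of $|u|^2$ at output frequency $\xi$ can be small only in restricted regions. We will split into the regions $|\xi_1|\sim|\xi_2|$ (output low) and high--low, and use bilinear Strichartz/smoothing gains.
\item $uv$ in $X^{s+\frac12,-\frac12+}$, where $v$ is replaced by its reconstruction via $w$ and $e^{-i\rho F}$ and estimated using the $\wt{L^4}$ and $Z^s$ norms rather than $X$-norms of $v$. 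Since $v$ itself is not in a Bourgain space, this is the delicate part.
\end{enumerate}
For both (i) and (ii), the factor $e^{\pm i\rho F}$ is controlled in $L^\infty$ with derivatives $\dx e^{i\rho F} = i\rho v e^{i\rho F}$, exactly as in Ionescu--Kenig.

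Closing the argument proceeds in three steps. First, a smallness/scaling step: \eqref{SBO} is not scale invariant, so we restrict to short times $T$, giving a factor $T^\theta$ in all estimates, and reduce $\|v_0\|_{L^2}$ to be small via dilation combined with the low-frequency cutoff defining $\P_{\textup{hi}}$. Second, a bootstrap/continuity argument for smooth solutions, which exist by \cite{LMP}, yields uniform bounds. Third, we prove a difference estimate. Following Molinet--Pilod, the differences are controlled in $H^{-\frac12}\times$ weaker norms, with the differences of gauges $w_1-w_2$ estimated in $Z^{-1/2}$-type spaces. Continuity of the flow and uniqueness then follow from a Bona--Smith frequency-envelope argument. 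Uniqueness is stated in the class where $J^sv\in\wt{L^4}$ and $w\in Z^s_T$, which is exactly what the difference estimate requires.

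The main obstacle will be the coupling estimates (i)--(ii) at the endpoint $s=0$. In (ii), $v\in L^2$ only, and $u\in H^{1/2}$ must absorb a full derivative loss in $uv$ through smoothing. This is where the half-derivative gap $s+\frac12$ is used. In (i), the gauge factor prevents a clean Fourier-restriction treatment. To handle this, we will expand $e^{i\rho F}$ into $\P_{\textup{lo}}$ and $\P_{\textup{hi}}$ parts, write the high part via $W$ itself, and bound each resulting trilinear expression by the resonance and bilinear smoothing bounds.
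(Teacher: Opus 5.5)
Your overall architecture matches the paper's: the gauge $w=\dx\Pbhip[e^{-i\rho F}]$, $u\in X^{s+\frac12,\frac12+}_{S;T}$, $w\in Z^s_T$, $J^sv\in\wt{L^4_{T,x}}$ via the recovery formula, bilinear Strichartz for $\dx\Pbhip[e^{-i\rho F}|u|^2]$, rescaling to small data, and a priori bounds on the smooth solutions of \cite{LMP}. However, the two steps that carry the endpoint $s=0$ do not work as you state them.

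\textbf{The space for $w$.} $Z^s$ in \eqref{ZsT} is not an $\ell^1$-in-modulation version of $X^{s,\frac12}$. It is the $\ell^2$-dyadic sum of $U^2_SL^2$ norms, with $V^2$-based dual $Y^{-s}$, and this is the key choice. Rewriting $u\P_{-,\textup{HI}}v=u\cj{\PbHIp v}$ via \eqref{recovery2} produces $e^{-i\rho F}u\,\cj{\PbHI w}$. When $u$ sits at frequency $\ll N$ and $w$ at frequency $N$, you must gain exactly $N^{-\frac12}$ (half a derivative, not a full one). Since $e^{-i\rho F}$ has no usable modulation structure, the only source of this gain is the bilinear Strichartz estimate \eqref{bilin1}. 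In $X^{0,b}$, transference needs $b>\frac12$, and there the BO estimate \eqref{BObilin} loses a logarithm at $s=0$. At $b=\frac12$ you only get $N^{-\frac12+\eps}$, and the authors note that $\ell^1$-Besov variants do not seem to fix this. $U^2_S$ gives loss-free transference. The price is that the Molinet--Pilod bilinear estimate is \emph{not} available verbatim: it must be reproved in the $U^2$/$V^2$ framework (Lemma~\ref{LEM:BO1}). That proof uses $\|\Q_{\geq M}h\|_{L^2_{t,x}}\les M^{-\frac12}\|h\|_{V^2_SL^2_x}$, and a Fourier-side argument with $v\in X^{-1,1}_{\textup{BO}}$ when $v$ carries the dominant modulation. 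Two further corrections. First, $v$ \emph{is} in a Bourgain space with derivative loss by \eqref{vXsb}, and this is how $u\PbHIp v$ is handled, since that product is highly non-resonant by \eqref{IIphase}. Second, $R(v,w)$ contains high$\times$high terms, and its exact $\frac12$-gain in $\wt{L^4_{T,x}}$ (Lemma~\ref{LEM:s12R}) is a genuine input, not a matter of derivatives landing on low frequencies.

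\textbf{The difference estimate.} Every difference of gauged quantities contains $G=e^{i\rho F_1}-e^{i\rho F_2}$. This includes the $w$-equation, the recovery formula, and $R(v_1,w_1)-R(v_2,w_2)$. $G$ must be bounded in $L^\infty_{T,x}$, i.e.\ you need $\|F_1-F_2\|_{L^\infty_{T,x}}$. The high-frequency part $\|\PbHI(F_1-F_2)\|_{L^\infty}$ is bounded only by $\sum_N N^{-\frac12}\|\P_N(v_1-v_2)\|_{L^2}$, which $\|v_1-v_2\|_{H^{-\frac12}}$ does not control. The low-frequency part of the primitive is not controlled even by $\|v_1-v_2\|_{L^2}$. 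So an $H^{-\frac12}$ difference scheme with Bona--Smith, as sketched, has no mechanism to close.

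The paper instead uses the low-frequency device of \cite[Section 4B]{MP}. It restricts to data with $\PbLO v_1(0)=\PbLO v_2(0)$, for which Lemma~\ref{LEM:Fdiffs} gives $\|F_1-F_2\|_{L^\infty_{T,x}}\les\|v_1(0)-v_2(0)\|_{L^2}+O(T)$. It then proves the Lipschitz bound \eqref{LipbdT} at the \emph{same} regularity $\s\in\{0,s\}$. Solutions are built from frequency-truncated data, which eventually share the low frequencies of $v_0$, and uniqueness and continuity follow from that bound.
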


The range $s\geq 0$ in Theorem~\ref{THM:LWP} appears to be optimal in view of our method, as discussed below. 
 In particular, our result covers the energy space at $s=\frac 12$, for which we can exploit the conservation laws \eqref{energy} to obtain a priori bounds on solutions. We thus obtain the first global well-posedness results for the eSBO system \eqref{SBO}.
  
\begin{theorem} \label{THM:GWP}
Let $\rho,\be \in \R$ and for $r>0$, let $B_{r}=\{u_0 \, :\, \|u_0\|_{L^{2}}\leq r\}$.
There exists $R>0$ such that the extended SBO system \eqref{SBO} is globally well-posed in $(H^{1}(\R)\cap B_{R}) \times H^{\frac 12}(\R)$.
\end{theorem}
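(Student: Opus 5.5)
We combine the local theory of Theorem~\ref{THM:LWP} at $s=\frac12$ with a priori bounds from the conservation laws \eqref{energy}. Since the local existence time $T$ depends only on $\|(u_0,v_0)\|_{H^1\times H^{1/2}}$, it suffices to show that for data in $(H^1\cap B_R)\times H^{1/2}$ the norm $\|(u(t),v(t))\|_{H^1\times H^{1/2}}$ stays bounded, uniformly in $t$, by a constant depending only on the initial data. Iterating the local result with a fixed step then gives global existence, and uniqueness and continuous dependence on each compact time interval follow from the local statement. The conservation of $E_1$, $E_2$, $H$ along $H^1\times H^{1/2}$ solutions follows by approximating with smooth solutions and using the continuity of the data-to-solution map in Theorem~\ref{THM:LWP}.

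\textbf{Step 1: $L^2$ control.} Mass conservation gives $\|u(t)\|_{L^2}=\|u_0\|_{L^2}\le R$. From $E_2$,
\[
\tfrac12\|v(t)\|_{L^2}^2 \le |E_2(u_0,v_0)| + \|u(t)\|_{L^2}\|\dx u(t)\|_{L^2} \le C_0 + R\,\|\dx u(t)\|_{L^2}.
\]
So $\|v\|_{L^2}^2$ is controlled linearly by $\|\dx u\|_{L^2}$, with a small coefficient when $R$ is small.

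\textbf{Step 2: energy control.} Write $A=\|\dx u\|_{L^2}^2$ and $B=\||\dx|^{1/2}v\|_{L^2}^2$. We estimate the non-coercive terms of $H$ with Gagliardo--Nirenberg:
\begin{itemize}
\item $\int v^3\les \|v\|_{L^3}^3\les \|v\|_{L^2}^2\,\||\dx|^{1/2}v\|_{L^2}$, using $\dot H^{1/6}\subset L^3$ and interpolation;
\item $\int v|u|^2\le \|v\|_{L^2}\|u\|_{L^4}^2\les \|v\|_{L^2}\|u\|_{L^2}^{3/2}\|\dx u\|_{L^2}^{1/2}$;
\item $\int|u|^4\les \|u\|_{L^2}^3\|\dx u\|_{L^2}$.
\end{itemize}
Substituting Step 1, $\|v\|_{L^2}^2\le C_0+RA^{1/2}$, these give
\[
H\ge \tfrac12 B + A - C\big(C_0+RA^{1/2}\big)B^{1/2} - C\big(C_0+RA^{1/2}\big)^{1/2}R^{3/2}A^{1/4} - CR^3A^{1/2}.
\]
The main obstacle is the cubic term: it produces $RA^{1/2}B^{1/2}$, which has the same homogeneity as the coercive part $\frac12 B+A$. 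By Young's inequality, $CRA^{1/2}B^{1/2}\le \frac14 B + C^2R^2A$, and this term can be absorbed exactly when $C^2R^2<\frac12$. This is where the smallness $\|u_0\|_{L^2}\le R$ is essential and where $R$ is fixed, independently of $\rho,\be$ except through the constants. The remaining terms are sublinear in $A$ and $B$ (such as $C_0B^{1/2}$ and $A^{1/4}$), so Young's inequality absorbs them too. We conclude that $A+B\les 1+|H(u_0,v_0)|+C_0^2$.

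\textbf{Step 3: conclusion.} Returning to Step 1 bounds $\|v\|_{L^2}$, so $\|u(t)\|_{H^1}+\|v(t)\|_{H^{1/2}}\le K(u_0,v_0)$ for all $t$ in the existence interval. We then iterate Theorem~\ref{THM:LWP} with the uniform step $T(K)$. Negative times are handled the same way, using the time-reversal symmetry of the system.
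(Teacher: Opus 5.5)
Your proposal is correct and follows essentially the same route as the paper: conservation of mass and $E_2$ gives $\tfrac12\|v(t)\|_{L^2}^2\le \|u_0\|_{L^2}\|\dx u(t)\|_{L^2}+C$. The energy $H$ is then made coercive via Gagliardo--Nirenberg and Young, and the smallness of $\|u_0\|_{L^2}$ is used exactly to absorb the cubic term $\int v^3$, which has the same homogeneity as $\tfrac12\||\dx|^{1/2}v\|_{L^2}^2+\|\dx u\|_{L^2}^2$. Your splitting $CRA^{1/2}B^{1/2}\le \tfrac14 B+C^2R^2A$ is a cleaner version of the paper's bookkeeping, and you also make explicit the steps the paper leaves implicit: conservation for $H^1\times H^{1/2}$ solutions by approximation, iteration with a uniform time step, and negative times via the symmetry $(u,v)(t,x)\mapsto(\overline{u(-t,-x)},v(-t,-x))$, where the reflection in $x$ is needed because of the Hilbert transform.
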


It is unclear to us if the restriction to small $L^2-$norm for the Schr\"{o}dinger part can be relaxed.
Ultimately, this stems from the additional cubic term $\frac{\rho}{3}\int v^3 dx$ in $H(u,v)$, which presents a kind of criticality in the exponents as the $L^2$-norm of $v$ is not conserved. 
Instead, we have to combine the conservation of $E_1(u)$ and $E_{2}(u,v)$ to adequately control this cubic term. See \eqref{Energ1}. Notice that no such term arises for the SBO system \eqref{SBo}, and thus no small data restriction is imposed in that setting.

If we take the initial datum for the Schr\"{o}dinger part $u_0 \equiv 0$, and assume that $v_0 \in L^2(\R)$, then Theorem~\ref{THM:GWP} and the conservation of $E_{1}$ implies that $u(t)\equiv 0$ for all $t\in \R$, whence \eqref{SBO} reduces completely to the BO equation \eqref{BO} (modulo the factor of $-\rho$). This observation allows us to immediately transport the known ill-posedness result due to Biagoni-Linares \cite{BL} for BO to the system \eqref{SBO}. The ill-posedness here is that when $s<-\frac 12$, an extension of the solution map $\Phi: H^{s+\frac 12}\times H^{s}\to C([-T,T];H^{s+\frac 12})\times C([-T,T];H^{s})$, sending initial data $(u_0,v_0)$ to solutions $(u,v)=\Phi(u_0,v_0)$, must be discontinuous at $(u_0,v_0)=(0,-2\pi \dl_0)$ for any $T>0$.

\begin{theorem}\label{THM:ILL}
Let $s<-\frac 12$. Then, the eSBO equation \eqref{SBO} is ill-posed in $H^{s+\frac 12}(\R)\times H^{s}(\R)$. 
\end{theorem}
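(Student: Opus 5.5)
We reduce the ill-posedness of the system to the known ill-posedness of the Benjamin-Ono equation \eqref{BO}, as in the remark preceding Theorem~\ref{THM:ILL}. Assume, for contradiction, that for some $s<-\frac12$ and $T>0$ the solution map $\Phi$ of \eqref{SBO}, defined on smooth data by Theorem~\ref{THM:GWP}, extends continuously from $H^{s+\frac12}\times H^{s}$ to $C([-T,T];H^{s+\frac12})\times C([-T,T];H^s)$ at the point $(0,-2\pi\dl_0)$. Restrict to data of the form $(0,v_0)$ with $v_0\in H^\infty(\R)$; these lie in $(H^1\cap B_R)\times H^{\frac12}$. By Theorem~\ref{THM:GWP} the corresponding solution is global, and conservation of $E_1$ forces $u\equiv 0$ on $[-T,T]$. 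Hence $v$ solves $\dt v-\H\dx^2 v=-\rho\dx(v^2)$. If $\rho=0$ this is linear; we treat that case separately below. If $\rho\neq0$, set $\wt v=-\rho v$; then $\wt v$ solves \eqref{BO} exactly.

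For $\rho\neq 0$, the continuity of $\Phi$ at $(0,-2\pi\dl_0)$, restricted to the subspace $\{0\}\times H^s$ and composed with the (continuous) projection onto the second component and the scaling $v\mapsto-\rho v$, yields a continuous extension of the BO flow map at $2\pi\rho\dl_0$ in $H^s$. We must check that the Biagioni--Linares construction \cite{BL} applies at this point. Their argument uses the explicit soliton family (or a self-similar/limiting profile) converging to a multiple of the Dirac mass. Using the scaling $v_\ld(t,x)=\ld v(\ld^2t,\ld x)$ of BO, the discontinuity can be transported between different multiples of $\dl_0$ with the appropriate sign. So the multiple $-2\pi\rho\cdot(-1)$ of $\dl_0$ is not the main issue, provided we follow the paper's normalization. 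Alternatively, we pick $v_0$ so that $-\rho v_0$ equals the BO datum used in \cite{BL}, whose limit is the stated multiple of $\dl_0$ when $\rho=1$; this is presumably why the paper states the point $(0,-2\pi\dl_0)$. We then obtain a contradiction with \cite{BL}.

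The main obstacle is verifying that the hypotheses of \cite{BL} are met. Their statement concerns the nonexistence of a continuous extension of the BO flow (for smooth data) in $H^s$, $s<-\frac12$, at a specific datum. We need our restricted map to coincide with the BO flow on smooth data. This relies on uniqueness in Theorem~\ref{THM:LWP} and on $u\equiv0$, which holds by uniqueness since $(0,\text{BO solution})$ is a solution. We also need the sequence of smooth data approximating the Dirac mass to have $u$-component $0$, which trivially lies in $B_R$.

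For $\rho=0$ the $v$-equation is linear. In that case we would instead cite ill-posedness for the SBO system directly. We note, however, that the theorem as motivated presupposes $\rho\neq0$ (the quasilinear case), and we would state this assumption explicitly.
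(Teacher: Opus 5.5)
Your reduction is the paper's own: take $u_0\equiv 0$, get $u\equiv 0$ (from conservation of $E_1$, or from uniqueness as you note), rescale $\wt v=-\rho v$ to land on \eqref{BO}, and invoke Biagioni--Linares. One step is wrong, though. You claim the BO scaling $v_\ld(t,x)=\ld v(\ld^2t,\ld x)$ can move the discontinuity between different multiples of $\dl_0$. It cannot: for $\ld>0$ one has $\ld\,\dl_0(\ld x)=\dl_0(x)$, so every multiple of $\dl_0$ is fixed by the scaling. And $\ld<0$ is not a symmetry, because $\H$ changes sign under reflection.

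This is not a technicality. The Biagioni--Linares mechanism uses the solitons $Q_c(x-ct)$. For \eqref{BO} in the paper's normalization these are $Q_c(x)=-\frac{2c}{1+c^2x^2}=cQ_1(cx)$. They converge in $H^s$, $s<-\frac12$, to $(\int Q_1)\,\dl_0=-2\pi\dl_0$. For each fixed $t\neq 0$, the translates $Q_c(\cdot-ct)$ converge weakly to $0$ while their $H^s$-norms stay bounded below, so they have no strong limit. The location of the discontinuity is therefore pinned to the soliton mass, which is scale-invariant and has a definite sign.

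So drop the scaling argument and use your alternative route. The eSBO solutions $(0,-\rho^{-1}Q_c(x-ct))$ show that any continuous extension of the flow fails at $(0,\frac{2\pi}{\rho}\dl_0)$. This agrees with the point $(0,-2\pi\dl_0)$ stated in the paper only when $\rho=-1$, not $\rho=1$ as you guessed; $\rho=-1$ is the case in which \eqref{SBO} with $u\equiv0$ is literally \eqref{BO}. Since the theorem only asserts ill-posedness, this corrected point suffices.

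Your caveat about $\rho=0$ is legitimate. The paper's reduction also tacitly requires $\rho\neq 0$, and for $\rho=0$ neither argument proves the statement.
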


It remains an interesting problem to determine the well-posedness for the system \eqref{SBO} in the remaining range $-\frac{1}{2}\leq s<0$. As mentioned above, the only work that can cover the range $-\frac 12<s<0$ for BO \eqref{BO} is \cite{KLV} that relies on the complete integrability and thus is not applicable to \eqref{SBO}. On the circle, the situation for BO \eqref{BO} is also now completely resolved with well-posedness for $s>-\frac 12$ and ill-posedness for $s\leq -\frac 12$. See \cite{GKT-2} and the references therein. 

We now discuss our method of proof for Theorem~\ref{THM:LWP}. 
 To weaken the bad high$\times$low$\to$high interactions in term $\dx(v^2)$, we use a gauge transformation for the Benjamin-Ono part $v$, which is motivated by the theory of the BO equation by Tao \cite{TAO04} and further analysed in \cite{BP, MP}. 
 This gauged variable, which we call $w=w[v]$, satisfies \eqref{weqn} which is similar to the gauged equation for BO but with the extra term 
 $\dx \Pbhip[ e^{-i\rho F}|u|^2].$ Whilst $u$ has spatial regularity $s+\frac 12$, the  low$\times$low$\times$high$\to$ high interactions in this term cannot be controlled without additional smoothing. In this case, we gain the extra $\frac 12$-degree of smoothing by applying to the product $u\cj{u}$ the bilinear Strichartz estimate due to Bourgain \cite{BO98} and Ozawa-Tsutsumi \cite{OT}; see \eqref{bilin0}. Notice that it is then natural to posit that $u\in X^{s+\frac 12,b}_{T}$, for some $b>\frac 12$, where $X^{s,b}_{T}$ is the Fourier restriction norm space \cite{BO93}, which is defined in \eqref{Xsb}.
 
  Next, we consider the Schr\"{o}dinger part for which the main nonlinear term is the product $uv$. This product needs to be estimated in $X^{s+\frac 12, b-1}_{T}$ but only assuming that $v\in X^{s-\ta,\ta}_{T}$ for $0\leq \ta\leq 1$; namely, with a loss of spatial derivatives. The main issue is the low$\times$high interactions, as we need to observe $\frac 12$-degree of smoothing to control this product at regularity $s+\frac 12$.
 It turns out that the product $u \P_{+}v$ is highly non-resonant and thus a bilinear estimate can be established; see Lemma~\ref{LEM:SchrodingerII}. Here, by highly non-resonant we mean that the phase function is the square of the maximum frequency; see \eqref{IIphase}. In comparison, the phase function for the product $u \P_{-}v$ is worse behaved being the minimum times the maximum frequency, which is insufficient.
However, since $v$ is real-valued, $\P_{-}v= \cj{\P_{+}v}$ and we may rewrite this using in \eqref{recovery2} in terms of the gauged variable $w$ at the cost of a (smoother) commutator term $R(v,w)$:
  \begin{align*}
u\P_{-}v ``=" e^{-i\rho F} u\cj{w} +  u \cj{R(v,w)},
\end{align*}
  where we neglected some inessential additional frequency projections and constant factors.
  It turns out that, after some careful frequency decompositions, the commutator term enjoys \textit{exactly} an additional $\frac 12$-degree of regularity as compared to $w$ (measured in $\wt{L^{4}_{T,x}}$, see Lemma~\ref{LEM:s12R}) which allows us to control the low$\times$high$\to$high interactions. 
  
  It still remains to control the first term $e^{-i\rho F} u\cj{w}$ in $X^{s+\frac 12,b-1}_{T}$. As we cannot obtain favourable Fourier restriction norm properties for the exponential term $e^{-i\rho F}$ all the way down to $s=0$, we do not know how to exploit mulitlinear dispersion to gain additional smoothing for the low$\times$low$\times$high$\to$high interactions here. Again, we rely on the bilinear Strichartz estimate to absorb \textit{exactly} $\frac 12$ of the derivative. Unfortunately, the transference property for Fourier restriction norm spaces would require us to put $w\in X^{s,b}_{T}$ for $b>\frac 12$. When $s>0$ and $b>\frac 12$, we can prove the main bilinear estimate for the BO part 
  \begin{align}
\dx \Pbhi[ (\dx^{-1}w) \P_{-}\dx v]. \label{BObilin}
\end{align} 
However, due to a $\log$-loss when $s=0$, we would miss the endpoint space $s=0$. If we instead force $b=\frac 12$ for $w$, then we only gain $(\frac{1}{2}-\eps)$-smoothing, for any $\eps>0$, from the bilinear Strichartz estimate which is insufficient.
We point that this issue does not seem to be resolved by taking the $\l^1$-based Besov variants of $X^{s,b}$, as done in \cite{MP2}, or by considering the spaces used in \cite{MP}. 

Our desire to reach the  endpoint space $s=0$ led us to consider the adapted function spaces based on the framework of the atomic function spaces $U^2$ and their pre-dual $V^2$ as a replacement for $X^{s,\frac 12}$. These spaces were first used in \cite{U2V21, U2V22, U2V23} and have proven very effective for scaling critical problems. This allows us to obtain a transference for the bilinear Strichartz estimate without paying an $\eps$-derivative loss. The price to pay here is that we need to prove an analog of the bilinear estimate for \eqref{BObilin} in \cite[Proposition 3.5]{MP} but in the $U^2-V^2$-framework; see Lemma~\ref{LEM:BO1}.


\begin{remark}\rm
We point out that our argument still works for the local well-posedness for any $\nu \neq 0$ of the system:
\begin{equation}
\left\{
\begin{aligned}
  & i\dt u+\dx^2 u = uv+\be |u|^2 u,\\
  & \dt v-\nu \H \dx^2 v= \dx(|u|^2-\rho v^2),
\end{aligned}
 \right. \label{eSBOnu}
\end{equation}
The point is that we only use the bilinear strichartz estimate in the case of one input much larger than the other and here the bilinear estimate still holds with the two dispersion relations $-i\dx^2$ and $\pm i\nu \dx^2$. The only part that fails as written is \eqref{IIphase} when $\nu=-1$ as we only gain $N_{\max}N_{\min}$ rather than $N_{\max}^{2}$. However, this is easily rectified by replacing $\PbHIp v$ in the product  $u \PbHIp v$ by \eqref{recovery2} as we already did in \eqref{ueq}.

More generally, Theorem~\ref{THM:LWP} continues to hold for suitable dispersive perturbations of eSBO \eqref{SBO}, in the spirit of the more general system \eqref{gSBO}. More precisely, as long as the operator $P(\dx)-\H \dx^2$ is $L^2\to L^2$ bounded, viewing \eqref{gSBO} as a perturbation of \eqref{SBO} entails an additional harmless linear term in the BO part in \eqref{SBO}.
For example, Theorem~\ref{THM:LWP} also holds for the system:
\begin{equation}
\left\{
\begin{aligned}
  & i\dt u+\dx^2 u = uv+\be |u|^2 u,\\
  & \dt v-\mathcal{G}_{\dl} \dx^2 v= \dx(|u|^2-\rho v^2),
\end{aligned}
 \right. \label{SILW}
\end{equation}
where, for $\dl>0$, $\GG_{\dl}=\mathcal{T}_{\dl}-\dl^{-1}\dx^{-1}$, with $\TT_{\dl}$ a Fourier multiplier operator with multiplier $-i\coth(\dl \xi)$.
 Formally, \eqref{SILW} represents a coupling of a short wave with a long wave of ``finite depth" $\dl$, which is a system version of the intermediate long wave equation (ILW) $\dt v-\GG_{\dl}\dx^2 v= \dx(v^2)$. Recently, the ILW equation has attracted a lot of attention as it is a physically relevant model, which interpolates between BO \eqref{BO} and the KdV equation in the limits $\dl\to \infty$ and $\dl\to 0$, respectively. See \cite{IS, CLOP, GL, CLOZ} and references therein.
\end{remark}


This article is structured as follows. In Section~\ref{SEC:Prelim}, we introduce notation and prove product estimates. As we need to cover the gap $0\leq s \leq \frac{5}{4}$, the estimates become aesthetically more complicated as $s$ increases. In Section~\ref{SEC:main}, we first recall the Fourier restriction norm spaces and adapted spaces based on the $U^2$ and $V^2$-spaces. 
We then apply a gauge transformation to find the equations for the new gauged variable and study some properties of the original solution $v$ as well as the commutator term $R(v,w)$. The main multilinear estimates then appear in Section~\ref{SEC:tri}. Finally, in Section~\ref{SEC:LWP} we prove Theorems \ref{THM:LWP} and \ref{THM:GWP}.

\section{Preliminaries}\label{SEC:Prelim}

\subsection{Notation}

In this subsection, we introduce relevant notation, projections, and function spaces, which will be used throughout.

We use $A\les B$ to denote $A\leq C B$ for some constant $C>0$,
$A\ll B$ if there is a small $c>0$ such that $A\le cB$, and $A\sim B$ if both $A\les B$ and $B\les A$ hold. 
The notation $a\pm$ refers to $a\pm\eps$ for any $\eps>0$. 
For $a,b\in \R$, $a\land b$ and $a\lor b$ denote the minimum and the maximum between $a$ and $b$, respectively. We define $\Z_{\geq 0} =\N\cup\{0\}$.

Given a function $f$ on $\R$, we use $\F f$ and $\ft f$ to denote its Fourier transform
\begin{align}
\ft f(\xi) = \frac{1}{\sqrt{2\pi}} \int_\R f(x) e^{-i\xi x} dx. 
    \notag
\end{align}
For space-time functions $u:\R\times\R \to \C$, we may use the notation $\F_t u$ and $\F_x u$ to indicate the Fourier transform with respect to the time and space variables. We omit this indexing, when clear from context.

Let $s\in \R$ and $1 \le p \le \infty$. We define the $L^p$-based Sobolev spaces $W^{s, p}(\R)$ by the norm:
\begin{align*}
    \| f\|_{W^{s, p}} 
    = \| J^s f \|_{L^p}
    = \big\| \Ft^{-1} \big( \jb{\xi}^s \ft f(\xi) \big) \big\|_{L^p}, 
\end{align*}
where $J^s$ denotes the Bessel potential with Fourier multiplier $\jb{\xi}^s$, where $\jb{x} = (1+|x|^2)$ and $\Ft^{-1}$ stands for the inverse Fourier transform. 
We also use $\dot{W}^{s,p}(\R)$ for the homogeneous Sobolev spaces with norm
\begin{align*}
\| f\|_{\dot{W}^{s, p}} 
    = \| D^s f \|_{L^p}
    = \big\| \Ft^{-1} \big( |\xi|^s \ft f(\xi) \big) \big\|_{L^p}
    ,
\end{align*}
where $D^s$ is the Riesz potential, with Fourier multiplier $|\xi|^s$.
When $p=2$, we write $W^{s,2}(\R) = H^s(\R)$ for the $L^2$-based Sobolev spaces, with norm $    \| f \|_{H^s} = \| \jb{\xi}^s \ft f (\xi) \|_{L^2_\xi}. $
We define the inner product on $L^2(\R)$ by $\jb{f,g} = \int_{\R} \cj{f} g dx, $
and dyadic $L^p$-spaces, for $1<p<\infty$, via the norm
\begin{align}
\| f\|_{ \wt{L^p_{t,x}}} = \big\|  \| \P_{N}f\|_{L^p_{t,x}} \big\|_{\l^{2}_{N}},
\notag
\end{align}
where $N\in 2^{\Z_{\geq 0}}$.
By the Littlewood-Paley square function theorem and Minkowski's inequality, it holds that 
\begin{align}\label{Lpwt}
\|f\|_{L^{p}_{t,x}} \les \|f\|_{\wt{L^{p}_{t,x}}}
\end{align}
for any $2\leq p<\infty$.
When working with space-time functions, given $T>0$, we often use the shorthand notation $L^p_T W^{s, q}_x$ for $L^p([0,T); W^{s,q}(\R))$ and $L^p_T L^q_x$ for $L^p([0,T); L^q(\R))$.

We now introduce notation to perform Litlewood-Paley decompositions. 
Let $\eta:\R\to [0,1]$ be a smooth function supported on $[-2,2]$ and equal to $1$ on $[-1,1]$.
Given $N\in 2^{\Z_{\geq 0}}$, let $\eta_{N}(\xi)=\eta(\frac{\xi}{N})$ and $\psi_N(\xi)=\eta(\frac{\xi}{N})-\eta(\frac{2\xi}{N})$. 
Note that
\begin{align*}
\sum_{N\geq 1} \psi_{N}(\xi) = 1- \eta_{\frac 12}(\xi)
\quad \text{when } \xi \in \R\setminus\{0\}
.
\end{align*}
Moreover, we use $\P_{\le N}$ and $\P_N$ to denote
 the Littlewood-Paley projectors defined by
 \begin{align*}
  \F \,(\P_{\leq N} f)  &= \eta_{N} \ft f 
, 
\\
\F(\P_1 f ) 
&= 
\eta_1  \ft f 
\quad
\text{and}
\quad
\F\, \P_N f 
= \F \,\P_{\leq N} f -\F\, 
\P_{\leq \frac{N}{2}} f 
=\psi_N \ft f, \quad \text{when } N\ge 2 ,
 \end{align*}
 and $\P_{>N}:= 1-\P_{\leq N}$. Note that $\sum_{N\ge 1} \P_N f = f ,  $
which we will often use in our estimates, where by abuse of notation, we assume to sum over dyadic numbers in $2^{\Z_{\ge0}}$. 
 We use $\wt\P_{N}$ for the wider projector with multiplier $\wt{\psi}_{N}(\xi) =\psi_{N}(\frac{\xi}{2}) + \psi_{N}(\xi) +\psi_{N}(2\xi)$.
 We then set
 \begin{equation}
\begin{alignedat}{3}
 \F\,( \P_{+} f)(\xi) &= \ind_{\{\xi> 0\}} \ft f(\xi), 
 &
 \qquad   \F\, ( \P_{-} f)(\xi)  &= \ind_{\{\xi< 0\}} \ft f(\xi),
 \\
\Pbhi & = \sum_{N\geq 2}\P_{N},  
&
\quad 
\PbHI & = \sum_{N\geq 8}\P_N, 
\\
\Pblo &= \text{Id}-\Pbhi, 
& \quad 
\PbLO &=\text{Id} - \PbHI.
\end{alignedat}
\label{projs}
\end{equation}
We also define the shorthand $\P_{\pm, \text{hi}}=\P_{\pm}\Pbhi$ and similarly for $\P_\HI, \P_\lo, \P_\LO$.

For space-time functions $u: \R\times \R \to \C$, we define frequency projectors on the space-time Fourier variables $(\tau, \xi)$: given $K\in 2^{\Z_{\geq 0}}$, we define $\Q_{K}$ and $\Q_{\leq K}$ via
\begin{align}
\label{Qpro}
\mathcal{F}_{t,x}\{ \Q_{\leq K}u\}(\tau,\xi) = \eta_{K}(\tau +\xi^2)\ft u(\tau,\xi) \quad \text{and} \quad  \mathcal{F}_{t,x}\{ \Q_{ K}u\}(\tau,\xi) =\psi_{K}(\tau +\xi^2)\ft u(\tau,\xi).
\end{align}
We also define $\Q_{> K} : = \Id- \Q_{\leq K}$. 


\subsection{The primitive}

We adapt the gauge transform, originally due to Tao~\cite{TAO04}, and as presented by Burq-Planchon \cite{BP} and Molinet-Pilod~\cite{MP}
The aim is to first construct a spatial primitive $F=F[u,v]$ of the smooth solution $v$ to the BO part of the system \eqref{SBO} so that $\dx F= v$ and $F$ satisfies:
\begin{align}
\dt F -\H\dx^2 F  +\rho (\dx F)^2=|u|^2.
\label{Feqn}
\end{align}
Let $\psi \in C_{0}^{\infty}(\R)$ be such that $\int_{\R} \psi(x)dx =1$. Define
\begin{align}
F(t,x) =\int_{\R} \psi(y) \int_{y}^{x} v(t,z)dz dy + G(t) \label{F}
\end{align}
for some function $G$ to be chosen later. Then, using \eqref{SBO}, we have
\begin{align*}
\dt F(t,x)  & = \int_{\R} \psi(y) \int_{y}^{x} \dt v(t,z)dz dy + G'(t) \\
& =   \int_{\R} \psi(y) \int_{y}^{x} \Big[  \H\partial_{z}^2 v(t,z) -\rho \partial_{z} (v(t,z)^2) +\partial_{z}( |u(t,z)|^2)   \Big]dz dy + G'(t) \\
& =  \H \dx v(t,x) -\rho v(t,x)^2 + |u(t,x)|^2  \\
& \hphantom{XX}- \int_{\R} [\H \psi'(y)  v(t,y) -\psi(y) \rho v(t,y)^2 + \psi(y)|u(t,y)|^2] dy +G'(t).
\end{align*}
Choosing $G$ so that 
\begin{align*}
G(t) = \int_{0}^{t}  \int_{\R}  [\H \psi'(y)  v(t',y) -\psi(y) \rho v(t',y)^2 + \psi(y)|u(t',y)|^2] dy dt' 
\end{align*}
then verifies \eqref{Feqn}. Note that $G(0)=0$ and is real-valued.

We then consider the function $e^{-i\rho  F}$, which satisfies $|e^{-i\rho  F}|=1$ since $F$ is real-valued.
It follows that $e^{-i\rho  F}\in L^{\infty}(\R)\setminus L^2(\R)$. Consequently, $e^{-i\rho F}$ is merely a tempered distribution. Nonetheless, as $v\in L^2(\R)$, $\dx e^{-i\rho F}\in L^2(\R)$, and we have that for almost every $\xi\in\R$, 
\begin{align*}
\mathcal{F}_x\{e^{-i\rho F}\}(\xi) = \tfrac{1}{i\xi} \mathcal{F}_{x}\{ \dx e^{-i \rho F}\}(\xi).
\end{align*}
Whilst $\Pbhi e^{i\rho F}$, $\PbHI e^{i\rho F}$, and $\Pbhip e^{i\rho F}$ are well-defined and belong to $L^2(\R)$, due to the non-integrable singularity at the origin, the quantities $\P_{\pm}(e^{i\rho F})$, $\Pblo e^{i\rho F}$ and $\PbLO e^{i\rho F}$ are ill-defined. Here, we understand the latter two via the differences
\begin{align}
\Pblo e^{i\rho F} : = e^{i\rho F} -\Pbhi e^{i\rho F} \quad \text{and} \quad \PbLO e^{i\rho F} : = e^{i\rho F} -\PbHI e^{i\rho F} .\label{PbloE}
\end{align}
It follows that
\begin{align}
\PbHI \Pblo(e^{i\rho F}) = \PbHI e^{i\rho F} - \PbHI e^{i\rho F} = 0
\quad \text{and} \quad 
\dx \Pblo(e^{i\rho F}) = \Pblo \dx e^{i\rho F}, \label{PloeiF}
\end{align}  where the $\Pblo$ appearing on the right-hand side of \eqref{PloeiF} agrees with a Littlewood-Payley projection to frequencies $\{|\xi|\les 1\}$. Due to the definition \eqref{PbloE}, in the following we need to carefully handle estimates for terms including factors of the form $\Pblo e^{-i\rho F}$ or $\PbLO e^{-i\rho F}$.

\subsection{Product estimates}

We will need a product estimate involving products of functions with the exponential factors $e^{i\rho  F}$. First, we recall the fractional Leibniz rule \cite{CW,GO,BL}.

\begin{lemma}[Fractional Leibniz rule]
\label{LEM:leib}
Let $s\ge0$ and $1<p_j,q_j,r\leq \infty$, $j=1,2$,
such that $\frac1r=\frac{1}{p_j}+\frac{1}{q_j}$. 
Then, we have
\begin{align*}
  \| J^s(fg)\|_{L^r (\R)} \les \|J^s f\|_{L^{p_1}(\R)} \|g\|_{L^{q_1}(\R)}  + \|f\|_{L^{p_2}(\R)} \| J^s  g\|_{L^{q_2}(\R)}.
\end{align*}
\end{lemma}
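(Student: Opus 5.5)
This is the classical Kato--Ponce / Coifman--Meyer fractional Leibniz rule, and I would prove it by paraproduct decomposition. Write $fg=\sum_{N,M}\P_N f\,\P_M g$ and split it into three pieces:
\begin{align*}
\pi_1(f,g)&=\sum_N \P_N f\,\P_{\le N/8}g,\\
\pi_2(f,g)&=\sum_M \P_{\le M/8}f\,\P_M g,\\
\pi_3(f,g)&=\sum_{N\sim M}\P_N f\,\P_M g,
\end{align*}
that is, high$\times$low, low$\times$high and high$\times$high. The estimate for $\pi_1$ will come out with $\|J^sf\|_{L^{p_1}}\|g\|_{L^{q_1}}$, $\pi_2$ is symmetric and gives the other term, and $\pi_3$ can be bounded by either term. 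I will treat the case $r<\infty$ first and comment on the endpoint exponents at the end.

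For $\pi_1$, each summand $\P_Nf\,\P_{\le N/8}g$ has Fourier support in the annulus $|\xi|\sim N$. Hence
\begin{align*}
J^s\pi_1=\sum_N \wt\P_N J^s\big(\P_Nf\,\P_{\le N/8}g\big),
\end{align*}
and on that annulus the multiplier $\jb{\xi}^s$ acts like $N^s$. I would write $J^s\wt\P_N=N^s m_N(D)\wt\P_N$ with $m_N$ uniformly a Mikhlin symbol, or equivalently move $J^s$ onto $\P_Nf$ as $N^s\P_Nf=\wt{m}_N(D)\P_NJ^sf$. Then apply the Littlewood--Paley square function estimate for $1<r<\infty$ together with $\sup_N|\P_{\le N}g|\les Mg$, where $M$ is the maximal function:
\begin{align*}
\|J^s\pi_1\|_{L^r}\les \Big\|\Big(\sum_N|\P_NJ^sf|^2\Big)^{1/2}Mg\Big\|_{L^r}.
\end{align*}
Hölder's inequality and the maximal and square function theorems then give $\les\|J^sf\|_{L^{p_1}}\|g\|_{L^{q_1}}$. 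The cleaner rigorous route is to view $\pi_1$ as a Coifman--Meyer bilinear multiplier applied to $(J^sf,g)$ with symbol $\sum_N\psi_N(\xi)\eta_{N/8}(\eta)\jb{\xi+\eta}^s\jb{\xi}^{-s}$. This symbol satisfies the Coifman--Meyer derivative bounds, so the Coifman--Meyer theorem gives $L^{p_1}\times L^{q_1}\to L^r$ boundedness for $1<p_1,q_1\le\infty$ and $r<\infty$.

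For $\pi_3$ the output frequency is only $\les N$, so no annulus structure is available. Instead I would use $\jb{\xi}^s\les \jb{N}^s$ with $s\ge0$ to shift all the weight onto one factor. The resulting symbol $\jb{\xi+\eta}^s\jb{\xi}^{-s}\sum_{N\sim M}\psi_N(\xi)\psi_M(\eta)$ is again Coifman--Meyer, and the sign condition $s\ge0$ is used exactly here.

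I expect the main obstacle to be the endpoint exponents, namely $r=\infty$, or one of the $p_j,q_j=\infty$ together with $r\le1$; the lemma allows $r$ down to $1/2$. The Coifman--Meyer theorem covers $1/2<r<\infty$ with $L^\infty$ inputs allowed, but $r=\infty$ requires the refined argument of Grafakos--Oh and Bourgain--Li. For $r=\infty$ one uses that $\sum_N\|\P_N J^s(\cdot)\|_{L^\infty}$-type sums converge thanks to the extra decay in the symbol, as handled in \cite{BL} (Bourgain--Li). Since the paper cites \cite{CW,GO,BL}, I would invoke those results for the endpoint cases rather than reprove them, and give the paraproduct argument above for the main range, which is all that the later estimates need.
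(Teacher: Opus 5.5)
The paper gives no proof of this lemma. It is quoted from Christ--Weinstein, Grafakos--Oh and Bourgain--Li, so your plan is reasonable: prove the range $1<r<\infty$ by paraproducts and cite the endpoint. Your treatment of $\pi_1$ (and symmetrically $\pi_2$) is correct. On the support of $\sum_N\psi_N(\xi)\eta_{N/8}(\eta)$ one has $|\eta|\le|\xi|/2$, so $\jb{\xi+\eta}^s\jb{\xi}^{-s}$ is smooth at the scale $|\xi|\sim|\xi|+|\eta|$ and Coifman--Meyer applies.

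The step that fails is $\pi_3$. The symbol $m(\xi,\eta)=\jb{\xi+\eta}^s\jb{\xi}^{-s}\sum_{N\sim M}\psi_N(\xi)\psi_M(\eta)$ is bounded when $s\ge0$, but it is \emph{not} a Coifman--Meyer symbol. In the high$\times$high region the output frequency ranges over all of $|\xi+\eta|\les N$, and near the antidiagonal $\jb{\xi+\eta}^s$ varies on the unit scale rather than on the scale $N$. For instance, at $\xi=N$, $\eta=1-N$ one has $|\partial_\xi m|\sim sN^{-s}$, whereas Coifman--Meyer requires $\les N^{-1}$. This fails for every $0<s<1$, and unless $s$ is an even integer the $k$-th derivative bound fails for every $k>s$. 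The bound $|m|\les 1$, which is all that $s\ge0$ gives you here, is not enough. This high$\times$high term is precisely the nontrivial part of the fractional Leibniz rule.

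The standard fix is to localize the output frequency as well. Write $\pi_3=\sum_N\P_Nf\cdot g_N$ with $g_N:=\sum_{M\sim N}\P_Mg$. Only $N\ges K$ contribute to $\P_K\pi_3$, so pointwise
\[
K^s|\P_K\pi_3|\les\sum_{N\ges K}\Big(\frac{K}{N}\Big)^{s}\,\mathcal{M}\big(N^s\P_Nf\cdot g_N\big),
\]
where $\mathcal{M}$ is the Hardy--Littlewood maximal function. At $s=0$ the lemma is just H\"older, so take $s>0$.

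\begin{itemize}
\item Schur's test bounds the $\ell^2_K$-norm by $\big(\sum_N|\mathcal{M}(N^s\P_Nf\cdot g_N)|^2\big)^{1/2}$.
\item Then use the Littlewood--Paley characterization of $\|J^s\cdot\|_{L^r}$, the Fefferman--Stein vector-valued maximal inequality, $|g_N|\les\mathcal{M}g$, and the square function theorem. This gives $\|J^s\pi_3\|_{L^r}\les\|J^sf\|_{L^{p_1}}\|g\|_{L^{q_1}}$.
\item If $p_1=\infty$, use instead $\sup_N|N^s\P_Nf|\les\mathcal{M}(J^sf)$ together with the square function of $g$ in $L^r$.
\end{itemize}

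So $s>0$ enters through the geometric sum over output scales $K\les N$, not through $\jb{\xi+\eta}^s\les\jb{\xi}^s$.

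Fefferman--Stein needs $r>1$, and the lemma assumes exactly that: the hypothesis is $1<p_j,q_j,r\le\infty$. Your worry about $r\le1$ ``down to $1/2$'' is therefore a misreading. The only case left after the argument above is $r=p_j=q_j=\infty$, the Bourgain--Li endpoint. That case is not needed for the applications in the paper, all of which have $r<\infty$.
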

\noi
We now give a product estimate for products involving an exponential factor $e^{i\rho F}$.
%

\begin{lemma}\label{LEM:prodest}
Let $2\leq q<\infty$,  $0\leq s\leq \frac 32$, and $\s:=\max(0,s-\frac 12-\frac 1q)$. For $j=1,2$, let $F_{j}$ denote two-real valued functions such that $\dx F_j= v_j$ for $v_j \in H^{\s}$ for $j=1,2$. Then, 
\begin{align}
\| J^{s}_x[ e^{ \pm \rho F_j}g]\|_{L^q} &\les (1 + \|v_j\|_{H^{\s}}^2) \|J^{s}_x g\|_{L^{q}}, \label{eFg1}\\
\| J^{s}_x[ ( e^{ \pm \rho F_1} -e^{ \pm \rho F_2})g]\|_{L^q} &\les (\|v_1-v_2\|_{H^{ \s}}+ \|e^{\pm i\rho F_1}-e^{\pm i \rho F_2}\|_{L^{\infty}}  ) \notag \\
& \hphantom{X}\times (1+\|v_1\|_{H^{\s}} + \|v_2\|_{H^{\s}})^3 \|J^{s}_x g\|_{L^{q}}. \label{eFg2}
\end{align}
\end{lemma}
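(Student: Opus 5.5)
The plan is to apply the fractional Leibniz rule (Lemma~\ref{LEM:leib}) with the exponential in the $L^\infty$ slot, and then to bound the derivatives that land on $E:=e^{\pm i\rho F_j}$ through $\dx E=\pm i\rho v_j E$. (The exponent in the statement should be read as $\pm i\rho F_j$, so that $|E|=1$.) The obstruction is that $E\notin L^2$, so $J^sE$ has no meaning. We therefore never put $J^s$ on $E$ itself, and only ever differentiate it through $\dx E$. We split into $0\le s\le 1$ and $1<s\le \frac32$.

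\textbf{Case $0\le s\le 1$.} Write $J^s(Eg)=E\,J^sg+[J^s,E]g$. The first term is bounded by $\|J^sg\|_{L^q}$ since $|E|=1$. For the commutator we use a Kato--Ponce type estimate $\|[J^s,E]g\|_{L^q}\les \|D^{s}E\|_{L^{p_1}}\|g\|_{L^{p_2}}$, in a version where only $D^sE$, and never $E$ itself, needs to be controlled. Such a version can be proved by paraproducts: split $E$ into $\Pblo E$ and $\Pbhi E$, in the sense of \eqref{PbloE}. The low part contributes $\|\dx \Pblo E\|_{L^\infty}\les \|v_j\|_{L^2}$ via Bernstein's inequality. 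For the high part, $\|D^s\Pbhi E\|_{L^{p_1}}\les \|D^{s-1}\Pbhi(v_jE)\|_{L^{p_1}}$. When $s\le \frac12+\frac1q$ we have $\s=0$, and we choose exponents so that Sobolev embedding gives $\|D^{s-1}(v_jE)\|_{L^{p_1}}\les\|v_jE\|_{L^2}\le\|v_j\|_{L^2}$, with $\frac1{p_1}=\frac12-(1-s)$ and $g\in L^{p_2}$ controlled by $\|J^sg\|_{L^q}$ through the embedding $W^{s,q}\subset L^{p_2}$, $\frac1{p_2}=\frac1q-\frac1{p_1}$. When $s>\frac12+\frac1q$ the numerology forces $v_j\in H^{\s}$, with $\s=s-\frac12-\frac1q$. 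We then estimate $D^{s-1}(v_jE)$ with one more fractional Leibniz step, which places $\s$ derivatives on $v_j$ and fewer than one on $E$. This produces at most the quadratic factor $(1+\|v_j\|_{H^\s})^2$.

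\textbf{Case $1<s\le\frac32$.} Write $J^s\approx J^{s-1}\dx$ plus a bounded operator. Then $\dx(Eg)=E\dx g\pm i\rho\, v_jEg$. The term $J^{s-1}(E\dx g)$ falls under the first case with $s-1\in(0,\frac12]$, where $\s$ for $s-1$ is $0$. For $J^{s-1}(v_jEg)$ we apply Lemma~\ref{LEM:leib}, putting $J^{s-1}$ in turn on $v_j$, on $E$ (handled as in the first case), and on $g$. We then use Hölder and Sobolev: $v_j\in H^{\s}\subset L^{r}$ and $J^{s-1}v_j\in L^{r'}$, with $r,r'$ dictated by $\s=s-\frac12-\frac1q$, while $g$ and $J^{s-1}g$ are controlled by $\|J^sg\|_{L^q}$. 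One factor of $v_j$ appears explicitly, and a second can appear from $J^{s-1}E$, which gives the square in \eqref{eFg1}.

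\textbf{Difference estimate \eqref{eFg2}.} Repeat the argument with $E$ replaced by $E_1-E_2$. The undifferentiated term gives $\|E_1-E_2\|_{L^\infty}$. Each derivative produces $\dx(E_1-E_2)=\pm i\rho\big((v_1-v_2)E_1+v_2(E_1-E_2)\big)$, and each such term carries either $\|v_1-v_2\|_{H^\s}$ or $\|E_1-E_2\|_{L^\infty}$, multiplied by at most cubic powers of $\|v_i\|_{H^\s}$.

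\textbf{Main obstacle.} The hardest step will be the exponent bookkeeping at the endpoint $q$, and in particular at $s=\frac12+\frac1q$. There the Sobolev embeddings $W^{s,q}\subset L^{p_2}$ may hit $L^\infty$, which fails. To avoid this, I would first try to take $p_1<\infty$ strictly and use Lemma~\ref{LEM:leib} with $p_j<\infty$ throughout. If an endpoint still obstructs the argument, I would instead use a Littlewood--Paley decomposition directly, with Bernstein's inequality at each dyadic scale.
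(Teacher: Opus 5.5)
Your proposal is correct and follows essentially the paper's route. You split $e^{\pm i\rho F_j}$ via \eqref{PbloE}, differentiate it only through $\dx e^{\pm i\rho F_j}=\pm i\rho v_j e^{\pm i\rho F_j}$ combined with Sobolev (which is where $\s=s-\frac12-\frac1q$ and the extra Leibniz step for $s>\frac12+\frac1q$ come from), reduce $1<s\le\frac32$ to $J^{s-1}\dx$, and handle differences through $\dx(e^{\pm i\rho F_1}-e^{\pm i\rho F_2})$. The commutator packaging and your control of the low part by $\|\dx\Pblo e^{\pm i\rho F_j}\|_{L^\infty}$ are cosmetic variants; the latter still rests on the frequency localization \eqref{PloeiF}, in place of the paper's identity \eqref{MPeFg} plus square functions.

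There are two small slips. First, the choice $\frac1{p_1}=s-\frac12$ is admissible only for $s>\frac12$: for $s<\frac12$ take $p_1=\infty$ via Bernstein, and at $s=\frac12$ take a large finite $p_1$; the paper simply cites \cite{MP,CLOP} in this range. Second, your endpoint worry at $s=\frac12+\frac1q$ is unfounded, since $W^{s,q}\subset L^\infty$ holds there because $s>\frac1q$.
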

\begin{proof}
This lemma was proved in \cite[Lemma 2.7]{MP} for $0\leq s\leq \frac 1q$ and in \cite[Lemma 3.3]{CLOP} when $0\leq s \leq \frac 12$. 
We extend these ranges to cover the range $\frac12<s\leq \frac 32$. 
We want to use the fractional Leibniz rule however there is an issue in making sense of $D^{s}_x \Pblo e^{\pm i\rho F_j}$ in $L^{\infty}$, especially when $s$ is low. See \cite[Section 2.2]{CFL}.
Namely, $\Pblo$ is defined via \eqref{PbloE} and not as a projection to frequencies $\{|\xi|\les 1\}$. 

For $N\gg 1$, we first claim that 
\begin{align}
\P_{N}[\Pblo e^{i\rho F }\cdot g] = \P_{N}[ \Pblo e^{i\rho F} \cdot \wt{\P}_{N}g]\label{MPeFg}
\end{align}
point-wise. Note that we use $\cdot$ to denote a product and to make clear which factors the projection operators are acting on. 
To see this, for any test function $h$, we have 
\begin{align}
\begin{split}
\jb{ h, \P_{N}[\Pblo e^{i\rho F} \P_{\gg N} g]} &= \jb{ \P_{\gg N}g \cdot  \P_{N} h, \Pblo e^{i\rho F}} \\
& = \jb{ \PbHI [ \P_{\gg N}g \cdot  \P_{N} h], \Pblo e^{i\rho F}} \\
& =  \jb{ \PbHI [ \P_{\gg N}g \cdot  \P_{N} h], \PbHI \Pblo e^{i\rho F}} =0,
\end{split} \label{argument}
\end{align}
where in the last equality we used \eqref{PloeiF}.
Similarly, we have that $\P_{N}[\Pblo e^{i\rho F}  \P_{\ll N}g] =0.$ Thus, \eqref{MPeFg} follows. Then, by writing 
\begin{align*}
\P_{N}[ \Pblo e^{i\rho F} \wt{\P}_{N}g](x) = \int_{\R} \psi_{N}(y) [\Pblo e^{i\rho F} \wt{\P}_{N}g](x-y)dy,
\end{align*}
and by the Littlewood-Payley square function theorem and Minkowski's inequality, we have 
\begin{align}
\begin{split}
\|D^{s}_x  \Pbhi( \Pblo e^{i\rho F} g)\|_{L^{q}}  & \sim \bigg\|   \bigg( \sum_{N \gg 1}N^{2s} | \P_{N}[\Pblo e^{i\rho F} g]|^{2} \bigg)^{\frac 12} \bigg\|_{L^{q}_x}  \\
& \les \int_{\R} |\psi_{N}(y)| \bigg\|  \bigg( \sum_{N \gg 1}N^{2s}  | \wt{\P}_{N} g(x-y)|^{2} \bigg)^{\frac 12} \bigg\|_{L^{q}_x} dy \\
& \les  \int_{\R} |\psi_{N}(y)| \bigg\|  \bigg( \sum_{N \gg 1} N^{2s}| \wt{\P}_{N} g|^{2} \bigg)^{\frac 12} \bigg\|_{L^{q}_x} dy  \les \|D^{s}_x g\|_{L^{q}_x},
\end{split} \label{PbloDsE}
\end{align}
where we used the Littlewood-Payley square function theorem again in the last inequality.

We now consider the contribution from $\Pbhi e^{ \pm i\rho F_j}$.
By the fractional Lebiniz rule, we have
\begin{align*}
\| D^s_x[ \Pbhi e^{\pm i\rho F_j} \cdot g]\|_{L^q} &\les \|D^s_x g\|_{L^q} \|\Pbhi e^{\pm i\rho F_j}\|_{L^{\infty}} + \|g\|_{L^{\infty}} \|D^s \Pbhi e^{iF}\|_{L^q} \\
& \les (1+ \|D^s \Pbhi e^{iF}\|_{L^q}) \|J^s_x g\|_{L^q}.
\end{align*}
When $s\leq \frac 12+\frac 1q$, Sobolev inequality implies 
\begin{align*}
\|D^s \Pbhi e^{iF}\|_{L^q} \sim \| D^{s+\frac 12-\frac 1q}  \Pbhi e^{iF}\|_{L^2 } \les  \| D^{s-\frac 12-\frac 1q}  \Pbhi [ v_j e^{\pm i \rho F_j}\|_{L^2 }  \les \|v_j\|_{L^2}.
\end{align*}
If instead $\frac 12+\frac 1q < s\leq 1 +\frac 1q$, then we additionally use the fractional Leibniz rule and \eqref{PbloDsE} to obtain
\begin{align*}
 \| D^{s-\frac 12-\frac 1q}  \Pbhi [ v_j e^{\pm i \rho F_j}\|_{L^2 }   
 & \les  \| D^{s-\frac 12-\frac 1q}  \Pbhi [ v_j  \Pbhi e^{\pm i \rho F_j}\|_{L^2 }   + \| D^{s-\frac 12-\frac 1q}  \Pbhi [ v_j \Pblo e^{\pm i \rho F_j}\|_{L^2 }    \\
 & \les  \|v_j\|_{H^{\max(0, s-\frac 12-\frac 1q)}} \big(  1+ \|D^{s-1-\frac 1q}\Pbhi[ v_j e^{\pm i\rho F_j}\|_{L^{2}} \big) \\
 &\les \|v_j\|_{H^{\max(0, s-\frac 12-\frac 1q)}}  \big( 1+  \|v_j\|_{L^2})
\end{align*}
Hence, when $0\leq s\leq 1$, we have  shown that 
\begin{align}
\|D^{s}_x [ e^{\pm i\rho F_j}g]\|_{L^q} \les (1+\|v_j\|_{H^{\max(0,s-\frac 12-\frac 1q)}}^2) \|J^s_x g\|_{L^{q}}. \label{eFg3}
\end{align}
Now we consider when $s>1$, for which we have 
\begin{align*}
\|D^{s}_x [ e^{\pm i\rho F_j}g]\|_{L^q}  &\sim \|D^{s-1}_x \dx  [ e^{\pm i\rho F_j}g]\|_{L^q}  \\
& \sim \|D^{s-1}_x   [  v_j e^{\pm i\rho F_j}g]\|_{L^q} + \|D^{s-1}_x   [ e^{\pm i\rho F_j} \dx g]\|_{L^q}
\end{align*}
For the second term, we apply \eqref{eFg1} since $s-1\leq \frac 12$. For the first term similar arguments as above splitting $e^{\pm i \rho F_j}$ using $\Pblo$ and $\Pbhi$ and using \eqref{PbloDsE} and fractional Leibniz rule, we obtain
\begin{align*}
 \|D^{s-1}_x   [  v_j e^{\pm i\rho F_j}g]\|_{L^q}  & \les \|v\|_{H^{s-\frac 12-\frac 1q}}(1+\|v\|_{L^2})\|g\|_{H^s}.
\end{align*}
Combining this with \eqref{eFg3} establishes \eqref{eFg1} in the range $0\leq s\leq \frac 32$.
Finally for the difference estimate \eqref{eFg2}, we follow a similar strategy as in \eqref{PbloDsE}. We omit the details.
\end{proof}

As we will need to use the dyadic spaces $\wt{L^{4}_{T,x}}$, Lemma~\ref{LEM:prodest} will not alone be sufficient. We need to refine the estimates by using frequency decompositions and gaining derivatives from the smoother terms $e^{\pm i \rho F}$.

\begin{lemma}\label{LEM:PNe}
Let $2\leq q<\infty$ and $0\leq s\leq \frac 32$. For $j=1,2$, let $F_{j}$ denote two-real valued functions such that $\dx F_j= v_j$ for $v_j \in L^2$ for $j=1,2$.
Let $\mathcal{V}:=\|v_1\|_{L^2}+\|v_2\|_{L^2}$, $V:= v_1-v_2$, and $G(F_1,F_2):=e^{\pm i \rho F_1}-e^{\pm i \rho F_2}$.
 Then, for any $N\in 2^{\N}$, it hold that
\begin{align}
\| \P_{N} e^{\pm i \rho F_1}\|_{L^q_x} & \les  N^{-2-\frac 1q} \mathcal{V}^4 + N^{-1-\frac 1q}(1+\mathcal{V})\mathcal{V} \|\P_{\ges N}v\|_{L^2} + N^{-1}\|\P_{ N}v\|_{L^q} \label{PNe1}\\
\| \P_{N} G(F_1,F_2)\|_{L^q_x} 
& \les  N^{-1}\|\wt{\P}_{N}V\|_{L^q}+N^{-1} \V \|\wt{\P}_{N}v_2\|_{L^q}\|G\|_{L^{\infty}}     \notag \\
& \hphantom{X} +N^{-2-\frac 1q} \V^3 (1+\V)^3  \Big\{  \|V\|_{L^2} + \V\|G\|_{L^{\infty}}   \Big\}  \notag \\
& \hphantom{X} + N^{-1-\frac 1q} (1+\V) \Big\{   \V (1+\V)\|\P_{\ges N}V\|_{L^2} + \V\|\P_{\ges N}v_2\|_{L^2} \|G\|_{L^{\infty}}    \notag \\
&\hphantom{XXXXXXXXX}+  (1+\V)( \|\P_{\ges N}v_1\|_{L^2}+\|\P_{\ges N}v_2\|_{L^2}  )\|V\|_{L^2}         \Big\}.   \label{PNe2}
\end{align}
\end{lemma}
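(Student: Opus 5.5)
The plan is to exploit the fact that differentiating $e^{\pm i\rho F}$ brings down one factor of $v$, and then to iterate this identity once. For $N\ge 2$ the projection $\P_N$ annihilates frequencies $|\xi|\ll N$. Hence
\begin{align*}
\P_N e^{\pm i\rho F} = \pm i\rho\, \dx^{-1}\P_N\big[v\, e^{\pm i\rho F}\big],
\end{align*}
where $\dx^{-1}\P_N$ has a Fourier multiplier of size $N^{-1}$ with uniformly bounded $L^q$ operator norm. I write $v=v_1$, $F=F_1$, and decompose the product paraprodually as
\begin{align*}
v\, e^{\pm i\rho F} = v\,\Pblo e^{\pm i\rho F} + v\,\Pbhi e^{\pm i\rho F}.
\end{align*}

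For the first piece I use the argument of \eqref{MPeFg}, which localizes $v$ to $\wt{\P}_N v$. Together with $\|\Pblo e^{\pm i\rho F}\|_{L^\infty}\les 1$ (the kernel argument in \eqref{PbloDsE}), this gives the term $N^{-1}\|\P_N v\|_{L^q}$, with a wider projector that I absorb into the notation. For the second piece I split by frequency. When both factors have frequency $\ges N$, or $v$ is at $\sim N$ and $\Pbhi e^{\pm i\rho F}$ is at lower frequency, I bound the product in $L^q$. Specifically, I apply Bernstein to go from $L^{q'}$-type integrability to $L^q$ at output frequency $N$, which costs $N^{\frac12-\frac1q}$ if I place the product in $L^1$ via Cauchy--Schwarz, or uses $L^2\cdot L^{\infty}$. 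I estimate $\P_{\ges N}v$ in $L^2$, and I bound $\Pbhi e^{\pm i\rho F}$ in $L^2$ or $L^\infty$ using $\|\P_M e^{\pm i\rho F}\|_{L^2}\les M^{-1}\|v\|_{L^2}$ and Bernstein. After summing dyadically in $M$, this should produce $N^{-1-\frac1q}(1+\mathcal{V})\mathcal{V}\|\P_{\ges N}v\|_{L^2}$.

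The remaining interaction has $v$ at low frequency $\ll N$ and $\P_{\sim N}e^{\pm i\rho F}$ at high frequency. Here I iterate once more: I substitute the identity above for $\P_{\sim N}e^{\pm i\rho F}$, which gains a second factor $N^{-1}$. The resulting product of two $v$'s with $e^{\pm i\rho F}$ is then bounded crudely. I use Bernstein from $L^1$ (the product $v\cdot v$ lies in $L^1$ with norm $\mathcal{V}^2$) at frequency $N$, which gives a factor $N^{1-\frac1q}$. This is too lossy, so instead I plan to iterate the identity enough times, expanding in powers, that the fully low-frequency term picks up $N^{-2-\frac1q}\mathcal{V}^4$. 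The high-frequency pieces generated at each stage are again of the form $\|\P_{\ges N}v\|_{L^2}$ and are absorbed into the second term. The bookkeeping of the exponent $-2-\frac1q$ is the main obstacle. I would first try writing the product as $\dx^{-1}\P_N[(\P_{\ll N}v)\,\P_{\sim N}e^{\pm i\rho F}]$, expanding the latter via $\dx^{-1}$ again to get $N^{-2}\P_N[\P_{\ll N}v\cdot \P_{\sim N}(v e^{\pm i\rho F})]$. Then I apply Bernstein $L^2\to L^q$ (a factor $N^{\frac12-\frac1q}$) together with $\|\P_{\ll N}v\|_{L^\infty}\les N^{\frac12}\mathcal{V}$. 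If this loses $N^{1}$, I will instead note that a product of low frequencies landing at frequency $N$ forces one of the factors into $\P_{\ges N}$, which routes the term into the second bound, and I will reserve the $\mathcal{V}^4$ term for the residual from the $\Pblo$ definition \eqref{PbloE}.

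For \eqref{PNe2} I repeat the argument with the differences. I write
\begin{align*}
v_1 e^{\pm i\rho F_1} - v_2 e^{\pm i\rho F_2} = V e^{\pm i\rho F_1} + v_2\, G(F_1,F_2),
\end{align*}
and run the same decomposition for each piece. Each factor of the type $e^{\pm i\rho F_j}$ that is replaced by $G$ yields $\|G\|_{L^\infty}$, and each $v$ replaced by $V$ yields the corresponding $V$-norm. Expanding multilinearly then produces exactly the listed terms.
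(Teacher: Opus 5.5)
There is a gap at the step you flag as the main obstacle: you never actually produce $N^{-2-\frac1q}\V^4$, and the fallback you propose for it is wrong.

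\textbf{The key term.} In $\P_N[(\P_{\ll N}v)\,\wt{\P}_N e^{\pm i\rho F}]$ no factor of $v$ is forced into $\P_{\ges N}$. The exponential is not band-limited, and it is $e^{\pm i\rho F}$ itself that carries the frequency $\sim N$. So this term cannot be routed into the $\|\P_{\ges N}v\|_{L^2}$ bound.

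Likewise, the $\Pblo$ piece leaves no $\V^4$ residual. Since $\Pblo e^{\pm i\rho F}$ lives at frequencies $\les 1$, it only contributes $N^{-1}\|\wt{\P}_N v\|_{L^q}$.

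Your idea to ``iterate enough times'' is the right one, but it needs an explicit count. Each pass through the low-$v$/high-exponential interaction gains exactly $N^{-1}\|\P_{\ll N}v\|_{L^\infty}\les N^{-\frac12}\V$. This gives the recursion
$\|\P_N e^{\pm i\rho F}\|_{L^q}\les N^{-\frac12}\V\,\|\wt{\P}_N e^{\pm i\rho F}\|_{L^q}+N^{-1-\frac1q}\V\|\P_{\ges N}v\|_{L^2}+N^{-1}\|\wt{\P}_N v\|_{L^q}$.
Seed it with the trivial bound $\|\wt{\P}_N e^{\pm i\rho F}\|_{L^q}\les N^{-\frac12-\frac1q}\V$. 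Exactly three passes then yield $N^{-2-\frac1q}\V^4$, which is where the fourth power comes from. The by-products are absorbed using $N^{-\frac12}\|\wt{\P}_N v\|_{L^q}\les N^{-\frac1q}\|\P_{\ges N}v\|_{L^2}$ (Bernstein) and $\|\P_{\ges N}v\|_{L^2}\le\V$.

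The paper packages the same count differently:
\begin{enumerate}[(i)]
\item It first proves $\|\P_N e^{\pm i\rho F}\|_{L^\infty}\les N^{-\frac32}\V^3+N^{-\frac12}(1+\V)\|\P_{\ges N}v\|_{L^2}$, using two passes, each going through $L^2$ via Bernstein.
\item It then does one last pass in $L^q$, putting $\P_{\ll N}v$ in $L^q$ at cost $N^{\frac12-\frac1q}\V$ and $\wt{\P}_N e^{\pm i\rho F}$ in $L^\infty$.
\item It uses the same $L^\infty$ bound to sum over $v$ at frequencies $N_1\gg N$.
\end{enumerate}

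\textbf{Two smaller points.} First, the case ``$v$ at frequency $\sim N$, $\Pbhi e^{\pm i\rho F}$ at lower frequency'' cannot give $N^{-1-\frac1q}(1+\V)\V\|\P_{\ges N}v\|_{L^2}$. Bernstein from $L^2$ to $L^q$ at frequency $N$ costs $N^{\frac12-\frac1q}$, and the exponential at frequencies $\ll N$ supplies no decay in $N$. This piece belongs in $N^{-1}\|\wt{\P}_N v\|_{L^q}$, using $\|e^{\pm i\rho F}\|_{L^\infty}=1$. Your initial $\Pblo/\Pbhi$ split is therefore unnecessary.

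Second, for the difference estimate the same issue recurs. In $\P_N[v_2 G]$ with $v_2$ at frequency $\ll N$, $G$ sits at frequency $\sim N$, and simply pulling out $\|G\|_{L^\infty}$ leaves no decay. You must run the same recursion for $\wt{\P}_N G$. Seed it with $\|\P_N G\|_{L^\infty}\les N^{-\frac12}(\|V\|_{L^2}+\V\|G\|_{L^\infty})$, which follows from $\dx G=\pm i\rho(Ve^{\pm i\rho F_1}+v_2G)$. This is exactly the role of the auxiliary $L^\infty$ estimate on $\P_N G$ in the paper's proof.
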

\begin{proof}
We first establish \eqref{PNe1} in the case $q=\infty$. By Bernstein inequality, we have 
\begin{align}
\|\P_{N}e^{\pm i\rho F_1}\|_{L^{\infty}} 
& \les N^{-\frac 12} \| \P_{N}[ v_1 e^{\pm i\rho F_1}]\|_{L^{2}}\notag \\
& \les N^{-\frac 12} \big[   \| \P_{N}[  (\P_{\ll N}v_1) e^{\pm i\rho F_1}]\|_{L^{2}}  + \| \P_{N}[ (\P_{\ges N}v_1) e^{\pm i\rho F_1}]\|_{L^{2}}  \big] \notag\\
& \les N^{-\frac 12} \big[  \| \P_{\ll N}v\|_{L^{2}} \|\wt{\P}_{N}e^{\pm i\rho F_1}\|_{L^{\infty}} + \|\P_{\ges N}v\|_{L^2}    \big] \notag\\
&\les  \|v\|_{L^2} \| \wt{\P}_{N}e^{\pm i\rho F_1}\|_{L^2} + N^{-\frac 12} \|\P_{\ges N}v\|_{L^2} \notag \\
& \les  N^{-1} \|v\|_{L^2} \big\{\| \P_{\ll N}v\|_{L^{2}} \|\wt{\P}_{N}e^{\pm i\rho F_1}\|_{L^{\infty}} + \|\P_{\ges N}v\|_{L^2} \big\}  +N^{-\frac 12} \|\P_{\ges N}v\|_{L^2} \notag     \\
& \les N^{-\frac 32}\|v\|_{L^2}^{3} + N^{-\frac 12}(1+\|v\|_{L^2}) \|\P_{\ges N}v\|_{L^2} . \label{PNe3}
\end{align}
Now fix $2\leq q<\infty$. By employing similar arguments as above and using \eqref{PNe3}, we have 
\begin{align}
N\|\P_{N}e^{\pm i\rho F_1}\|_{L^{q}}  & \les  \|\P_{\ll N}v\|_{L^{q}}\|\wt{\P}_{N}e^{\pm i\rho F_1}\|_{L^{\infty}} + \|\wt{\P}_{ N}v\|_{L^q} +  \sum_{N_1\gg N} \|\P_{N_1}v\|_{L^{q}} \|\wt{\P}_{N_1}e^{\pm i\rho F_1}\|_{L^{\infty}} \notag \\
& \les N^{\frac 12-\frac 1q}\|v\|_{L^2} \|\wt{\P}_{N}e^{\pm i\rho F_1}\|_{L^{\infty}}+  \|\wt{\P}_{ N}v\|_{L^q} \notag  \\
&+\sum_{N_1\gg N} \big\{ N_{1}^{-1-\frac 1q}\|\P_{N_1}v\|_{L^2}\|v\|_{L^2}^3 + N_{1}^{-\frac{1}{q}}\|\P_{N_1}v\|_{L^2}(1+\|v\|_{L^2}) \|\P_{\ges N_1}v\|_{L^2}   \big\} \notag \\
& \les N^{-1-\frac 1q}\|v\|_{L^2}^{4} + N^{-\frac 1q}(1+\|v\|_{L^2}) \|v\|_{L^2} \|\P_{\ges N}v\|_{L^2} + \|\wt{\P}_{ N}v\|_{L^{q}}. \label{PNe22}
\end{align}
This proves \eqref{PNe1}. 
For \eqref{PNe2}, we have 
\begin{align}
\|\P_{N}G(F_1,F_2)\|_{L^q} & \les N^{-1} \|\P_{N}[ V e^{\pm i\rho F_1}]\|_{L^{q}} + N^{-1}\|\P_{N}[ v_2 G(F_1,F_2)] \|_{L^{q}},
\label{PNe2A}
\end{align}
and we are left to estimate both of the factors on the right-hand side of \eqref{PNe2A}. First, by a similar argument as in \eqref{PNe22} using \eqref{PNe3}, we have 
\begin{align}
\begin{split}
 \|\P_{N}[ V e^{\pm i\rho F_1}]\|_{L^{q}} & \les \|\wt{\P}_{N}V\|_{L^q} + N^{-1-\frac 1q}\mathcal{V}^3 \|V\|_{L^2}  +N^{-\frac{1}{q}} \|\P_{\ges N}v_1\|_{L^2} (1+\mathcal{V})\|V\|_{L^2}.
\end{split} 
\label{PNe2B}
\end{align}
For the second term on the right-hand side of \eqref{PNe2A}, we need an auxiliary estimate on $\P_{N}G(F_1,F_2)$ in $L^{\infty}_x$.
To start, we need a slight variant of \eqref{PNe3} which is proved in exactly the same way: 
\begin{align}
\begin{split}
\|\P_{N}[ g e^{\pm i\rho F_1}]\|_{L^{2}}  & \les N^{-\frac 32} \|v_1\|_{L^2}^{3}\|g\|_{L^2}  \\
& \qquad + N^{-\frac 12}(1+\|v_1\|_{L^2})\|\P_{\ges N} v_1\|_{L^2} \|g\|_{L^2} + \|\P_{\ges N}g\|_{L^2}. \label{PNe4}
\end{split} 
\end{align}
Similarly, using $G(F_1,F_2):=e^{\pm i\rho F_1}-e^{i\pm \rho F_2}$ and $V:=v_1-v_2$, we have 
\begin{align*}
\|\P_{N}[ v_2  G(F_1,F_2)]\|_{L^{2}}  & \les \mathcal{V} \|\wt{\P}_{N} G(F_1,F_2)\|_{L^{\infty}}  + \|\P_{\ges N}v_2\|_{L^2} \|G(F_1,F_2)\|_{L^{\infty}} \\
& \les N^{-\frac 12} \mathcal{V}  \big\{ \|\wt{\P}_{N}[ Ve^{\pm i\rho F_1}]\|_{L^2} + \|\wt{\P}_{N}[ v_2 G(F_1,F_2)]\|_{L^2}\big\}  \\
& \qquad +  \|\P_{\ges N}v_2\|_{L^2} \|G(F_1,F_2)\|_{L^{\infty}} \\
& \les N^{-1} \V (1+\V)^3\|V\|_{L^2} + N^{-1}\V^2 \|G(F_1,F_2)\|_{L^{\infty}} \\
& \qquad + N^{-\frac 12}\V \|\P_{\ges N}V\|_{L^2} +\|\P_{\ges N}v_2\|_{L^2} \|G(F_1,F_2)\|_{L^{\infty}}
\end{align*}
Then, by \eqref{PNe4}, 
\begin{align*}
\|\P_{N}G(F_1,F_2)\|_{L^{\infty}}& \les N^{-\frac 12} \|\P_{N}[ V e^{\pm i\rho F_1}]\|_{L^{2}} + N^{-\frac 12} \|\P_{N}[ v_2 G(F_1,F_2)]\|_{L^{2}} \\
& \les N^{-1} \V(1+\V)^3\|V\|_{L^2} +N^{-\frac 12}(1+\V)\|\P_{\ges N}V\|_{L^2} \\
& \qquad + N^{-\frac 12}\{ \|\P_{\ges N}v_2\|_{L^2} + N^{-\frac 12}\V^2\} \| G(F_1,F_2)\|_{L^{\infty}}. 
\end{align*}
Using this, we can obtain further decay in $N$:
\begin{align*}
\|\P_{N}G(F_1,F_2)\|_{L^{\infty}} & \les N^{-\frac 12}\|\P_{\ges N}V\|_{L^2} + N^{-\frac 12}\|V\|_{L^{2}} \|\wt{\P}_{N}e^{\pm i\rho F_1}\|_{L^{\infty}} \\
& \hphantom{X}  + N^{-\frac 12} \|\P_{\ges N}v_2\|_{L^2}\|G\|_{L^{\infty}}+N^{-\frac 12}\mathcal{V} \|\wt{\P}_{N}G\|_{L^{\infty}} \\
& \les N^{-\frac 12}(1+\V)^2  \|\P_{\ges N}V\|_{L^2}\\
& \hphantom{X} + N^{-\frac 12}(1+\V)\|\P_{\ges N}v_2\|_{L^2}\|G\|_{L^{\infty}} 
+ N^{-\frac 32} \V^{3}\|G\|_{L^{\infty}} \\
& \hphantom{X} + \big\{ N^{-\frac 32} \V^2 (1+\V)^3 + N^{-1}(1+\V)\|\P_{\ges N}v_2\|_{L^2}\big\} \|V\|_{L^2} . 
\end{align*}
This implies that 
\begin{align*}
\| \P_{N}[v_2 G]\|_{L^q} &\les \|\P_{\ll N}v_2\|_{L^q} \| \wt{\P}_{N}G\|_{L^{\infty}} + \|\wt{\P}_{N}v_2\|_{L^q} \|G\|_{L^{\infty}} + \sum_{N_1 \gg N} \|\P_{N_1}v_2\|_{L^q} \|\wt{\P}_{N_1}G\|_{L^{\infty}} \\
& \les N^{-\frac 1q} \V(1+\V)\Big\{ (1+\V)\|\P_{\ges N}V\|_{L^2} + \P_{\ges N}v_2\|_{L^2}\|G\|_{L^{\infty}} \Big\} \\
& \hphantom{X} + N^{-1-\frac 1q} \V^{3} \Big\{ \V\|G\|_{L^{\infty}}+(1+\V)^3 \|V\|_{L^2}\Big\} \\
& \hphantom{X} +N^{-\frac 12-\frac 1q}\V(1+\V)\|\P_{\ges N}v_2\|_{L^2}\|V\|_{L^2} +\| \wt{\P}_{N}v_2\|_{L^q}\|G\|_{L^{\infty}}.
\end{align*}
Combining this with \eqref{PNe2B} in \eqref{PNe2A}, we then obtain \eqref{PNe2}.
\end{proof}

\section{The gauge transform and properties of solutions}
\label{SEC:main}

\subsection{Fourier restriction norm spaces}

For $s,b\in \R$, and a dispersion relation $\o:\R\to \R$
we consider the Fourier restriction norm spaces $X_{\o}^{s,b}(\R\times \R)$ which are the completion of $\S(\R\times \R)$ under the norms \cite{BO93, BO932}:
\noi
\begin{align}
\| u\|_{X_{\o}^{s, b}(\R\times \R)} &= \big\| \jb{\tau-\o(\xi)}^{b}\jb{\xi}^{s}\ft u(\tau, \xi)\big\|_{L^2(\R\times \R)}, \label{Xsb}
\end{align}
We use the dispersion relations:
\begin{align*}
\o_{S_{\pm}}(\xi) = \pm\xi^2 \quad \text{and} \quad \o_{BO}(\xi) = \xi |\xi|.
\end{align*}
and write $X^{s,b}_{S_{\pm}}$ in place of $X^{s,b}_{\o_{S_{\pm}}}$, respectively, and similarly for $X^{s,b}_{BO}$. In particular, these spaces are adapted to the linear operators $S_{\pm}(t):=e^{\pm it\dx^2}$ and $e^{-it\H\dx^2}$, respectively.

We also define Besov-variations of these spaces by dyadically decomposing the modulation variable. We will do this for the homogenous variety in the modulation variable defining 
\begin{align*}
\| u\|_{\dot{X}_{\o}^{s,b}} = \big\|  |\tau -\o(\xi)|^b \jb{\xi}^s \ft u(\tau,\xi)\big\|_{L^2_{\tau,\xi}}.
\end{align*}
Two important such spaces are: 
\begin{align*}
\| u\|_{\dot{X}^{s,b,1}_{\o}} = \sum_{L\in 2^{\Z} } \| \Q_{L} u\|_{\dot{X}_{\o}^{s,b}}, \quad \|u\|_{\dot{X}^{s,b,\infty}_{\o}} =\sup_{L\in 2^{\Z}} \|\Q_{L}u\|_{\dot{X}^{s,b}_{\o}}. 
\end{align*}
We then have $\dot{X}^{s,b,1}_{\o}\subset \dot{X}^{s,b}_{\o}\subset \dot{X}^{s,b,\infty}_{\o}$.

In order to reach the $L^2$-endpoint for the BO part of \eqref{SBO}, we will use the $U^2$-$V^2$-spaces, which provide an effective replacement for the space $X^{s,\frac 12,1}_{\o}$. 
These spaces were first used in \cite{U2V21, U2V22, U2V23} and more details on their properties can be found in \cite{U2V22, U2V23, Obernotes}.
As we will only use these spaces for the gauged variable $w$, we only need to use the dispersion relation $\o_{S}$.

\begin{definition}\rm \label{DEF:UpVp}
Let $s\in \R$, $1\leq p<\infty$. Let $\mathcal{Z}$ be the collection of finite partitions $\{t_k\}_{k=0}^K$ of $\R$ with $-\infty< t_0< \ldots <t_{K}\leq \infty$. If $t_{K}=\infty$, we use the convention that $u(t_{K}):=0$ for $u:\R\to H^{s}(\R)$.

\noi
\textup{(i)} We define the space $V^{p}H^s=V^{p}(\R;H^s)$ as the collection of functions $u:\R\to H^s$ such that $\|u\|_{V^{p}H^s}<\infty$, where 
\begin{align*}
\|u\|_{V^p H^s} : = \sup_{  \{t_k\}_{k=0}^{K}\in \mathcal{Z} } \bigg( \sum_{k=1}^{K} \|u(t_k)-u(t_{k-1})\|_{H^s}^{p} \bigg)^{1/p}.
\end{align*}
We also define $V_{\text{rc}}^{p}(\R;H^s)$ to be the closed subspace of all right-continuous functions in $V^p H^s$ such that $\lim_{t\to -\infty}u(t)=0$.

\noi
\textup{(ii)}  A $U^p$-atom is defined by a step-function $a:\R\to H^s$ of the form
\begin{align*}
a(t) = \sum_{k=1}^{K} \phi_{k-1}\ind_{[t_{k-1},t_{k})}(t)
\end{align*}
where $\{t_{k}\}_{k=0}^{K}\in \mathcal{Z}$ and $\{ \phi_{k}^{K-1}\subset H^s$ with $\sum_{k=0}^{K-1} \|\phi_k\|_{H^s}^{p}=1$. We then define the atomic function space $U^p H^s= U^p(\R; H^s(\R))$ as the collection of functions $u:\R\to H^s$ of the form
\begin{align*}
u(t)=\sum_{j=1}^{\infty} \ld_j a_j (t) \quad \text{where}\,\, \{a_j\}_{j=1}^{\infty} \,\, \text{are} \,\,U^p-\text{atoms and} \,\, \{ \ld_j\}_{j=1}^{\infty}\in \l^1(\N;\mathbb{C})
\end{align*}
with the norm
\begin{align*}
\|u\|_{U^p H^s} : = \inf \big\{   \| \ld\|_{\l^1} \, :\, u=\sum_{j=1}^{\infty}\ld_j a_j, \,\, a_j \,\,\text{are} \,\, U^p H^s-\text{atoms}    \big\}
\end{align*}

\noi
\textup{(iii)} We define $U^p_{S_{-}}H^s=U^{p}_{S}H^s$ and $V^p_{S_{-}}H^s=V^p_{S}H^s$ to be the spaces of all functions $u:\R \to H^s$ such that the following norms are finite, respectively:
\begin{align*}
\| u\|_{U^p_{S}H^s} := \|S_{-}(-t) u\|_{U^p H^s} \quad \text{and} \quad \|u\|_{V^{p}_{S}H^s} := \|S_{-}(-t) u\|_{V^pH^s}.
\end{align*}
\end{definition}

We recall that the spaces $U^pH^s$, $V^p H^s$, and $V^p_{\text{rc}}H^s$ are Banach spaces (and so are $U^p_{S}H^s$ and $V_{S}^pH^s$). Moreover, the following embeddings hold: 
\begin{align}
U^{p}H^s \subset V^p_{\text{rc}}H^s \subset U^{q}H^s \subset L^{\infty}(\R;H^s) \label{Upembed}
\end{align}
for $1\leq p<q<\infty$.

We recall some further properties of the adapted function spaces $U^{p}_{S}H^s$ and $V_{S}^{p}H^s$.

\begin{lemma}\label{LEM:Upprops}
\textup{(i)} \textup{(Transference principle)} Suppose that we have 
\begin{align*}
\| T(S(t)\phi_1,\ldots, S(t)\phi_k)\|_{L^{p}_{t}L^{q}_{x}} \les \prod_{j=1}^{k} \|\phi_j\|_{L^2_x}
\end{align*}
for some $1\leq p,q\leq \infty$. Then, we have 
\begin{align*}
\| T(u_1, \ldots, u_k) \|_{L^{p}_{t}L^{q}_{x}} \les \prod_{j=1}^{k} \| u_j\|_{U^p_{S}L^2_x}
\end{align*}
\textup{(ii)} \textup{(Embeddings)} The following inclusions hold:
\begin{align}
\dot{X}^{s,\frac 12, 1} \subset U^2_{S}H^s \subset V^{2}_{S}H^s \subset \dot{X}^{s,\frac 12,\infty}. \label{U2Xsb}
\end{align}
\textup{(iii)} For any $M\in 2^{\Z}$ and $1\leq p<\infty$, it holds that 
\begin{align}
\| \Q_{\geq M} u\|_{L^{2}_{t,x}} &\les M^{-\frac 12} \|u\|_{V^{2}_{S}L^2_x}, \label{Qgain} \\
\| \Q_{\leq M} u\|_{V^{p}_{S}L^2_x},  \les \|u\|_{V^{p}_{S}L^2_x} \quad &\text{and} \quad \| \Q_{\leq M} u\|_{U^{p}_{S}L^2_x}  \les \|u\|_{U^{p}_{S}L^2_x} . \label{Qbddness}
\end{align}
\textup{(iv)}  Let $I \subset \R$ be an interval. Then it holds that 
\begin{align}
\|  \ind_{I} u \|_{V^p_{S}L^2_x} \leq 2\| u\|_{V^{p}_{S}L^2_x}  \quad \text{and}\quad \|  \ind_{I} u \|_{U^p_{S}L^2_x} \leq \| u\|_{U^{p}_{S}L^2_x}. \label{Indbdd}
\end{align}
\end{lemma}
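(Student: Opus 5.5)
The four parts are standard facts from the $U^p$--$V^p$ theory (Hadac--Herr--Koch, Koch--Tataru--Vi\c{s}an). The plan is to reduce each one to the free case $S(-t)u$ and then use the atomic or variational structure.

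\emph{Part (i), transference.} By the definition of the $U^p_S$ norm and the triangle inequality, it suffices to prove the bound when each $u_j = S(t)a_j(t)$ with $a_j$ a $U^p$-atom, $a_j=\sum_k \phi_{j,k}\ind_{[t_{j,k-1},t_{j,k})}$. Take the common refinement of the $k$ partitions into intervals $I_m$. On each $I_m$, $T(u_1,\dots,u_k)=T(S(t)\phi_{1,k_1(m)},\dots,S(t)\phi_{k,k_k(m)})$, so the hypothesis applied on $I_m$ gives the bound $\prod_j\|\phi_{j,k_j(m)}\|_{L^2}$ in $L^p(I_m;L^q_x)$. Summing the $p$-th powers over $m$ and using $\sum_{k}\|\phi_{j,k}\|^p_{L^2}=1$, together with $\ell^p\subset\ell^\infty$ for all but one factor, shows that the total is at most $1$. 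The sum over atoms then uses multilinearity and the $\ell^1$ coefficients.

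\emph{Part (ii), embeddings.} For $\dot X^{s,\frac12,1}\subset U^2_SH^s$, write $\Q_L u$ as a superposition of modulated free solutions, $\Q_L u=\int \eta_L(\lambda)\,e^{it\lambda}S(t)\phi_\lambda\,d\lambda$. A function $e^{it\lambda}\phi$ restricted to modulation $\sim L$ can be approximated by a step function with $\sim$ one atom per time interval of length $L^{-1}$, which gives $\|\Q_Lu\|_{U^2_S}\lesssim L^{1/2}\|\Q_Lu\|_{L^2}$; summing over $L$ gives the claim. The middle inclusion $U^2\subset V^2$ is \eqref{Upembed}. For $V^2_S\subset\dot X^{s,\frac12,\infty}$, the key step is the estimate \eqref{Qgain}.

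\emph{Part (iii).} For \eqref{Qgain}, set $f=S(-t)u\in V^2$. Then $\Q_{\geq M}u$ corresponds to the time multiplier $1-\eta_M(\tau)$ applied to $f$, and we have $\|P_{\ge M}f\|_{L^2_t}^2\lesssim \sum_j M^{-1}\sup_{|h|\le M^{-1}}\|f(t+h)-f(t)\|^2$ on a grid of step $M^{-1}$. The $V^2$ norm controls this sum of squared increments, giving $M^{-1}\|f\|_{V^2}^2$. Applying this to $\Q_L$ with $M=L$ gives the embedding into $\dot X^{s,\frac12,\infty}$. For \eqref{Qbddness}, $\Q_{\le M}$ is convolution in time of $f$ with an $L^1$ kernel $\check\eta_M$; translation is an isometry on $V^p$ and on $U^p$, so Minkowski gives the bound.

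\emph{Part (iv).} Restricting a $U^p$ atom to an interval $I$ yields again an atom (up to a norm that is $\le1$), which proves the $U^p$ statement. For $V^p$, any partition sum for $\ind_I u$ involves increments of $u$ inside $I$ plus at most two jumps $u(a)$ and $u(b)$, each bounded by $\|u\|_{V^p}$ because $u(-\infty)=0$. This gives the factor $2$.

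The main obstacle is the atom-counting argument in (ii) for the $\dot X^{s,\frac12,1}\subset U^2$ embedding, together with the $V^2$ smoothing estimate \eqref{Qgain}. Both require care with step-function approximation at scale $L^{-1}$. I would follow Hadac--Herr--Koch, Propositions 2.19--2.20.
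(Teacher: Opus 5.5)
Your proposal follows the same standard arguments the paper relies on: the paper only cites Hadac--Herr--Koch for (i)--(iii) and reads (iv) off the definitions. However, two of your sketched steps fail as written.

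In (i), you bound all but one factor by $\sup_k\|\phi_{j,k}\|_{L^2}\le 1$. This leaves $\sum_m\|\phi_{1,k_1(m)}\|_{L^2}^p$, which counts $\|\phi_{1,k}\|_{L^2}^p$ once for each refinement interval inside the $k$-th interval of the first partition. It is not bounded by $1$: take $u_1$ a one-step atom and $u_2$ an atom with many steps. What works is that $m\mapsto(k_1(m),\dots,k_n(m))$ is injective, so $\sum_m\prod_j\|\phi_{j,k_j(m)}\|_{L^2}^p\le\prod_j\sum_k\|\phi_{j,k}\|_{L^2}^p=1$.

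In (ii), set $f=S(-t)\Q_Lu$ and $f_L=\sum_k f(k/L)\ind_{[k/L,(k+1)/L)}$. The factor $L^{\frac12}$ appears only if $f_L$ is a \emph{single} atom times $(\sum_k\|f(k/L)\|_{L^2}^2)^{1/2}$, combined with the sampling bound $\sum_k\|f(k/L)\|_{L^2}^2\les L\|f\|_{L^2_{t,x}}^2$ for functions with time frequencies $\les L$. One atom per interval only gives the $\ell^1$ sum $\sum_k\|f(k/L)\|_{L^2}$. For the same reason you cannot superpose the waves $e^{it\lambda}S(t)\phi_\lambda$ and apply Minkowski, since each has infinite $V^2_S$ (hence $U^2_S$) norm. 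You must sample $f$ as a whole and then absorb $f-f_L$, e.g.\ by telescoping over dyadically refined grids; the $j$-th refinement costs $\les 2^{-j/2}L^{\frac12}\|f\|_{L^2_{t,x}}$.

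In (iii), the $V^p$ half of \eqref{Qbddness} via Minkowski is fine if done partition by partition. For $U^p$, however, the vector-valued Minkowski inequality is unavailable because translation is not continuous there. For $a=\ind_{[0,\infty)}\phi$ one has $\|a-a(\cdot-h)\|_{U^p}=\|\ind_{[0,h)}\phi\|_{U^p}=\|\phi\|_{L^2}$ for every $h>0$, so $h\mapsto a(\cdot-h)$ is not Bochner integrable in $U^p$. Pass from the $V^{p'}$ bound to $U^p$ through the duality $(U^p)^*=V^{p'}$ instead.

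For \eqref{Qgain}, your grid inequality ignores the tails of $\check\eta_M$ beyond $|h|\le M^{-1}$; this is repairable by chaining. It is simpler to use $\|f-f(\cdot-h)\|_{L^2_{t,x}}^2\le|h|\,\|f\|_{V^2}^2$, obtained by splitting $t$ into residue classes modulo $h$. Insert this into $f-\check\eta_M*f=\int\check\eta_M(h)\,(f-f(\cdot-h))\,dh$, which yields the factor $M^{-\frac12}\int|\check\eta(y)|\,|y|^{\frac12}\,dy$ at once.

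Finally, in (iv) the constant $2$ needs a specific grouping. Put the inner increments and the exit jump into one partition sum for $u$ ending at $t_K=\infty$, where $u(t_K):=0$. Bound the entry jump separately via the partition $\{t,\infty\}$. Bounding the three pieces separately gives only $3^{1/p}$, which exceeds $2$ for small $p$. Also, the vanishing at $-\infty$ you invoke is part of $V^p_{\text{rc}}$, not of $V^p$ as defined.
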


\begin{proof}
The properties (i), (ii), (iii) are well-known and can be found in, for example, \cite{U2V22}, while \eqref{Indbdd} follow from the definitions of the $U^p$, $V^p$ spaces and \eqref{Upembed}.
\end{proof}

We will use the following versions of the adapted spaces in Definition~\ref{DEF:UpVp} (iii) with dyadic spatial frequency decomposition.

\begin{definition}\rm
\noi
\textup{(i)} Let $s\in \R$. We define $Z^{s}(\R)$ to be the space of all tempered distributions $u:\R\to H^s(\R)$ such that $\|u\|_{Z^{s}(\R)} <\infty$, where the $Z^s$-norm is defined by
\begin{align}
\| u\|_{Z^s(\R)}: = \bigg( \sum_{ \substack{N\geq 1 \\ \text{dyadic} } } N^{2s} \|\P_{N}u\|_{U^2_{S}L^2}^{2} \bigg)^{\frac 12}. \label{ZsT}
\end{align}

\noi
\textup{(ii)}  Let $s\in \R$. We define $Y^s(\R)$ as the space of all tempered distributions $u:\R\to H^s(\R)$ such that for every $N\in \N$, the map $t\mapsto \P_{N}u(t)$ belongs to $V^{2}_{\text{rc}}H^s$ and $\|u\|_{Y^s(\R)} <\infty$, where 
\begin{align*}
\|u\|_{Y^s(\R)}: = \bigg( \sum_{ \substack{N\geq 1 \\ \text{dyadic} } } N^{2s} \|\P_{N}u\|_{V^2_{S}L^2}^{2} \bigg)^{\frac 12}.
\end{align*}
\end{definition}

We then have the embeddings:
\begin{align}
U^2_{S}H^s \subset Z^{s}\subset Y^s \subset V^{2}_{S}H^s \subset U^{p}_{S}H^s \label{ZYembed}
\end{align}
for $p>2$. It is then immediate from \eqref{ZYembed} that \eqref{Qgain} and \eqref{Qbddness} also hold true for $Z^s$ and $Y^s$.

We now state linear and bilinear estimates adapted to the these spaces.

\begin{lemma}
\textup{(i)}  For $2\leq p \leq 6$, we have
\begin{align}
\| u\|_{\wt{L^{p}_{t,x}}} & \les \|u\|_{Y^{0}}, \label{L4}
\end{align}
\textup{(ii)}  Suppose that $N_1,N_2\in 2^{\N}$ such that $N_1 \gg N_2$. Then,
\begin{align}
\begin{split}
\| \P_{N_1} u \, \P_{N_2} v\|_{L^{2}_{t,x}} \les N_1^{-\frac 12} \min\big( & \| \P_{N_1}u\|_{U^{2}_{S}L^2_x} \|\P_{N_2}v\|_{X^{0,\frac 12+}_{S_{\pm}}},   \| \P_{N_1}u\|_{X^{0,\frac 12+}_{S_{\pm}}} \|\P_{N_2}v\|_{U^2_{S}L^2_x}, \\ 
&   \| \P_{N_1}u\|_{U^{2}_{S}L^2_x} \| \P_{N_2}v\|_{U^{2}_{S}L^2_x} ,\|\P_{N_1}v\|_{X^{0,\frac 12+}_{S_{\pm}}}  \|\P_{N_2}v\|_{X^{0,\frac 12+}_{S}} \big)
\end{split} \label{bilin1}
\end{align}
We also have the variant:
\begin{align}
\| \P_{N}[ u \cj{v}]\|_{L^{2}_{t,x}} \les N^{-\frac 12} \| u\|_{X^{0,\frac 12+}_{S}} \|v\|_{X^{0,\frac 12+}_{S}}. \label{bilin2}
\end{align}
\end{lemma}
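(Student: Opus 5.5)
The plan is to reduce every estimate to a statement about free solutions and then extend it to the adapted spaces. For the free problem we use the linear Strichartz estimate and the bilinear Strichartz estimate of Bourgain and Ozawa--Tsutsumi.

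\emph{Part (i).} For $2\le p\le 6$ the one-dimensional Strichartz estimate is admissible in the diagonal case, via interpolation between $L^\infty_tL^2_x$ and $L^6_{t,x}$, and gives $\|S(t)\phi\|_{L^p_{t,x}}\les\|\phi\|_{L^2}$. Here, as the lemma is stated, the $L^p_{t,x}$-norm of a free solution is scale-dependent for $p\neq 6$. For $p<6$ we therefore bound it by a time-localized or inhomogeneous version. In the setting of the paper everything is restricted to $[0,T]$ with $T\le1$, so I will use H\"older in time to pass from $L^6_t$ to $L^p_t$ on a unit interval. The transference principle, Lemma~\ref{LEM:Upprops}(i), then gives $\|\P_N u\|_{L^p_{t,x}}\les\|\P_Nu\|_{U^p_SL^2}$. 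Since $p>2$, we have the embedding $V^2_S\subset U^p_S$ from \eqref{Upembed}, so the right-hand side is $\les\|\P_N u\|_{V^2_SL^2}$. For $p=2$ we simply use $V^2\subset L^\infty_tL^2_x$ on a bounded time interval. Squaring, summing over $N$ and recalling the definition of $\wt{L^p_{t,x}}$ and of $Y^0$ yields \eqref{L4}.

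\emph{Part (ii).} The core estimate is for free solutions with $N_1\gg N_2$:
\[
\|S(t)\P_{N_1}\phi_1\,S_{\pm}(t)\P_{N_2}\phi_2\|_{L^2_{t,x}}\les N_1^{-\frac12}\|\phi_1\|_{L^2}\|\phi_2\|_{L^2},
\]
with either sign for the second propagator. I will prove this by Plancherel. The space-time Fourier transform of the product is supported on $\tau=-\xi_1^2\mp\xi_2^2$, $\xi=\xi_1+\xi_2$. After Cauchy--Schwarz the task is to bound the measure of the fibre, which reduces to the Jacobian $|\partial_{\xi_1}(\xi_1^2\pm(\xi-\xi_1)^2)|=2|\xi_1\mp(\xi-\xi_1)|\sim N_1$. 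This uses $|\xi_1|\sim N_1\gg|\xi_2|$, and holds for both signs.

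The four versions in \eqref{bilin1} then follow by extension. For the $U^2_S$ inputs, apply transference, Lemma~\ref{LEM:Upprops}(i), with $p=q=2$. For the $X^{0,\frac12+}_{S_\pm}$ inputs, write $v=\int e^{it\lambda}S_\pm(t)\phi_\lambda\,d\lambda$ and use $\int\|\phi_\lambda\|_{L^2}d\lambda\les\|v\|_{X^{0,\frac12+}}$, by Cauchy--Schwarz in $\lambda$ since $b>\frac12$. Because transference is multilinear and also accepts one input represented in this way, the mixed bounds follow; so does the pure $X^{s,b}$ bound.

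For \eqref{bilin2} we need a separate computation. The product $u\cj v$ with both factors Schr\"odinger corresponds to the phase $-\xi_1^2+\xi_2^2$, where now $\xi=\xi_1-\xi_2$. The fibre Jacobian is $2|\xi_1-\xi_2|=2|\xi|\sim N$ on the output frequency piece $\P_N$. This holds with no separation assumption on the inputs and gives the factor $N^{-\frac12}$ directly; the same $X^{0,\frac12+}$ superposition argument then finishes it.

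The main obstacle is keeping track of the conventions. Transference in Lemma~\ref{LEM:Upprops}(i) for $p=2$ requires $U^2$, not $V^2$, which is why $U^2$ appears in \eqref{bilin1}. In part (i) it is the move from $U^p$ to $V^2$ for $p>2$, together with the endpoint $p=2$, that needs the care about time localization and Besov summation.
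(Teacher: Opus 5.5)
Your proposal is correct and follows essentially the paper's argument. For part (i) you use Strichartz, transference and $V^2_S\subset U^p_S$ for $p>2$; the paper transfers only the $L^6_{t,x}$ estimate and interpolates with $p=2$, which is slightly more economical than transferring at each $p$. For part (ii) you extend the free bilinear Strichartz estimate to $U^2_S$ atoms and to $X^{0,\frac12+}$ functions written as superpositions of modulated free solutions; you derive that free estimate by the Plancherel/Jacobian computation, where the paper just cites Bourgain and Ozawa--Tsutsumi. Your caveat that (i) fails globally in time for $p<6$ is right: the paper's ``trivial $p=2$ case'' tacitly needs a bounded time interval, which matches how the lemma is used, for functions supported in $[0,T)$ with $T\le 1$. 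Likewise, your Jacobian bound $2|\xi|\sim N$ for \eqref{bilin2} requires $N\geq 2$, since $\P_1$ contains $\xi$ near $0$, but that is the only case the paper uses.
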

\begin{proof}
We begin with \eqref{L4}. From the standard Strichartz estimate for the Schr\"{o}dinger equation 
\begin{align*}
\| \P_{N} S(t)\phi \|_{L^6_{t,x}} \les  \|\P_{N}\phi\|_{L^2_x},
\end{align*}
 the transference principle (Lemma~\ref{LEM:Upprops}), and \eqref{Upembed}, we have
\begin{align*}
\|\P_{N} u\|_{L^{6}_{t,x}} \les \|\P_{N}u\|_{U^{6}_{S}L^2_x} \les \|\P_{N}u\|_{V^{2}_{S}L^2_x}.
\end{align*}
Squaring both sides, summing over $N$, and interpolating with the trivial $p=2$ case, then yields \eqref{L4}. 
For \eqref{bilin1}, this follows from transference with the bilinear Strichartz estimate \cite{BO98, OT}
\begin{align}
\max_{\pm_1, \pm_2\in \{\pm1\}}\| S_{\pm_1}(t)\P_{N_1}\phi_1 \cdot S_{\pm_2}(t)\P_{N_2}\phi_2\|_{L^2_{t,x}} \les N_1^{-\frac 12} \|\phi_1\|_{L^2_x}\|\phi_2\|_{L^2_x}. \label{bilin0}
\end{align}
For the mixed bounds with one factor in $U^2_{S}L^2_x$ and the other in $X^{s,\frac 12+}$, one can first extend \eqref{bilin0} to the setting where $S(t)\phi_1$ is replaced by a space-time function $u\in U^2_S L^2_x$ using the atomic definition in Definition~\ref{DEF:UpVp}, and then replace $S(t)\phi_2$ by $v\in X^{0,\frac 12+}_{S}$ using the standard argument for transference in $X^{s,b}$. See \cite{TAObook}.
\end{proof}

Given a time interval $I \subset \R$, we define localised in time versions of these spaces as follows: if $u: I \times \M \to \C$, then 
\begin{align}
\|u\|_{B_{I}}: =\inf\{ \|\wt{u}\|_{B} \, : \, \wt{u}:\R\times \R \to \C, \,\, \wt{u}\vert_{I\times \R} = u\}, \label{localspace}
\end{align}
where $B$ is any one of the spaces $\{X_{\o}^{s,b}, U^2_{S}H^{s}_{x},  Z^{s}, Y^{s}\}$. We use the notation $U^{2}_{S;I}H^{s}_x =(U^{2}_{S}H^{s}_x)_{I}$. 
When $I=[0,T)$ for some $T>0$, we denote the spaces $B_{I}=B_{T}$. 

We recall the following linear estimates related to the Fourier restriction norm spaces. See~\cite{MP}, for example.

\begin{lemma}\label{LEM:linXsb}
Let $s, b\in \R$, $0<T\leq 1$.
\noi
\textup{(i)}  Let $0<\dl<\frac 12$. Then,
\begin{align}
\|  e^{it \o(\dx)} f\|_{X_{\o;T}^{s,b}} \les \|f\|_{H^{s}},  \quad \text{and} \quad 
\bigg\| \int_{0}^{t} e^{i(t-t')\o(\dx)} g(t')dt' \bigg\|_{X_{\o;T}^{s,\frac 12+\dl}} & \les T^{\dl} \|g\|_{X_{\o;T}^{s, -\frac 12 +2\dl}}
.\label{lin2}
\end{align}
\textup{(ii)} Given $0\le \dl <\frac18$, the following estimate holds
\begin{align}
\|u\|_{L^{4}_{T,x}}& \les \|u\|_{\wt{L^{4}_{T,x}}} \les  T^{\frac 14-2\dl - } \|u\|_{X_{S; T}^{0,\frac 12-2\dl}}. 
 \label{L4Strich}
\end{align}
\end{lemma}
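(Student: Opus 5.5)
The statement is Lemma~\ref{LEM:linXsb}, which collects standard linear estimates. I will prove each part separately, reducing to untruncated estimates on $\R$ and then passing to the restriction spaces through \eqref{localspace}.

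\emph{Part (i).} For the homogeneous bound I take the extension $\eta(t)e^{it\o(\dx)}f$, with $\eta$ a smooth cutoff equal to $1$ on $[0,1]$. Its space-time Fourier transform is $\ft\eta(\tau-\o(\xi))\ft f(\xi)$. Hence its $X^{s,b}_\o$-norm equals $\|\jb{\cdot}^b\ft\eta\|_{L^2}\|f\|_{H^s}$, and this is finite for every $b$.

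For the Duhamel bound I use the standard time-localization estimate: for $-\frac12<b'\le 0\le b\le b'+1$ and $T\le 1$,
\begin{align*}
\Big\|\eta_T(t)\int_0^t e^{i(t-t')\o(\dx)}g(t')\,dt'\Big\|_{X^{s,b}_\o}\les T^{1-b+b'}\|g\|_{X^{s,b'}_\o}.
\end{align*}
I apply it with $b=\frac12+\dl$ and $b'=-\frac12+2\dl$, which gives the power $T^{\dl}$. To prove it, I conjugate by $e^{-it\o(\dx)}$, which reduces everything to a one-dimensional statement in $t$ with $\xi$ fixed. Then I split $\ft g$ according to whether $|\tau-\o(\xi)|\le T^{-1}$ or $|\tau-\o(\xi)|> T^{-1}$.
\begin{itemize}
\item On the low-modulation piece, I Taylor expand $\frac{e^{it\lambda}-1}{i\lambda}$ and use the scaling $\|\eta_T t^k\|_{H^b_t}\les T^{k+\frac12-b}$. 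Summing the series gives the bound, after Cauchy--Schwarz in $\lambda$.
\item On the high-modulation piece, I write the Duhamel integral as $\frac{e^{it\lambda}-1}{i\lambda}$ times the data. The $e^{it\lambda}$ term is handled directly. The constant term is handled by the homogeneous estimate, combined with Cauchy--Schwarz, which needs $b'<-\frac12+$ margin.
\end{itemize}
Finally, I take the infimum over extensions of $g$ and pass to the restriction norms.

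\emph{Part (ii).} The first inequality is \eqref{Lpwt} applied after restricting to $[0,T]$. For the second, I work dyadically: it suffices to prove
\begin{align*}
\|\P_N u\|_{L^4_{T,x}}\les T^{\frac14-2\dl-}\|\P_N u\|_{X^{0,\frac12-2\dl}_{S;T}},
\end{align*}
then square and sum over $N$, since the projections $\P_N$ commute with the norms.

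The endpoint input comes from the Strichartz estimate $\|S(t)\phi\|_{L^4_tL^\infty_x}$, or more simply $L^4_{t,x}\subset L^8_tL^4_x$. Via the $X^{s,b}$ transference from \cite{TAObook}, this gives $\|u\|_{L^4_{t,x}}\les\|u\|_{X^{0,\frac12+}_S}$. On the other side, Sobolev embedding in time gives $\|u\|_{L^4_tL^2_x}\les\|u\|_{X^{0,\frac14}_S}$.

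Interpolating between $X^{0,\frac12+}$ into $L^4_{t,x}$ and the trivial bound $X^{0,0}=L^2_{t,x}$ yields $L^p_tL^4_x$-type bounds. More effectively, I localize in time: with $\eta_T$ a cutoff, Hölder in $t$ gives
\begin{align*}
\|\eta_T u\|_{L^4_{t,x}}\le T^{\frac14-\frac1q}\|u\|_{L^q_tL^4_x},
\end{align*}
where $q$ is chosen so that $X^{0,\frac12-2\dl}$ embeds into $L^q_tL^4_x$. That embedding follows by interpolating the Strichartz transference $X^{0,\frac12+}\subset L^8_tL^4_x$ with Sobolev in time, $X^{0,b}\subset L^{\frac{2}{1-2b}}_tL^2_x$, and then Gagliardo--Nirenberg in $x$. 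The main obstacle is bookkeeping the exponents so that the power comes out to exactly $T^{\frac14-2\dl-}$. I will check this at the endpoints: $\dl=0$ should give $T^{0+}$ (up to $\eps$) from $X^{0,\frac12}$ into $L^8_tL^4_x$, and $\dl=\frac18$ corresponds to $b=\frac14$ with $L^4_tL^2_x$-type control. Linear interpolation in $b$ between these then gives the stated power. The remaining step is to take the infimum over extensions.
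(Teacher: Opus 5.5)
\textbf{Part (i).} The paper gives no proof of this lemma; it only points to \cite{MP}. Your argument for part (i) is the standard one and is essentially fine, with two small corrections.
\begin{itemize}
\item In the high-modulation constant term, the condition you need is $b'>-\frac12$, which is what makes $\int_{|\ld|>T^{-1}}|\ld|^{-2-2b'}\,d\ld$ finite. It is not ``$b'<-\frac12+$ margin''.
\item Your hypothesis $b'\le 0$ only covers $\dl\le\frac14$. For $\frac14<\dl<\frac12$, apply the $b'=0$ case to $\eta_{4T}g$ and use $\|\eta_{4T}g\|_{X^{s,0}_{\o}}\les T^{b'}\|g\|_{X^{s,b'}_{\o}}$, valid for $0\le b'<\frac12$. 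The exponent is still $1-b+b'=\dl$.
\end{itemize}

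\textbf{Part (ii): the genuine gap.} The gap is the step where you assert that interpolating in $b$ produces $T^{\frac14-2\dl-}$. No argument can do this, because \eqref{L4Strich} is false as written.

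First example. Fix a Schwartz function $\phi\neq 0$ and set $u=S(t)\phi_\ld$ with $\phi_\ld=\ld^{\frac12}\phi(\ld\,\cdot)$ and $\ld=T^{-\frac12}$. Scaling gives $\|u\|_{L^4_{T,x}}=T^{\frac18}\big(\int_0^1\|S(t)\phi\|_{L^4_x}^4\,dt\big)^{\frac14}$. The extension $\eta(t/T)u$ shows $\|u\|_{X^{0,b}_{S;T}}\les\|\eta(\cdot/T)\|_{H^b_t}\|\phi\|_{L^2}\les T^{\frac12-b}$ for $0\le b\le\frac12$. With $b=\frac12-2\dl$, the right-hand side of \eqref{L4Strich} is therefore $\les T^{\frac14-}$, which is much smaller than $T^{\frac18}$ as $T\to0$.

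Second example. For $b<\frac38$, i.e.\ $\dl>\frac1{16}$, there is no bound $\|u\|_{L^4_{T,x}}\le C(T)\|u\|_{X^{0,b}_{S;T}}$ at all. Take $\ft u=\ind_{[-M,M]}(\xi)\ind_{[-M^2,M^2]}(\tau)$, translated in time into $[0,T]$. Then $|u|\ges M^3$ on a box of size $M^{-2}\times M^{-1}$, so $\|u\|_{L^4}\ges M^{\frac94}$. On the other hand $\|u\|_{X^{0,b}_S}\les M^{2b+\frac32}$, so the ratio blows up as $M\to\infty$.

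\textbf{Why your interpolation fails, and what holds.} Your own endpoints already show the problem.
\begin{itemize}
\item At $\dl=0$, the embedding $X^{0,\frac12+}\subset L^8_tL^4_x$ plus H\"older in time gives $T^{\frac18}$, not $T^{\frac14}$.
\item At $\dl=\frac18$ you only have $L^4_tL^2_x$, which does not control $L^4_{t,x}$.
\item Interpolating $L^8_tL^4_x$ with $L^{\frac{2}{1-2b}}_tL^2_x$, or with $L^2_{t,x}$, never reaches spatial $L^4$ below $b=\frac12+$.
\item Gagliardo--Nirenberg in $x$ would need spatial derivatives that $X^{0,b}$ does not provide.
\end{itemize}

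Presumably the intended estimate is the following, and it does hold.
\begin{itemize}
\item Interpolate the transferred $L^6_{t,x}$ Strichartz bound at $X^{0,\frac12+}_S$ with $X^{0,0}_S=L^2_{t,x}$, with parameter $\frac34$. This gives $\|u\|_{L^4_{t,x}}\les\|u\|_{X^{0,\frac38+}_S}$.
\item Combine this with the time-localization estimate $\|\eta_T u\|_{X^{0,b_1}_S}\les T^{b_2-b_1}\|u\|_{X^{0,b_2}_S}$ for $-\frac12<b_1\le b_2<\frac12$.
\item Apply the result to each $\P_N$ of a near-optimal extension and square-sum, as you planned.
\end{itemize}
This yields $\|u\|_{\wt{L^4_{T,x}}}\les T^{\frac18-2\dl-}\|u\|_{X^{0,\frac12-2\dl}_{S;T}}$ for $0\le\dl<\frac1{16}$. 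The first example shows this is sharp up to the $\eps$-loss, and it still gives a positive power of $T$ for the small $\dl$ used later in the paper.
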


Now we discuss properties of the time-localised spaces $Z^{s}$ and $Y^s$. 
We recall from \cite[Lemma A.1 and Remark A.2]{BOP} that for the norm $\|\cdot \|_{U^{2}_{S;I}L^2_x}$ defined via \eqref{localspace},
the infimum is achieved by the extension $\wt{u}= \ind_{I} (t) u$. It then follows from \eqref{Upembed} that for any interval $I=[a,b)$, it holds that 
\begin{align}
\| u\|_{L^{\infty}_{I}L^2_x} & \les \| \ind_{I}(t) u\|_{L^{\infty}(\R;L^2_x)} \les \| \ind_{I}(t) w\|_{U^{2}_{S}L^2_x} \sim \|u\|_{U^{2}_{S;I}L^2_x}. \label{LinftyU2}
\end{align}

By a similar argument as in \eqref{LinftyU2}, we have
\begin{align}
\| u\|_{L^{\infty}_{I}H^s_{x}} \les \| u\|_{Z^{s}_{T}}. \label{LinftyZs}
\end{align}
Thus, for the time localised spaces, the following embeddings hold: for $b>\frac 12$,
\begin{align}
X_{\o;T}^{s,b} \subset C([0,T);H^{s}(\R)) \quad \text{and} \quad Z_T^s \subset L^{\infty}([0,T);H^{s}(\R)) \label{YsCTHs}
\end{align}
We also define the norm $N^{s}(I)$ by
\begin{align*}
\| F\|_{N^{s}(I)} := \bigg\| \int_{0}^{t} S(t-t') F(t')dt' \bigg\|_{Z^{s}(I)}.
\end{align*}
We will use the resolution space $X^{s+\frac 12,\frac 12+}_{S; T}$ for the Schr\"{o}dinger part and $Z^{s}_{T}\cap C([0,T);H^{s}(\R))$ for the gauged variable for the BO part of \eqref{SBO}, namely, in the Banach subspace of continuous functions in time within $Z^{s}_{T}$.
To this end, we need the following linear estimates from \cite{U2V22, U2V23}.

\begin{lemma}\label{LEM:linZsb}
Let $s\geq 0$ and $0<T\leq \infty$. Then, we have 
\begin{align*}
 \|S(t)\phi\|_{Z^{s}([0,T))} \leq \|\phi\|_{H^s} \quad \text{and}\quad
 \|F\|_{N^{s}([0,T))}  \leq \sup_{  \substack{ h\in Y^{-s}([0,T)) \\ \|h\|_{Y^{-s}([0,T))}=1 }  } \bigg|\int_{0}^{T} \int_{\R} F(t,x) \cj{ h(t,x)} dx dt\bigg|
\end{align*}
for all $\phi\in H^s(\R)$ and $F\in L^1([0,T);H^{s}(\R))$.
\end{lemma}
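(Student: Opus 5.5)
The plan is to prove both estimates block by block in frequency. On each block $\P_N$ I would use the atomic structure of $U^2_S$ and the $U^2$–$V^2$ duality of Hadac–Herr–Koch \cite{U2V22, U2V23}. The dyadic pieces are then reassembled in $\l^2_N$ with the weights $N^{\pm s}$ that appear in the definitions of $Z^s$ and $Y^{-s}$.

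\emph{Free evolution.} Fix $N$. By the definition \eqref{localspace} of the time-localised norm, it suffices to exhibit one extension of $\P_N S(t)\phi$ from $[0,T)$. I take $\wt u(t)=\ind_{[0,T)}(t)\,S(t)\P_N\phi$. Then
\begin{align*}
S(-t)\wt u(t)=\ind_{[0,T)}(t)\,\P_N\phi .
\end{align*}
This is $\|\P_N\phi\|_{L^2}$ times a $U^2$-atom, built from the partition $t_0=0$, $t_1=T$ and the single profile $\phi_0=\P_N\phi/\|\P_N\phi\|_{L^2}$. Hence $\|\P_NS(t)\phi\|_{U^2_{S;[0,T)}L^2}\le\|\P_N\phi\|_{L^2}$, with constant exactly $1$. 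Squaring, multiplying by $N^{2s}$ and summing gives
\begin{align*}
\|S(t)\phi\|_{Z^s([0,T))}^2\le\sum_N N^{2s}\|\P_N\phi\|_{L^2}^2 .
\end{align*}
The last step is to bound the right-hand side by $\|\phi\|_{H^s}^2$. For this I use $\sum_N\psi_N^2\le 1$ together with the comparison of $N$ with $\jb{\xi}$ on the support of $\psi_N$. I read the dyadic normalisation in \eqref{ZsT} as being arranged so that this comparison holds without a constant. Otherwise the inequality holds only up to a harmless factor depending on $s$, and I would flag this.

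\emph{Duhamel term.} For a single block I invoke the duality theorem of \cite{U2V22}: for $f\in L^1([0,T);L^2)$,
\begin{align*}
\Big\|\int_0^t S(t-t')f(t')dt'\Big\|_{U^2_{S;[0,T)}L^2}=\sup_{\|g\|_{V^2_{S}L^2}=1}\Big|\int_0^T\!\!\int_{\R}f\,\cj g\,dx\,dt\Big| ,
\end{align*}
where the supremum may be taken over right-continuous $g$ supported in $[0,T)$, using \eqref{Indbdd}. I apply this with $f=\P_NF$ and set $a_N:=\|\P_N\int_0^tS(t-t')F\|_{U^2_S L^2}$. For each $N$ I pick a near-maximiser $g_N$ and replace it by $\wt\P_Ng_N$. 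This replacement does not change the pairing, since $\int\P_NF\,\cj{g_N}=\int F\,\cj{\P_Ng_N}$, and the resulting function is frequency-localised near $N$.

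\emph{Assembling the blocks.} I then set
\begin{align*}
h=\Big(\sum_M M^{2s}a_M^2\Big)^{-\frac12}\sum_N N^{s}a_N\,\P_Ng_N .
\end{align*}
With this choice, $\int\!\!\int F\cj h$ recovers $\|F\|_{N^s([0,T))}$ up to the near-optimality slack, and that slack can be sent to zero. It remains to check $\|h\|_{Y^{-s}}\le1$. I compute $\P_Kh$, which receives contributions only from $N\in\{K/2,K,2K\}$, and control $\|\P_Kh\|_{V^2_S L^2}$ by the $V^2$ norms of these neighbours, using that $\P_K$ commutes with $S(t)$ and is contractive on $V^2_S$.

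I expect this overlap of adjacent Littlewood–Paley blocks to be the main obstacle in reaching the sharp constant $1$. Naively it costs a bounded factor. To avoid it, I would first try to choose $g_N$ so that its spatial Fourier transform is supported where $\psi_N$ is supported, and write $\P_Kh$ as a convex-type combination of these pieces. The aim is to use $\sum\psi_N^2\le1$ together with the triangle inequality in $V^2$ weighted by $N^{-s}\sim K^{-s}$. If exact constant $1$ cannot be reached this way, the argument still gives the estimate up to an absolute constant, which is all that is used later in the contraction argument.
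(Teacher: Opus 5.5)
The paper gives no proof of this lemma; it quotes it from \cite{U2V22, U2V23}. There the Duhamel bound comes from the $U^2$--$V^2$ duality of Hadac--Herr--Koch applied frequency by frequency, followed by $\l^2$-duality across frequencies. That is your scheme, and your free-evolution argument is fine, since the extension $\ind_{[0,T)}S(t)\phi$ serves all blocks at once. However, the assembly step fails as written, because the weight in $h$ is wrong.

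Since $\int\!\!\int F\,\cj{\P_N g_N}=\int\!\!\int \P_NF\,\cj{g_N}\approx a_N$, your $h$ gives
\begin{align*}
\int\!\!\int F\,\cj{h}\,dx\,dt\approx \frac{\sum_N N^{s}a_N^2}{\big(\sum_M M^{2s}a_M^2\big)^{\frac12}}\le \Big(\sum_N a_N^2\Big)^{\frac12}
\end{align*}
by Cauchy--Schwarz. This is the $s=0$ quantity, and it misses $\|F\|_{N^s([0,T))}$ by a factor of order $N^{s}$ when $F$ lives at frequency $N$.

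The coefficient must be $N^{2s}a_N$. Take $h=(\sum_M M^{2s}a_M^2)^{-\frac12}\sum_N N^{2s}a_N\,\P_Ng_N$. The pairing is then $\approx(\sum_N N^{2s}a_N^2)^{\frac12}$. Your neighbour argument gives $\|\P_Kh\|_{V^2_SL^2}\le(\sum_M M^{2s}a_M^2)^{-\frac12}\sum_{N\in\{K/2,K,2K\}}N^{2s}a_N$, hence $\|h\|_{Y^{-s}}\les_s 1$ after weighting by $K^{-2s}$ and summing.

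You should also pin down $a_N$. The $Z^s([0,T))$ norm is an infimum over extensions of the whole function, so blockwise infima do not suffice. Set $a_N:=\|\P_Nu_T\|_{U^2_SL^2}$ for the single extension $u_T(t)=\int_0^tS(t-t')\ind_{[0,T)}(t')F(t')\,dt'$. For $u_T$ the duality of \cite{U2V22} holds on all of $\R$. You then only need $\|F\|_{N^s([0,T))}\le(\sum_N N^{2s}a_N^2)^{\frac12}$, not the equality you wrote for the localised norm. Truncate to finitely many $N$ if finiteness of this sum is not known a priori.

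Your doubt about the constant $1$ is justified; do not pursue the ``convex-type combination'' idea. Since $\|u\|_{L^\infty_tL^2_x}\le\|u\|_{U^2_SL^2}$, every extension $\wt u$ of $S(t)\phi$ satisfies $\|\P_N\wt u\|_{U^2_SL^2}\ge\|\P_N\phi\|_{L^2}$. Hence $\|S(t)\phi\|_{Z^s([0,T))}^2=\sum_N N^{2s}\|\P_N\phi\|_{L^2}^2$ exactly.

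Now take $|\xi|=2N-1$. There $\psi_{2N}(\xi)=1-\eta(2-\frac1N)=1-O(N^{-10})$, by flatness of $\eta$ at $2$. So $\sum_M M^{2s}\psi_M(\xi)^2\ge(2N)^{2s}(1-O(N^{-10}))>(1+(2N-1)^2)^{s}$ for $N$ large and $s>0$. Thus, with the paper's smooth $\P_N$, the first estimate is false with constant $1$ when $s>0$. The sharp constant in \cite{U2V23} comes from sharp, mutually orthogonal Fourier projections. Section~\ref{SEC:LWP} uses the lemma only up to constants, so the $s$-dependent constants your corrected argument gives are enough.
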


Lastly, in order to perform a continuity argument later, we need to understand the continuity in time 
of the time restricted norms. We recall the following from \cite[Lemma A.8]{BOP}.

\begin{lemma} \label{LEM:Zcts}
Let $s\in \R$ and $I=[a,b)\subset \R$. Given $v\in Z^{s}(I)\cap C(I;H^{s}(\R))$, the mapping $t\in I\mapsto \|v\|_{Z^{s}([a,t))}$ is continuous.
\end{lemma}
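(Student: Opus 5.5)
This result is quoted from \cite[Lemma A.8]{BOP}, so the plan is to reproduce its short argument in our setting. Throughout, $v\in Z^{s}(I)\cap C(I;H^{s}(\R))$, and we use $\|v\|_{Z^{s}([a,t))}$ with the restriction norm \eqref{localspace}.

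\textbf{Step 1 (reduction to one frequency block).} The restriction norm is realised by $\ind_{[a,t)}v$ in each dyadic block, as recalled before \eqref{LinftyU2}. Hence
\begin{align*}
\|v\|_{Z^{s}([a,t))}^2=\sum_{N}N^{2s}\|\ind_{[a,t)}\P_N v\|_{U^2_S L^2}^2 .
\end{align*}
By \eqref{Indbdd}, each summand is nondecreasing in $t$ and bounded by $N^{2s}\|\ind_{I}\P_N v\|_{U^2_SL^2}^2$. These bounds are summable because $v\in Z^s(I)$. Dominated convergence for series therefore reduces continuity of $t\mapsto\|v\|_{Z^s([a,t))}$ to continuity of the single map $g_N(t):=\|\ind_{[a,t)}\P_N v\|_{U^2_SL^2}$ for each fixed $N$.

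\textbf{Step 2 (continuity of $g_N$ via an atomic approximation).} Set $u=S(-t)\P_N v$, which is continuous into $L^2$ because $v\in C(I;H^s)$. Monotonicity gives $g_N(t)\le g_N(t')$ for $t<t'$. For the reverse bound, note that $\ind_{[a,t')}u-\ind_{[a,t)}u=\ind_{[t,t')}u$, so
\begin{align*}
g_N(t')-g_N(t)\le \|\ind_{[t,t')}u\|_{U^2L^2}.
\end{align*}
It remains to show this tends to $0$ as $|t'-t|\to0$. Fix $\eps>0$ and take an atomic decomposition $\ind_I u=\sum_j\ld_j a_j$ with $\sum_j|\ld_j|\le\|\ind_I u\|_{U^2}+\eps$. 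Choose a finite truncation $w$ that is a step function with finitely many jumps and satisfies $\|\ind_I u-w\|_{U^2}<\eps$. Then
\begin{align*}
\|\ind_{[t,t')}u\|_{U^2}\le\|\ind_{[t,t')}(\ind_Iu-w)\|_{U^2}+\|\ind_{[t,t')}w\|_{U^2}\le\eps+\|\ind_{[t,t')}w\|_{U^2},
\end{align*}
using \eqref{Indbdd} for the first term. When $[t,t')$ contains no jump point of $w$, the restriction $\ind_{[t,t')}w$ is a single constant $\phi$ times an interval indicator, so its $U^2$ norm is at most $\|\phi\|_{L^2}\le\|w(t)\|_{L^2}$.

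\textbf{Step 3 (the obstacle: controlling $\|w(t)\|_{L^2}$).} This last bound is not small, and it is the main difficulty. I would handle it by comparing with the function $u(t)$ itself. The function $u$ is continuous, so on a short interval $u$ is close in $L^\infty L^2$ to the constant $u(t)$. This gives
\begin{align*}
\ind_{[t,t')}u=\ind_{[t,t')}u(t)+\ind_{[t,t')}\big(u-u(t)\big).
\end{align*}
I would first try to bound the oscillation term using the quadratic-variation characterisation $V^2\subset U^p$ for $p>2$, together with the fact that $V^2$ norms of $\ind_{[t,t')}u$ shrink as $t'\to t$ for right-continuous $u$. The catch is that this does not directly give $U^2$ smallness. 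So I would instead argue by duality: write $U^2$ as the dual of $V^2$ (Lemma~\ref{LEM:linZsb}-type duality), and pair the oscillating piece against $V^2$ test functions, where continuity of $u$ and dominated convergence yield smallness.

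One further point needs care. The constant piece $\ind_{[t,t')}u(t)$ has norm $\|u(t)\|_{L^2}$, which does not tend to $0$. So in Step 2 one should really compare $g_N(t')$ with $g_N(t)$ through the concatenation $\ind_{[a,t)}u+\ind_{[t,t')}u$ rather than through a crude triangle inequality. Precisely, I would use that an atomic decomposition for $\ind_{[a,t)}u$ can be extended by the single step $u(t)\ind_{[t,t')}$. This costs only $O(\|u(t)\|^2)$ in the squared $\ell^2$ sum inside the atom rather than in $\ell^1$, because the new step simply lengthens the last step of each atom. The resulting perturbation then vanishes as $t'\downarrow t$. Left continuity is handled symmetrically. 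This refined concatenation argument, rather than the naive triangle inequality, is where I expect the real work of \cite[Lemma A.8]{BOP} to lie.
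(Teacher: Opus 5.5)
The paper gives no argument of its own here (it cites \cite[Lemma A.8]{BOP}), so your sketch has to stand on its own. Step 1 is fine, and the Step 2 triangle inequality is a dead end, as you noticed, since $\|\ind_{[t,t')}u\|_{U^2}\ge\|u(t)\|_{L^2}$. You do name the right mechanism (lengthening the last step of each atom), but Step 3 does not close, because two pieces are mishandled.

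\emph{The oscillation term.} The duality route does not give what you need. The duality goes the other way, $(U^2)^*=V^2$. The norming identity $\|f\|_{U^2}=\sup_{\|h\|_{V^2}\le 1}|B(f,h)|$ does hold by Hahn--Banach. But showing $B(\ind_{[t,t')}(u-u(t)),h)\to0$ for each fixed $h$ by dominated convergence only gives weak-$*$ convergence, not smallness uniform over the unit ball. The right tool is the one you already set up in Step 2, applied to $u-u(t)$ rather than to $u$. Let $w$ be a finite atomic sum with $\|\ind_I u-w\|_{U^2}<\eps$, and take $t'$ so close to $t$ that $w$ has no jump in $(t,t')$. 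Then $\ind_{[t,t')}(u-u(t))=\ind_{[t,t')}(\ind_I u-w)+\ind_{[t,t')}(w(t)-u(t))$, and both pieces have $U^2$-norm $<\eps$ (the second because $\|\cdot\|_{L^\infty_tL^2}\le\|\cdot\|_{U^2}$).

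\emph{The concatenation step.} Lengthening the last step of each atom costs \emph{nothing}: no new coefficient enters any atom, so every $\ell^2$-sum is unchanged. An ``$O(\|u(t)\|^2)$ cost that vanishes as $t'\downarrow t$'' is therefore not coherent. What you must check, and omit, is that the lengthened series equals $u(t)$ on $[t,t')$. Write $\ind_{[a,t)}u=\sum_j\ld_j a_j$ with $\sum_j|\ld_j|\le g_N(t)+\eps$ and atoms cut off to $[a,t)$. On $[t,t')$ the lengthened series equals $\sum_j\ld_j a_j(t^-)$. Because the series converges uniformly in $L^\infty_tL^2$, this equals $\lim_{\tau\uparrow t}u(\tau)$, and that is $u(t)$ \emph{only because $u$ is continuous}. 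This is the one place the hypothesis $v\in C(I;H^s)$ enters, and it is essential: a jump at $t$ genuinely makes $g_N$ discontinuous. You then get $\|\ind_{[a,t)}u+\ind_{[t,t')}u(t)\|_{U^2}\le g_N(t)$, hence
$g_N(t)\le g_N(t')\le g_N(t)+\|\ind_{[t,t')}(u-u(t))\|_{U^2}\to g_N(t)$. Left continuity follows the same way: lengthen at $t''<t$, and use that $w$ is constant on some $[t-\dl,t)$.

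Finally, the statement as written fails at $t=a$. There $\|v\|_{Z^s([a,a))}=0$, whereas the argument above gives $\|v\|_{Z^s([a,t))}\to\big(\sum_N N^{2s}\|\P_N v(a)\|_{L^2}^2\big)^{1/2}$ as $t\downarrow a$. Continuity holds on $(a,b)$, which is all the paper uses, via $\limsup_{T\to0^+}$; you should restrict your claim accordingly.
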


\begin{remark}\label{RMK:Ncts} \rm
We also recall that the mapping $T\mapsto \| \ind_{[0,T)} F\|_{X^{s,b'}}$ for any $b\leq 0$, $s\in \R$, and smooth $F$ is continuous. For instance see \cite[Remark 3.6]{CLOP} in the case $[0,T]$, which easily applies to the case of open right end-point $[0,T)$. \end{remark}

%


\subsection{Gauge transformation} \label{SEC:Gauge}

Following \cite{BP, MP}, 
we define the gauged variables 
\begin{align}
W := \P_{+,\text{hi}}[ e^{-i  \rho F[u,v]} ] \qquad \text{and} \qquad w:=\dx W, 
\label{gauge}
\end{align}
where $F[u,v]$ denotes the primitive of $v$ as in \eqref{F} and satisfying \eqref{Feqn}.
Before we state the equation that $w$ satisfies, we first note that we will need to also make modifications to the nonlinearity in the Schr\"{o}dinger part of the system \eqref{SBO}.
To this end, we recall the recovery formula from \cite{MP}: 
\begin{align*}
v= \dx F = e^{i\rho F} e^{-i\rho F} \dx F  =\tfrac{i}{\rho} e^{i\rho F} w + e^{i\rho F} \P_{-,\text{hi}}[ e^{-i\rho F}v] + e^{i\rho F} \Pblo[ e^{-i\rho F} v],
\end{align*}
so that 
\begin{align}
\PbHIp v = \tfrac{i}{\rho} \PbHIp[ e^{i\rho F} w] + \PbHIp[ \Pbhip e^{i\rho F}   \P_{-,\text{hi}}(e^{-i\rho F}v)]  +\PbHIp [ e^{i\rho F} \Pblo( e^{-i\rho F} v)]. 
\label{recovery1}
\end{align}
In order to simplify the first term on the right-hand side of \eqref{recovery1}, we write
\begin{align}
\PbHIp v = \tfrac{i}{\rho} e^{i\rho F} \PbHI w + R(v,w), \label{recovery2}
\end{align}
where 
\begin{align}
\begin{split}
R(v,w) & = \tfrac{i}{\rho} \PbLO[ e^{i\rho F} w] +\tfrac{i}{\rho} \P_{-,\text{HI}}[ e^{i\rho F} w] + \tfrac{i}{\rho} e^{i\rho F} \PbLO w \\
& \hphantom{X} + \PbHIp[ \Pbhip e^{i\rho F} \cdot  \P_{-,\text{hi}}(e^{-i\rho F}v)]  +\PbHIp [ e^{i\rho F} \Pblo( e^{-i\rho F} v)].
\end{split} \label{R}
\end{align}
We will use the decomposition \eqref{recovery2} to rewrite the nonlinearity of the Schr\"{o}dinger part $uv$ in \eqref{SBO} to observe finer smoothing properties. See \eqref{ueq} below.

\begin{lemma}\label{LEM:gaugeeqns}
Let $T>0$, $(u,v) \in C([0,T);H^{\infty}(\R))^2$ be a smooth solution of~\eqref{SBO}.
Then, the variables $(u,w)$ defined in \eqref{gauge} satisfy:
\begin{align}
\dt w +i\dx^2 w &= 2\rho \dx \Pbhip[ (\dx^{-1}w) \P_{-}\dx v] \notag  \\
&\hphantom{X}+2\rho \dx \Pbhip[ \Pblo(e^{-i\rho F}) \P_{-}\dx v]-i\rho\dx  \Pbhip[ e^{-i\rho F} |u|^2]
\label{weqn},\\
\dt u - i\dx^2 u &= -\tfrac{1}{\rho} e^{-i\rho F} u\cj{\PbHI w}- i u\cj{R(v,w)} -i u\PbLO  v -i u \P_{+,\textup{HI}} v -i \be |u|^2 u. \label{ueq}
\end{align}
\end{lemma}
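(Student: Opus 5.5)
The proof is a direct computation for smooth solutions, done in two parts: first the equation for $W$ (hence $w=\dx W$), then the rewriting of the Schrödinger nonlinearity.

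\emph{Equation for $w$.} Set $E:=e^{-i\rho F}$. Using \eqref{Feqn}, i.e.\ $\dt F = \H\dx^2F - \rho v^2 + |u|^2$ with $\dx F=v$, we get
\begin{align*}
\dt E = -i\rho E\big(\H\dx v - \rho v^2 + |u|^2\big), \qquad
\dx^2 E = -i\rho E\,\dx v - \rho^2 E v^2 .
\end{align*}
Hence
\begin{align*}
\dt E + i\dx^2 E = \rho E\,\dx v - i\rho E\,\H\dx v - i\rho E|u|^2 .
\end{align*}
The $\rho^2 Ev^2$ terms cancel, which is the point of the gauge. Since $\H$ has symbol $-i\,\mathrm{sgn}(\xi)$, we have $-i\H\dx v = -\P_{+}\dx v + \P_{-}\dx v$. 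Therefore
\begin{align*}
\rho E(\dx v - i\H\dx v) = 2\rho E\,\P_{-}\dx v .
\end{align*}
Next, apply $\Pbhip$, which commutes with $\dt + i\dx^2$; this is legitimate since $\dx E\in L^2$ and $\Pbhip$ kills the frequency singularity at the origin. Decompose $E = \Pbhip E + \P_{-,\textup{hi}}E + \Pblo E$, understood via \eqref{PbloE}. The factor $\P_{-,\textup{hi}}E\cdot\P_{-}\dx v$ has Fourier support in $\{\xi<0\}$, so it is killed by $\P_{+}$. In the remaining factor, $\Pbhip E = W = \dx^{-1}w$. Applying $\dx$ then yields \eqref{weqn}. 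The only care needed is justifying the manipulation with $\Pblo E$: I would pair against test functions as in \eqref{argument}, or work with $\dx E$ throughout.

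\emph{Equation for $u$.} Split $v = \PbLO v + \P_{+,\textup{HI}}v + \P_{-,\textup{HI}}v$. Because $v$ is real, $\P_{-,\textup{HI}}v = \cj{\P_{+,\textup{HI}}v}$. Apply \eqref{recovery2} and conjugate:
\begin{align*}
\P_{-,\textup{HI}}v = -\tfrac{i}{\rho}e^{-i\rho F}\cj{\PbHI w} + \cj{R(v,w)} .
\end{align*}
Finally, multiply the $u$-equation of \eqref{SBO} by $-i$ to get $\dt u - i\dx^2 u = -iuv - i\be|u|^2u$, and substitute the splitting; this gives \eqref{ueq}.

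The identity \eqref{recovery2} itself must be verified from \eqref{recovery1}. This is bookkeeping: $\PbHIp = \Id - \PbLO - \P_{-,\textup{HI}}$. I would also double-check the signs and factors in $v = \tfrac{i}{\rho}e^{i\rho F}w + \dots$, namely that $\Pbhip(E v) = \tfrac{i}{\rho}\dx W$, which follows from $\dx E = -i\rho E v$.

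The main obstacle is purely the rigorous handling of the ill-defined low-frequency pieces of $e^{\pm i\rho F}$ and the sign and projection bookkeeping. The algebraic cancellation of $v^2$ is the substantive step and is immediate.
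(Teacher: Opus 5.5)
Your proposal is correct and is essentially the paper's proof. You compute $(\dt+i\dx^2)e^{-i\rho F}$ via \eqref{Feqn}, cancel the $\rho^2 v^2$ terms, reduce $\dx v-i\H\dx v$ to $2\P_{-}\dx v$, and drop $\Pbhip[\P_{-,\textup{hi}}e^{-i\rho F}\cdot\P_{-}\dx v]=0$. For \eqref{ueq} you conjugate \eqref{recovery2} using $\P_{-,\textup{HI}}v=\cj{\PbHIp v}$, exactly as the paper does.

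One small point your bookkeeping of \eqref{recovery2} will turn up: since $\PbHIp=\Id-\PbLO-\P_{-,\textup{HI}}$, the terms $\tfrac{i}{\rho}\PbLO[e^{i\rho F}w]$ and $\tfrac{i}{\rho}\P_{-,\textup{HI}}[e^{i\rho F}w]$ should enter $R(v,w)$ in \eqref{R} with minus signs. This is a harmless misprint in the paper and affects none of the estimates.
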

\begin{proof}
Using \eqref{Feqn}, we see that 
\begin{align*}
\dt W -\H \dx^2 W &  =\dt W + i\dx^2 W \\
& = -i\rho \Pbhip \Big[ e^{-i\rho F} \big\{ \dt F + i\dx^2 F +\rho (\dx F)^2 \big\} \Big] \\
& = 2\rho \Pbhip[ e^{-i\rho F} \P_{-}\dx v] -i\rho \Pbhip[ e^{-i\rho F} |u|^2] \\
& = 2\rho \Pbhip[ W \P_{-}\dx v] +2\rho \Pbhip[ \Pblo(e^{-i\rho F}) \P_{-}\dx v]-i\rho \Pbhip[ e^{-i\rho F} |u|^2],
\end{align*}
where in the last equality, we used that $\Pbhip[ \P_{-,\text{hi}}f \cdot \P_{-}g]=0$ for any $f,g$.
Lastly, \eqref{ueq} follows from using \eqref{recovery2} and that since $v$ is real-valued, $\P_{-\text{HI}}v= \cj{ \PbHIp v}$.
\end{proof}

\subsection{BO-part regularity properties}

First, we establish the $X^{s,b}$-regularity and $\wt{L^{4}_{T,x}}$-integrability of the BO part $v$ of a solution to SBO \eqref{SBO}. 

\begin{lemma}\label{LEM:CCMXsb}
Let  $s\geq 0$, $0<T\leq 1$, and $(u,v)$ be a $H^{\infty}(\R)\times H^{\infty}(\R) $-solution to \textup{SBO} \eqref{SBO} on $[0,T)$. Then, 
\begin{align}\label{vXsb}
\begin{split}
\sup_{0\leq \ta \leq 1} \|v\|_{X^{s-\ta,\ta}_{\textup{BO}; T}} 
&
\les 
\| v\|_{L^\infty_TH^s_x}+\|  v\|_{L^4_{T,x}}
\| J_x^{s} v\|_{L^4_{T,x}}
 +\|  u\|_{L^4_{T,x}}
\| J_x^{s} u\|_{L^4_{T,x}}
\end{split}
\end{align}
Moreover, for $0\leq s<\frac{7}{4}$, it holds that
\begin{align}
\| J^{s}_x v\|_{\wt{L^{4}_{T,x}}}  &\les  T^{\frac 14}\|v\|_{L^{\infty}_{T}L^2_x} + (1+\|v\|^{4}_{L^{\infty}_{T}L^2_x})\|w\|_{Y^{s}_{T}}  \notag \\
&\qquad +(1+\|v\|_{L^{\infty}_{T}L^2_x})^2 \big(  \|w\|_{Y^{0}_{T}} + T^{\frac 14}\|v\|_{L^{\infty}_{T}L^2_x}  \big) \|v\|_{L^{\infty}_{T}H^s_x}  
\label{vL4Hs} \\
\| v\|_{L^{\infty}_{T}H^s_x} & \les \|v_0\|_{L^2_x}+(1+\|v\|_{L^{\infty}_{T}H^{\max(0,s-1+)}_x}^4)\|w\|_{Z^{s}_{T}}+T(  \|u\|_{L^{\infty}_{T}L^2_x}^2 + \|v\|_{L^{\infty}_{T}L^2_x}^{2}) \notag  \\
& \qquad + (1+\|v\|_{L^{\infty}_{T}L^2_x})^4 \|v\|_{L^{\infty}_{T}L^2_x} \|v\|_{L^{\infty}_{T}H^s_x}.
\label{vL4Hs2}
\end{align}
\end{lemma}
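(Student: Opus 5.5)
The plan is to prove the three estimates separately, in each case reducing $v$ to the gauged variable $w$ through the recovery formula \eqref{recovery2}. Throughout I use that $v$ is real, so $v=\PbLO v+\PbHIp v+\cj{\PbHIp v}$, and it suffices to bound $\PbLO v$ and $\PbHIp v$.

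\emph{Estimate \eqref{vXsb}.} This should be a direct interpolation argument.
\begin{itemize}
\item[(a)] First choose an extension of $v$ from $[0,T)$ to all times, for instance by a smooth time cutoff of the Duhamel representation, as in \cite{MP}.
\item[(b)] At $\ta=0$ we simply have $\|v\|_{X^{s,0}_{BO;T}}\les \|v\|_{L^2_TH^s_x}\les \|v\|_{L^\infty_TH^s_x}$, using $T\leq 1$.
\item[(c)] At $\ta=1$ we have $\|v\|_{X^{s-1,1}_{BO;T}}\les \|v\|_{L^2_TH^{s-1}}+\|(\dt-\H\dx^2)v\|_{L^2_TH^{s-1}}$. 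By the equation, the second term equals $\|\dx(|u|^2-\rho v^2)\|_{L^2_TH^{s-1}}\les \|J^s(|u|^2)\|_{L^2_{T,x}}+\|J^s(v^2)\|_{L^2_{T,x}}$.
\item[(d)] Lemma~\ref{LEM:leib} with $p_j=q_j=4$ then gives $\|J^s(v^2)\|_{L^2}\les \|v\|_{L^4}\|J^sv\|_{L^4}$, and likewise for $|u|^2$.
\item[(e)] Interpolating between $\ta=0$ and $\ta=1$ gives the supremum over $0\le \ta\le1$.
\end{itemize}

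\emph{Estimate \eqref{vL4Hs}.}
\begin{itemize}
\item[(a)] The low part is handled by Bernstein and Hölder in time: $\|\PbLO v\|_{\wt{L^4_{T,x}}}\les T^{\frac14}\|v\|_{L^\infty_TL^2_x}$.
\item[(b)] For the high part I use \eqref{recovery2}, $\PbHIp v=\frac{i}{\rho}e^{i\rho F}\PbHI w+R(v,w)$.
\item[(c)] For $\P_N[e^{i\rho F}\PbHI w]$ I run a paraproduct decomposition.
 \begin{itemize}
 \item When $e^{i\rho F}$ is at frequency $\ll N$, argue exactly as in \eqref{MPeFg}--\eqref{PbloDsE}. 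The projection $\P_N$ then passes to $\wt\P_N w$. Applying \eqref{L4} bounds this piece by $N^s\|\wt\P_Nw\|_{V^2_SL^2}$, whose $\l^2_N$ sum is $\|w\|_{Y^s_T}$.
 \item When $e^{i\rho F}$ sits at frequency $\ges N$, put $\P_{N'}e^{i\rho F}$ in $L^\infty$ or $L^4$ and use Lemma~\ref{LEM:PNe}. This transfers $N^s$ onto $v$ or produces $N^{-1-}$ decay, giving the terms $(1+\|v\|_{L^2})^2\|w\|_{Y^0_T}\|v\|_{L^\infty H^s}$ and $\|v\|^4\|w\|_{Y^s}$.
 \item The restriction $s<\frac74$ enters here, because the available decay $N^{-2-\frac1q}$ with $q=4$ must beat $N^{s}$.
 \end{itemize}
\item[(d)] For $R(v,w)$ I treat the five terms of \eqref{R} one by one.
 \begin{itemize}
 \item The terms containing $\PbLO$ on an output or on $w$ are bounded by Bernstein or by the same paraproduct argument.
 \item $\P_{-,\textup{HI}}[e^{i\rho F}w]$ forces $e^{i\rho F}$ to carry frequency at least comparable to the output. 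It is therefore again controlled by Lemma~\ref{LEM:PNe}.
 \item $\PbHIp[\Pbhip e^{i\rho F}\,\P_{-,\textup{hi}}(e^{-i\rho F}v)]$ has a positive high output from a negative-frequency factor. Hence $\Pbhip e^{i\rho F}$ must carry frequency at least comparable to the output, which lets me move the derivatives onto it.
 \item The last term is treated similarly, with $\Pblo(e^{-i\rho F}v)$ bounded in $L^\infty$ by $\|v\|_{L^2}$.
 \end{itemize}
\end{itemize}

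\emph{Estimate \eqref{vL4Hs2}.}
\begin{itemize}
\item[(a)] For the low frequencies, write $\PbLO v(t)=\PbLO v_0+\int_0^t\PbLO[\H\dx^2v+\dx(|u|^2-\rho v^2)]$. Bernstein ($L^1\to L^2$) gives the bound $\|v_0\|_{L^2}+T(\|u\|_{L^\infty L^2}^2+\|v\|_{L^\infty L^2}^2)$, plus a harmless $T\|v\|_{L^\infty L^2}$, which is absorbable.
\item[(b)] For the high part, \eqref{recovery2} with Lemma~\ref{LEM:prodest} at $q=2$ gives $\|J^s[e^{i\rho F}\PbHI w]\|_{L^2}\les(1+\|v\|_{H^{\max(0,s-1)}}^2)\|w\|_{H^s}$. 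Combined with \eqref{LinftyZs} this produces $(1+\|v\|^4_{H^{\max(0,s-1+)}})\|w\|_{Z^s_T}$.
\item[(c)] The $R$ terms, estimated in $L^2$ as in \eqref{vL4Hs} via Lemma~\ref{LEM:PNe} with $q=2$, each carry at least one factor $\|v\|_{L^2}$ multiplying $\|v\|_{H^s}$. This gives the last term of \eqref{vL4Hs2}.
\end{itemize}

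The main obstacle I expect is the bookkeeping of the commutator $R(v,w)$ in $\wt{L^4_{T,x}}$ for the full range $s<\frac74$. One must check in each frequency configuration that the exponential carries enough decay from Lemma~\ref{LEM:PNe} to absorb $N^s$ while leaving at most one factor of $v$ at regularity $s$. My first attempt would be to track the worst case, where both $e^{i\rho F}$ and $v$ sit at the output frequency.
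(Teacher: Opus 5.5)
Your proposal follows the paper's proof. For \eqref{vXsb} it uses the same extension, equation and Leibniz argument. For \eqref{vL4Hs}--\eqref{vL4Hs2} it uses the same reduction of $\PbHIp v$ to $w$ through the recovery formula, followed by paraproduct estimates driven by Lemma~\ref{LEM:PNe} and Lemma~\ref{LEM:prodest}. The paper expands with \eqref{recovery1} rather than \eqref{recovery2}, which avoids the three $w$-terms of $R$. In your version those terms must go into the $\|w\|_{Z^s_T}$ term, not into $\|v\|_{L^2}\|v\|_{H^s}$ as you claim.

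Two small fixes are needed:
\begin{itemize}
\item The linear term $T\|v\|_{L^\infty_TL^2_x}$ you produce for $\PbLO v$ cannot be absorbed for every $T\leq 1$. Instead, write the Duhamel formula with the unitary group $e^{t\H\dx^2}$, which commutes with $\PbLO$, as the paper does; Gronwall also works.
\item The restriction $s<\frac74$ does not come from the main term, which only needs $s<2$. It comes from $\PbHIp[\Pbhip e^{i\rho F}\,\P_{-,\textup{hi}}(e^{-i\rho F}v)]$. There, putting $\P_{N_2}(e^{-i\rho F}v)$ into $L^\infty_x$ costs $N_2^{\frac12}\le N_1^{\frac12}$, and against the decay $N_1^{-\frac94}$ of $\P_{N_1}e^{i\rho F}$ this leaves $N^{s-\frac74}$.
\end{itemize}
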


\begin{proof}
We argue as in \cite[Proposition 3.2]{MP}. 
The main point is that for a suitable extension $\wt{v}$ on $[0,T]$ of $v$, it holds that 
\begin{align*}
\sup_{0\le \ta \le 1} 
\| \wt{v}\|_{X_{\text{BO}}^{ s-\ta, \ta}} \les \| \dt v -\H \dx^2 v\|_{L^{2}_{T}H^{ s-1}_x} + \|v\|_{L^{\infty}_{T}H^{ s}_x}.
\end{align*}
Inserting the equation for $v$ in \eqref{SBO} and using the fractional Leibniz rule, we obtain \eqref{vXsb}.

We move onto \eqref{vL4Hs}. As $v$ is real-valued, we have 
\begin{align}
\| J^{s}_x v\|_{\wt{L^{4}_{T,x}}} = 2\| J^{s}_x  \P_{+}v\|_{\wt{L^{4}_{T,x}}}  &\les \|\P_{+,\text{LO}}v\|_{L^4_T L^2_x} + \| D^{s}_x \PbHIp v\|_{\wt{L^{4}_{T,x}}}  \notag \\
& \les T^{\frac 14}\| \P_{+,\text{LO}} v\|_{L^{\infty}_{T}L^2_x}+ \| D^{s}_x \PbHIp v\|_{\wt{L^4_{T,x}}} . \label{vLpLq}
\end{align}
For the second term $D^{s}_x \PbHIp v$, we replace $ \PbHIp  v$ by \eqref{recovery1} and estimate each of the ensuing terms separately. 

To this end, let $N\gg 1$ be dyadic. By frequency considerations and the triangle inequality, we have 
\begin{align}
\| \P_{N}[ e^{i\rho F} w]\|_{L^{4}_{T,x}} & \les \|\wt{\P}_{N}w\|_{L^{4}_{T,x}} + \| \wt{\P}_{N}e^{i\rho F}\cdot \P_{\ll N}w\|_{L^{4}_{T,x}} + \|\P_{\ges N}e^{i\rho F} \cdot \P_{\gg N}w\|_{L^{4}_{T,x}}. \label{L4tx3}
\end{align}
By Bernstein's inequality, we have $\| \P_{\ges N}e^{i\rho F}\|_{L^{\infty}_{T,x}} \les N^{-\frac 12}\|v\|_{L^{\infty}_{T}L^{2}_{x}}$.
Therefore, with \eqref{L4}, we obtain
\begin{align*}
\|\P_{\ges N}e^{i\rho F} \cdot \P_{\gg N}w\|_{L^{4}_{T,x}}. \les N^{-\frac 12-s} \|v\|_{L^{\infty}_{T}L^2_x} \|J^{s}_x w \|_{L^{4}_{T,x}} \les N^{-\frac 12-s} \|v\|_{L^{\infty}_{T}L^2_x} \|w\|_{Y^{s}_{T}}.
\end{align*}
Next by \eqref{PNe1}, we 
\begin{align*}
\|& \wt{\P}_{N}e^{i\rho F}\cdot \P_{\ll N}w\|_{L^{4}_{T,x}}  \\
&\les \| \P_{\ll N} w\|_{ L^{4}_{T}L^{\infty}_{x}} 
\|\wt{\P}_{N}e^{i\rho F} \|_{L^{\infty}_{T}L^{4}_{x}}  \\
& \les  N^{\frac 14}\|w\|_{L^{4}_{T,x}}  
\big\{  N^{-2-\frac{1}{4}}\|v\|_{L^{\infty}_{T}L^{2}_x}^{4}  \\
& \hphantom{XXXXXXXXX}+ N^{-1-\frac 14} \|v\|_{L^{\infty}_{T}L^{2}_x} (1+\|v\|_{L^{\infty}_{T}L^{2}_x})\|\P_{\ges N}v\|_{L^{\infty}_{T}L^2_x} +N^{-1}\|\P_{N}v\|_{L^{\infty}_{T}L^4_x} \big\} \\
& \les \|w\|_{Y_{T}^{0}} \big\{ N^{-2} \|v\|_{L^{\infty}_{T}L^{2}_x}^{4}  + N^{-1-\max(0,s-1+)} \|v\|_{L^{\infty}_{T}L^{2}_x} (1+\|v\|_{L^{\infty}_{T}L^{2}_x})\|v\|_{L^{\infty}_{T}H^{\max(0,s-1+)}_x}\Big\} \\
& \quad + N^{-\frac 12-s}\|w\|_{Y^{0}_{T}} \| v\|_{L^{\infty}_{T}H^{s}_x}.
\end{align*}
Combining these estimates, we find that 
\begin{align}
\| J^{s}_x &\PbHIp[ e^{i\rho F} w]\|_{\wt{L^{4}_{T,x}}} \notag  \\
&\les (1+ \|v\|_{L^{\infty}_{T}L^2_x}) \|w\|_{Y^{s}_{T}} 
+ \|w\|_{Y^{0}_{T}}\big(  \|v\|_{L^{\infty}_{T}L^2_x}^4 +(1+ \|v\|_{L^{\infty}_{T}L^2_x})^2 \|v\|_{L^{\infty}_{T}H^s_x}    \big). \label{L4tx1}
\end{align}
Next, by H\"{o}lder, Bernstein, and \eqref{PNe1}, we have 
\begin{align}
 \| &\P_{N}\PbHIp[ \Pbhip e^{i\rho F} \cdot  \P_{-,\text{hi}}(e^{-i\rho F}v)]\|_{L^{4}_{T,x}} \notag  \\
 & \les  \sum_{N_1 \ges N\vee N_2} \|\P_{N_1}e^{i\rho F}\|_{L^{\infty}_{T}L^4_x} \|\P_{N_2}(e^{-i\rho F}v)\|_{L^{4}_{T}L^{\infty}_{x}} \notag \\
 & \les T^{\frac 14} \|v\|_{L^{\infty}_{T}L^2_x} \sum_{N_1 \ges N\vee N_2} N_{2}^{\frac 12}\big\{ N_1^{-\frac{9}{4}} \|v\|_{L^{\infty}_{T}L^2_x}^{4} +N_1^{-\frac{5}{4}-s} \|v\|_{L^{\infty}_{T}L^2_x}(1+\|v\|_{L^{\infty}_{T}L^2_x})  \|v\|_{L^{\infty}_{T}H^s_x} \notag \\
& \hphantom{XXXXXXXXXXXXXX}  + N^{-1-s} \|v\|_{L^{\infty}_{T}H^s_x}\big\} \notag \\
&\les T^{\frac 14} \|v\|_{L^{\infty}_{T}L^2_x} \big( \|v\|_{L^{\infty}_{T}L^2_x} ^4 + (1+\|v\|_{L^{\infty}_{T}L^2_x} )^2 \|v\|_{L^{\infty}_{T}H^s_x} \big). \label{L4tx2}
\end{align} 
A similar bound holds for the term $\PbHIp [ e^{i\rho F} \Pblo( e^{-i\rho F} v)]$. Combining \eqref{L4tx1} and \eqref{L4tx2} then proves \eqref{vL4Hs}.

We move onto \eqref{vL4Hs2}. As in \eqref{vLpLq}, we have 
\begin{align*}
\| v\|_{L^{\infty}_{T}H^s_x} & \les \|\P_{+,\text{LO}}v\|_{L^{\infty}_{T}L^2_x} + \|D^s_x \PbHIp v\|_{L^{\infty}_{T}L^2_x}.
\end{align*}
In the first term on the right-hand side above, we no longer gain a factor of $T$, so in order to complete the bootstrap argument later on, we need to obtain a higher-homogeneity of terms here.
To this end, we replace $v$ by the Duhamel formula:
\begin{align*}
\P_{+, \text{LO}}v(t) = S(t) \PbLO v_0 + \int_{0}^{t} S(t-t') \PbLO \dx( |u(t')|^2 -\rho v(t')^2)dt'.
\end{align*}
Then, by Bernstein's inequality, we have  
\begin{align*}
\| \P_{+, \text{LO}}v\|_{L^{\infty}_{T}L^2_x} \les \|v_0\|_{L^2_x} + T ( \|u\|_{L^{\infty}_{T}L^2_x}^2 + \|v\|_{L^{\infty}_{T}L^2_x}^{2}).
\end{align*}

By \eqref{eFg1} and \eqref{LinftyU2}, 
\begin{align*}
\| J^{s}_x [ e^{i \rho F}w]\|_{L^{\infty}_{T}L^2_x} \les (1 + \|v\|_{L^{\infty}_{T}H_x^{\max(0,s-1)}}^4) \| w\|_{L^{\infty}_{T}H^s_x} \les (1 + \|v\|_{L^{\infty}_{T}H_x^{\max(0,s-1)}}^4) \| w\|_{Z^{s}_{T}}.
\end{align*}
Next, by the fractional Leibniz rule, Bernstein's inequality and \eqref{PNe1}, we obtain
\begin{align*}
\| J^{s}\PbHIp[ \Pbhi e^{i\rho F} \cdot  \Pblo( e^{-i\rho F} v)]\|_{L^{\infty}_{T}L^{2}_x} 
&\les  \|J^{s}\Pbhi e^{i\rho F}\|_{L^{\infty}_{T}L^{2}_x}  \|v\|_{L^{\infty}_{T}L^2_x} \\
& \les  \|v\|_{L^{\infty}_{T}L^2_x} \sup_{t\in [0,T]}  \bigg(\sum_{N\geq 2} N^{s} \|\P_{N}e^{i\rho F}\|_{L^2_x} \bigg) \\
& \les  \|v\|^{5}_{L^{\infty}_{T}L^2_x} + \|v\|_{L^{\infty}_{T}L^2_x} \sup_{t\in [0,T]}\sum_{N\geq 2} N^{s-1}\|\P_{N}v\|_{L^2}   \\
& \quad + (1+\|v\|_{L^{\infty}_{T}L^2_x}) \|v\|_{L^{\infty}_{T}L^2_x}\sup_{t\in [0,T]} \sum_{N} N^{s-\frac{3}{2}} \|\P_{\ges N}v\|_{L^2}  \\
& \les  (1+\|v\|_{L^{\infty}_{T}L^2_x})^{4} \|v\|_{L^{\infty}_{T}L^2_x} \|v\|_{L^{\infty}_{T}H_x^{\max(0,s-1+)}}.
\end{align*}
Next, by a dyadic decomposition, Bernstein's inequality and \eqref{PNe1}, we have 
\begin{align*}
\|J_{x}^{s}\PbHIp[ \Pbhip e^{i\rho F}\cdot &\P_{-,\text{hi}}[e^{i\rho F}v]] \|_{L^{2}_{x}} 
 \les \sum_{N_1 \ges N\vee N_2} N^s \|\P_{N_1}e^{i\rho F}\|_{L^2_x} \|\P_{N_2}[ e^{i\rho F}v]\|_{L^{\infty}_x} \\
& \les \|v\|_{L^{2}_x}^{5} + \|v\|_{L^{2}_x} (1+\|v\|_{L^{2}_x})^2\sum_{N_1 \ges N\vee N_2}  N^{s}N_{2}^{\frac 12} N_{1}^{-1} \|\P_{\ges N_1}v\|_{L^{2}_x} \\
&\les \|v\|_{L^{2}_x}^{5} + \|v\|_{L^{2}_x}(1+\|v\|_{L^{2}_x})^2  \|v\|_{H^{s}_x}.
\end{align*}
Taking $L^{\infty}_{T}$ of both sides, we then obtain
\begin{align*}
\|J^{s}\PbHIp[ \Pbhip e^{i\rho F}\cdot \P_{-,\text{hi}}[e^{i\rho F}v]] \|_{L^{\infty}_{T}L^2_x} \les  (1+\|v\|_{L^{\infty}_{T}L^2_x})^4 \|v\|_{L^{\infty}_{T}L^2_x} \|v\|_{L^{\infty}_{T}H^s_x}.
\end{align*}
This completes the proof of \eqref{vL4Hs2}.
\end{proof}

We also have corresponding difference estimates.

\begin{lemma}\label{LEM:Xsbdiffs}
Let $0\leq s <\frac{3}{2}$ and $0<T\leq 1$. For $j=1,2$, let $(u_j,v_j) \in H^{\infty}(\R)\times H^{\infty}(\R)$ be two solutions to \eqref{SBO} on $[0,T)$. Denote by $w_j=w_j [u_j]$ the corresponding gauged functions given in \eqref{gauge}
 Let $U:=u_1-u_2$, $V:=v_1-v_2$, $W:=w_1-w_2$, $G:= e^{i\rho F_1}-e^{i\rho F_2}$, and for $0\leq \s\leq s$, $\V_{\s}:= \|v_1\|_{L^{\infty}_{T}H^{\s}_x}+ \|v_2\|_{L^{\infty}_{T}H^{\s}_x}$. Then, 
 \begin{align}
\sup_{0\leq \ta \leq 1} \|V\|_{X^{s-\ta,\ta}_{\textup{BO};T}}  \les & \|V\|_{L^{\infty}_{T}H^{s}_x} + \bigg( \sum_{j=1}^{2} \|v_j\|_{L^{4}_{T,x}}\bigg) \|J_x^{s}V\|_{L^{4}_{T,x}}  +\bigg( \sum_{j=1}^{2} \|u_j\|_{L^{4}_{T,x}}\bigg)\|J_x^{s}U\|_{L^{4}_{T,x}} \notag \\
& + \bigg( \sum_{j=1}^{2} \|J^{s}_{x} v_j\|_{L^{4}_{T,x}}\bigg) \|V\|_{L^{4}_{T,x}}+\bigg( \sum_{j=1}^{2} \|J^s_x u_j\|_{L^{4}_{T,x}}\bigg)\|U\|_{L^{4}_{T,x}} \label{Xsbdiff}
\end{align}
\begin{align}
 \| J^{s}_{x}V\|_{\wt{L^4_{T,x}}}  &\les    \|w_1\|_{Y^{s}_{T}} \big\{ \|V\|_{L^{\infty}_{T}L^2_x} + \V_{0}\|G\|_{L^{\infty}_{T,x}} \big\} +(1+\V_0)\|W\|_{Y^{s}_{T}} \notag \\
&\quad + (1+\V_{0})^3 \|V\|_{L^{\infty}_{T}H^s_x} + (1+\V_{0})^5 \V_{s} \|V\|_{L^{\infty}_{T}L^2_x} \notag \\
& \quad+ \V_{0}(1+\V_0)^6 \V_{s} \|G\|_{L^{\infty}_{T,x}} + \|W\|_{Y^{0}_{T}}\{ \V_0^4 +(1+\V_0)^2 \V_s\}.
\label{L4diff} \\
\|V\|_{L^{\infty}_{T}H^s_x}& \les  \|V(0)\|_{L^{2}_x} + T( \|u_1\|_{L^{\infty}_{T}L^2_x} +\|u_2\|_{L^{\infty}_{T}L^{2}_x}) \|U \|_{L^{\infty}_{T}L^2_x} + T\V_0 \|V\|_{L^{\infty}_{T}L^2_x} \notag \\
& \quad + (1+\V_{\max(0,s-1)})^3(\|V\|_{L^{\infty}_{T}H^{\max(0,s-1)}_x} + \|G\|_{L^{\infty}_{T,x}}) \|w_1\|_{Z^{s}_{T}} \label{LinftyHsdiff}  \\
&\quad  + (1+\V_{\max(0,s-1)})^4\|W\|_{Z^{s}_{T}} +\V_0 (1+\V_0)^6 \V_{s} \|G\|_{L^{\infty}_{T,x}} + (1+\V_0)^{5} \V_{s} \|V\|_{L^{\infty}_{T}L^2_x}.
\notag 
\end{align}
\end{lemma}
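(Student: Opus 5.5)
The plan is to mirror the proof of Lemma~\ref{LEM:CCMXsb} line by line, now applied to differences. Every nonlinear expression is rewritten as a telescoping sum, so that exactly one factor carries a difference ($U$, $V$, $W$ or $G$). Each such sum is then bounded by the single-solution estimates already available, or by their difference counterparts in Lemma~\ref{LEM:prodest} (\eqref{eFg2}) and Lemma~\ref{LEM:PNe} (\eqref{PNe2}).

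\emph{Estimate \eqref{Xsbdiff}.} We start from the equation satisfied by $V$:
\[
\dt V-\H\dx^2V=\dx\big(|u_1|^2-|u_2|^2\big)-\rho\,\dx\big((v_1+v_2)V\big),
\qquad |u_1|^2-|u_2|^2=U\cj{u_1}+u_2\cj{U}.
\]
As in \cite[Proposition 3.2]{MP}, a suitable extension satisfies
\[
\sup_{0\le\ta\le1}\|V\|_{X^{s-\ta,\ta}_{\textup{BO};T}}\les\|\dt V-\H\dx^2V\|_{L^2_TH^{s-1}_x}+\|V\|_{L^\infty_TH^s_x}.
\]
We insert the equation and apply the fractional Leibniz rule (Lemma~\ref{LEM:leib}) with the pair $L^4\times L^4\to L^2$. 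This yields exactly the four product terms in \eqref{Xsbdiff}.

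\emph{Estimate \eqref{L4diff}.} As in \eqref{vLpLq}, we split $V$ into $\P_{+,\textup{LO}}V$ and $\PbHIp V$. For the high part we use \eqref{recovery1} for each $v_j$ and subtract. The principal term splits as
\[
e^{i\rho F_1}w_1-e^{i\rho F_2}w_2=G\,w_1+e^{i\rho F_2}W.
\]
\begin{itemize}
\item The piece $e^{i\rho F_2}W$ is handled exactly as in \eqref{L4tx3}--\eqref{L4tx1}. This gives $(1+\V_0)\|W\|_{Y^s_T}$ together with the $\|W\|_{Y^0_T}\{\V_0^4+(1+\V_0)^2\V_s\}$ term.
\item The piece $G\,w_1$ is decomposed dyadically as in \eqref{L4tx3}. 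We take $\P_NG$ in $L^\infty_x$ or $L^4_x$ and bound it by \eqref{PNe2}. The low-frequency part of $G$ is controlled only by $\|G\|_{L^\infty_{T,x}}$, using the support argument \eqref{argument} to move the derivatives onto $w_1$. These contributions produce $\|w_1\|_{Y^s_T}\{\|V\|_{L^\infty_TL^2_x}+\V_0\|G\|_{L^\infty}\}$.
\end{itemize}
The two remainder terms of \eqref{recovery1} are treated as in \eqref{L4tx2}. We telescope in $e^{\pm i\rho F_j}$ and in $v_j$, using \eqref{PNe1} on undifferenced factors and \eqref{PNe2} on differenced ones. The constraint $s<\frac32$ ensures that the dyadic sums over $N_1\ges N\vee N_2$ converge.

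\emph{Estimate \eqref{LinftyHsdiff}.} This follows the proof of \eqref{vL4Hs2}. For the low-frequency part we use the Duhamel formula for $V$ together with Bernstein, which gives the $T$-terms. For the high part we again use \eqref{recovery1}, now with \eqref{LinftyZs}. The term $J^s[G\,w_1]$ is bounded via \eqref{eFg2}; this produces the factor $(\|V\|_{H^{\max(0,s-1)}}+\|G\|_{L^\infty})\|w_1\|_{Z^s_T}$. The term $e^{i\rho F_2}W$ is bounded via \eqref{eFg1}. The remaining pieces are bounded via \eqref{PNe2} in $L^2_x$, summed over dyadic blocks.

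\emph{Main obstacle.} The delicate step is the factor $G\,w_1$, together with the remainder terms, in the dyadic $\wt{L^4_{T,x}}$ norm. There the non-$L^2$ low-frequency part $\Pblo G$ is meaningful only through \eqref{PbloE}, so one must carefully track which factor receives the derivatives. The first tool to try is the identity \eqref{MPeFg} applied to $G$, combined with the high-frequency decay of \eqref{PNe2}, checking that every dyadic sum closes with the polynomial weights in $\V_0,\V_s$ stated in the lemma.
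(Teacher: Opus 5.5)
Your proposal is correct and follows the same route as the paper. You obtain \eqref{Xsbdiff} from the equation for $V$ via the extension bound of \cite[Proposition 3.2]{MP} and the fractional Leibniz rule. You obtain \eqref{L4diff} and \eqref{LinftyHsdiff} by differencing \eqref{recovery1} into terms with one factor of $G$, $V$ or $W$, splitting $G\,w_1$ as in \eqref{L4tx3}, and using \eqref{PNe2} and \eqref{eFg2} in place of \eqref{PNe1} and \eqref{eFg1} on the differenced factors, with the Duhamel formula for the low frequencies in \eqref{LinftyHsdiff}.

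The differences are cosmetic. For $\P_{\ges N}G\cdot\P_{\gg N}w_1$ the paper uses the direct Bernstein bound $\|\P_N G\|_{L^\infty_{T,x}}\les N^{-\frac12}(\|V\|_{L^\infty_TL^2_x}+\V_0\|G\|_{L^\infty_{T,x}})$ rather than \eqref{PNe2}, which is stated only for $q<\infty$. Also, in both your bookkeeping and the paper's, the piece $\P_N[G\,\wt{\P}_Nw_1]$ actually yields $\|G\|_{L^\infty_{T,x}}\|w_1\|_{Y^s_T}$ without the factor $\V_0$ claimed in \eqref{L4diff}; this is harmless for the later bootstrap.
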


\begin{proof}
First, \eqref{Xsbdiff} follows exactly as for \eqref{vXsb} upon taking the difference of two solutions $V$. 
Taking the differences in \eqref{recovery1} we see two types of terms: those with a difference of exponential factors (i.e. one occurrence of $G$) and those with one occurrence of $W$ or $V$. The latter terms are controlled exactly as in Lemma~\ref{LEM:CCMXsb}. For the former terms, we adapt the previous computations. For the term $\P_{N}\PbHIp[ G w_1]$, we split according to \eqref{L4tx3}. Unless we need $G$ to contribute smoothing, we always take it out of the norm gaining the factor $\|G\|_{L^{\infty}_{T,x}}$. For the contribution from the term $\P_{\ges N}G \cdot \P_{\gg N}w_1$, we use the following simple estimate to gain additional derivatives: 
\begin{align}
\|\P_{N}G\|_{L^{\infty}_{T,x}} \les N^{-\frac 12}\{ \|V\|_{L^{\infty}_{T}L^2_x} + \|v_1\|_{L^{\infty}_{T}L^2_x} \|G\|_{L^{\infty}_{T,x}}\big\}.
\label{Ginfty}
\end{align}
For the term $\wt{\P}_{N}G\cdot \P_{\ll N}w_1$, we \eqref{PNe2} instead of \eqref{PNe1}. Altogether we find 
\begin{align*}
\| J^{s}_x \PbHIp[ e^{i\rho F_1} w_1-e^{i\rho F_2} w_2]\|_{\wt{L^{4}_{T,x}}}
& \les \|w_1\|_{Y^{s}_{T}} \big\{ \|V\|_{L^{\infty}_{T}L^2_x} + \V_{0}\|G\|_{L^{\infty}_{T,x}} \big\} +(1+\V_0)\|W\|_{Y^{s}_{T}} \\
& + (1+\V_{0})^3 \|V\|_{L^{\infty}_{T}H^s_x} + (1+\V_{0})^5 \V_{s} \|V\|_{L^{\infty}_{T}L^2_x} \\
& + \V_{0}(1+\V_0)^6 \V_{s} \|G\|_{L^{\infty}_{T,x}} + \|W\|_{Y^{0}_{T}}\{ \V_0^4 +(1+\V_0)^2 \V_s\}.
\end{align*} 
Next, for the remaining terms in the difference of \eqref{recovery1}, we argue as in \eqref{L4tx2} with \eqref{PNe2} instead of \eqref{PNe1}. These considerations yield \eqref{L4diff}.

To obtain \eqref{LinftyHsdiff}, we argue similarly, using in addition \eqref{eFg2} and \eqref{PNe2}. 
\end{proof}

We now provide estimates on the remainder terms $R(v,w)$, which demonstrate an additional $\frac 12$-degree of smoothing as compared to $w$. In particular, obtaining exactly this amount of smoothing in the Strichartz spaces $L^{4}_{T,x}$ is necessary for our argument to handle the second term on the right hand side of \eqref{ueq}. See Lemma~\ref{LEM:SchrodingerII} for an application of the following estimates. In order to obtain exactly $\frac 12$-degree of smoothing and cover high values of $s$, we heavily rely on careful usage of Lemma~\ref{LEM:PNe}. 

\begin{lemma}\label{LEM:s12R}
Let $0\leq s<\frac 32$, $0<T\leq 1$. Let $w\in Z_T^{0}$ such that $\P_{+}w=w$, $F$ be a real-valued function such that $\dx F= v$ for $J^{s}_x v \in L^{\infty}_{T}L^2_x \cap \wt{L^{4}_{T,x}}$, and set $\V:=\|v\|_{L^{\infty}_{T}L^2_x}$. 
 Then,
\begin{align}
\begin{split}
\| J^{s+\frac 12}_x R(v,w) \|_{\wt{L^4_{T,x}}} & \les \|w\|_{Y^{0}_{T}} + T^{\frac{1}{12}} \|w\|_{Y^{0}_{T}}(1+\V)^4 \|v\|_{L^{\infty}_{T}H^{s}_x} \\
& + \V^{2} \|v\|_{L^{4}_{T,x}} + T^{\frac 12}(1+\V)\V \|v\|_{L^{\infty}_{T}H^s_x} + \V \|J^{s}_x v\|_{\wt{ L^4_{T,x}}}.
\end{split}  \label{Rbd}
\end{align}
Moreover, for $j=1,2,$ let  $ w_j\in Y^{0}_{T}$ such that $\P_{+}w_j=w_j$, $F_j$ be two real-valued functions such that $\dx F_j= v_j$ for $J^{s}_x v_j \in L^{\infty}_{T}L^2_x\cap \wt{L^{4}_{T,x}}$. Let $G:=e^{i\rho F_1}-e^{i\rho F_2}$, $V:=v_1-v_2$, and $W:=w_1-w_2$.  Then,
\begin{align}
\|& J^{s+\frac 12}_x[  R(v_1,w_1)-R(v_2,w_2)] \|_{\wt{L^4_{T,x}}}  \notag \\
& \les (1+\|w_{1}\|_{Y^{0}_{T}})\|v_2\|_{L^{\infty}_{T}L^{2}_x}\Big\{ \|J^{s}_x V\|_{\wt{L^{4}_{T,x}}} + \V_{12}\|J^{s}_x v_2\|_{\wt{L^{4}_{T,x}}} \|G\|_{L^{\infty}_{T,x}}  \notag  \\
&  \quad + \V_{12}^{3}(1+\V_{12})^3 \|v_2\|_{L^{4}_{T,x}} \big(  \|V\|_{L^{\infty}_{T}L^2_x} + \V_{12}\|G\|_{L^{\infty}_{T,x}} )    \big)  \notag \\
& \quad +(1+\V_{12})^{3} \big(  \|V\|_{L^{\infty}_{T}H^{s}_x} + \V_{12}\|v_2\|_{L^{\infty}_{T}H^{s}_x} \|G\|_{L^{\infty}_{T,x}} + ( \|v_1\|_{L^{\infty}_{T}H^{s}_x} +\|v_2\|_{L^{\infty}_{T}H^{s}_x}     ) \|G\|_{L^{\infty}_{T,x}} \big)  \Big\} \notag \\
&\quad +  \|w_1\|_{Y^{0}_{T}} \|G\|_{L^{\infty}_{T,x}} + \|W\|_{Y^{0}_{T}} + T^{\frac{1}{12}} \|W\|_{Y^{0}_{T}}( \V_{12}^{4}+(1+\V_{12})(\|v_1\|_{L^{\infty}_{T}H^{s}_x}+\|v_2\|_{L^{\infty}_{T}H^{s}_x}))  \notag\\
& \quad+ \V_{12}^{2} \|V\|_{L^{4}_{T,x}}  + \V_{12} \|v_2\|_{L^{4}_{T,x}}\|V\|_{L^{\infty}_{T}L^2_x}
\label{Rbddiff}
\end{align}
\end{lemma}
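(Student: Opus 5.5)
We treat the five pieces of $R(v,w)$ in \eqref{R} one at a time; by the triangle inequality it suffices to bound each term's contribution to $\|J^{s+\frac12}_x\cdot\|_{\wt{L^4_{T,x}}}$.

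\emph{The three terms involving $w$ and a low projection.}

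The term $\frac{i}{\rho}\PbLO[e^{i\rho F}w]$ is supported at frequencies $|\xi|\les 1$, so $J^{s+\frac12}_x$ acts boundedly. For $\frac{i}{\rho}e^{i\rho F}\PbLO w$, the low frequencies of $w$ vanish because $w=\dx\Pbhip[\cdot]$ with $N\ge 2$. Only a bounded band of dyadic pieces survives, and there $J^{s+\frac12}$ passes onto the exponential via \eqref{eFg1}. Both pieces are then bounded by $\|w\|_{L^4_{T,x}}\les\|w\|_{Y^0_T}$ using \eqref{L4}. Where a positive power of frequency is lost, we pay with $\|v\|_{L^\infty_TH^s}$, and we use H\"older in time from $L^6$ to $L^4$ to produce the factor $T^{\frac1{12}}$.

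The key term is $\frac{i}{\rho}\P_{-,\textup{HI}}[e^{i\rho F}w]$. Since $\P_+w=w$, a negative output frequency $-N$ forces the factor $e^{i\rho F}$ to carry frequency $\ges N$ while $w$ is at frequency $N_1\les N_2\sim$ (frequency of the exponential). We bound $\P_{N_2}e^{i\rho F}$ by \eqref{PNe1} in $L^\infty_TL^q_x$ and place $w$ in $L^4_TL^{r}_x$ with Bernstein, using \eqref{L4} for $\P_{N_1}w$. The three terms of \eqref{PNe1} then give, respectively:
\begin{itemize}
\item[(a)] $N^{-2}$-type decay times $\V^4$, which is summable;
\item[(b)] $N^{-1-\frac1q}\V(1+\V)\|\P_{\ges N}v\|_{L^2}$, which gives $N^{-1-s}\|v\|_{H^s}$;
\item[(c)] $N^{-1}\|\P_Nv\|_{L^q}$, which gives $N^{-1-s}$ times the $L^4$/Sobolev norm of $J^sv$.
\end{itemize}
Against the weight $N^{s+\frac12}$, each of these leaves at least $N^{-\frac12}$ to absorb the Bernstein loss $N_1^{\frac12-\frac2r}$ of $w$. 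This produces the terms $T^{\frac1{12}}\|w\|_{Y^0}(1+\V)^4\|v\|_{H^s}$ in \eqref{Rbd}.

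\emph{The two terms without $w$.}

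For $\PbHIp[\Pbhip e^{i\rho F}\cdot\P_{-,\textup{hi}}(e^{-i\rho F}v)]$, the output frequency $N$ together with the negative second factor forces $N_1\ges N\vee N_2$ on the exponential. We argue exactly as in \eqref{L4tx2}. We put $\P_{N_2}(e^{-i\rho F}v)$ in $L^4_{T,x}$, either via \eqref{eFg1}, giving $\V\|J^s v\|_{\wt{L^4}}$, or crudely via $\V^2\|v\|_{L^4}$. We put $\P_{N_1}e^{i\rho F}$ in $L^\infty_{T,x}$ via \eqref{PNe3}, which gains $N_1^{-\frac12}\|\P_{\ges N_1}v\|_{L^2}$ or $N_1^{-\frac32}\V^3$. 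This extra $N_1^{-\frac12}$ relative to the $L^2$ gain is exactly the half derivative needed.

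The last term $\PbHIp[e^{i\rho F}\Pblo(e^{-i\rho F}v)]$ forces the exponential to sit at frequency $\sim N$. We use \eqref{PNe1} in $L^4_x$ and $\|\Pblo(e^{-i\rho F}v)\|_{L^\infty_x}\les\V$ via Bernstein. The $L^\infty_T$ factors cost nothing, while an $L^4_T$ integration gives the $T^{\frac12}$, or $T^{\frac14}$, factors.

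\emph{Main obstacle and the difference estimate.}

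The delicate point is getting \emph{exactly} $\frac12$ derivative in the $\P_{-,\textup{HI}}[e^{i\rho F}w]$ and $\P_{-,\textup{hi}}$ terms uniformly up to $s<\frac32$. Here the term $N^{-1}\|\P_Nv\|_{L^q}$ of \eqref{PNe1} must be measured in the dyadic $\wt{L^4}$ norm of $J^sv$ rather than in $L^\infty_TH^s$, to avoid a Bernstein loss of $N^{\frac14}$. We also need to sum over $N_1\ll N$ without logarithmic losses, which will use the $\ell^2_N$ structure together with the strict decay $N^{-\frac12}$.

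The difference bound \eqref{Rbddiff} follows the same decomposition. Each difference is telescoped into terms with one factor of $W$, $V$, or $G$. \eqref{PNe2} replaces \eqref{PNe1}, \eqref{eFg2} replaces \eqref{eFg1}, and \eqref{Ginfty} is used whenever smoothing from $G$ is needed; otherwise $G$ is estimated by $\|G\|_{L^\infty_{T,x}}$.
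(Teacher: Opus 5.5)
There is a genuine gap in your treatment of the fourth term $\PbHIp[\Pbhip e^{i\rho F}\cdot \P_{-,\textup{hi}}(e^{-i\rho F}v)]$, which is the delicate one.

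In its dominant interaction the exponential sits at the output frequency $N_1\sim N$, while $e^{-i\rho F}v$ is at frequency $\ll N$. So no derivative can be moved onto the second factor, and your option of applying $J^s$ to it via \eqref{eFg1} does not help here. Your allocation puts the exponential in $L^\infty_{T,x}$ via \eqref{PNe3} and the other factor in $L^4_{T,x}$. This gives
\[
N^{s+\frac12}\|\P_N e^{i\rho F}\|_{L^\infty_{T,x}}\|\P_{\ll N}(e^{-i\rho F}v)\|_{L^4_{T,x}}\les (1+\V)\,N^{s}\|\P_{\ges N}v\|_{L^\infty_TL^2_x}\|v\|_{L^4_{T,x}}+\dots,
\]
that is, exactly half a derivative with nothing to spare. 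The resulting $\ell^2_N$-sum $\sum_N N^{2s}\|\P_{\ges N}v\|_{L^\infty_TL^2_x}^2$ is not controlled by $\|v\|_{L^\infty_TH^s_x}^2$:
\begin{itemize}
\item at $s=0$, the frequency-$M$ piece of $v$ is counted about $\log M$ times, even at a fixed time;
\item for $s>0$, the supremum in time sits inside the dyadic sum.
\end{itemize}
The resulting product $\|v\|_{L^\infty_TH^s_x}\|v\|_{L^4_{T,x}}$ is also not one of the terms in \eqref{Rbd}. The underlying reason is that \eqref{PNe3} absorbs the diagonal piece $N^{-1}\wt\P_N v$ into $N^{-\frac12}\|\P_{\ges N}v\|_{L^2}$. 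Once the exponential is in $L^\infty_x$, this piece can no longer be measured in $L^4_{T,x}$.

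The paper uses the opposite allocation, which is also the one in \eqref{L4tx2} that you cite. It puts $\wt\P_N e^{i\rho F}$ in $L^\infty_TL^4_x$ via \eqref{PNe1} with $q=4$, and $\P_{\les N}(e^{-i\rho F}v)$ in $L^4_TL^\infty_x$ by Bernstein, at cost $N^{\frac12}\V$. The three pieces of \eqref{PNe1} then behave as follows:
\begin{itemize}
\item the diagonal piece gives $N^{s+\frac12}\cdot N^{-1}\|\wt\P_Nv\|_{L^4_{T,x}}\cdot N^{\frac12}\V=\V N^{s}\|\wt\P_Nv\|_{L^4_{T,x}}$, which sums in $\ell^2_N$ to $\V\|J^s_xv\|_{\wt{L^4_{T,x}}}$;
\item the piece $N^{-\frac54}\|\P_{\ges N}v\|_{L^2}$ leaves a spare $N^{-\frac14}$;
\item the $N^{-\frac94}\V^4$ piece is summable precisely for $s<\frac32$.
\end{itemize}
This is the mechanism you describe in your `main obstacle' paragraph, but your concrete argument for this term does not implement it.

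A secondary point concerns the term $e^{i\rho F}\PbLO w$. Estimate \eqref{eFg1} is only available up to regularity $\frac32$, so $s\le 1$ once you apply it with $J^{s+\frac12}$. It also gives $L^4_x$ rather than $\wt{L^4_{T,x}}$ bounds, and \eqref{Lpwt} goes only one way. For this term you should instead use $\P_N[\P_{>2^6}e^{i\rho F}\cdot\PbLO w]=\P_N[\wt\P_N e^{i\rho F}\cdot\PbLO w]$ together with \eqref{PNe1}, as the paper does.

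Your treatment of $\P_{-,\textup{HI}}[e^{i\rho F}w]$ is essentially the paper's: $L^{12}_x$ for the exponential, $L^6_{T,x}$ for $w$, and H\"older in time. However, term (c) there leaves only $N^{-\frac1{12}}$, not $N^{-\frac12}$; this still suffices.
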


\begin{proof}
From \eqref{R}, we have
\begin{align*}
\|J^{s+\frac 12}_x & R(v,w) \|_{\wt{L^4_{T,x}}} \les    \|J^{s+\frac 12}_x \PbLO[ e^{i\rho F} w]  \|_{\wt{L^4_{T,x}}}+\|J^{s+\frac 12}_x \P_{-,\text{HI}}[ e^{i\rho F} w] \|_{\wt{L^4_{T,x}}} + \|J^{s+\frac 12}_x [e^{i\rho F} \PbLO w] \|_{\wt{L^4_{T,x}}} \\
& \hphantom{X} + \|J^{s+\frac 12}_x\PbHIp[ \Pbhip e^{i\rho F} \cdot  \P_{-,\text{hi}}(e^{-i\rho F}v)]  \|_{\wt{L^4_{T,x}}} +\|J^{s+\frac 12}_x\PbHIp [ e^{i\rho F} \Pblo( e^{-i\rho F} v)] \|_{\wt{L^4_{T,x}}} \\
&=: A_1 +A_2+A_3+A_4+A_5.
\end{align*}
First, we trivially have
\begin{align}
A_1\les \|w\|_{L^{4}_{T,x}} \les \|w\|_{Y^{0}_{T}}. \label{RA1}
\end{align}
For $A_{2}$, as $w$ is supported on non-negative frequencies we see that the exponential factor in $\P_{-,\text{HI}}[ e^{i\rho F} w]$ carries the largest frequency and so for fixed $t\in [0,T]$, by \eqref{PNe1} and \eqref{L4}, we obtain
\begin{align*}
\| \P_{N} [ \P_{-,\text{HI}}[ e^{i\rho F} w]\|_{L^{4}_x} & \les \sum_{N_1 \ges N\vee N_2} \| \P_{N_1}e^{i\rho F} \cdot \P_{N_2}w\|_{L^{4}_x}  \\
&\les  \sum_{N_1 \ges N\vee N_2} \| \P_{N_1}e^{i\rho F}\|_{L^{12}_x} \|\P_{N_2}w\|_{L^{6}_x} \\
& \les  \sum_{N_1 \ges N\vee N_2} \big(N_1^{-2-\frac{1}{12}} \|v\|_{L^2}^{4} +N_{1}^{-\frac 12-\frac{1}{12}} (1+\|v\|_{L^2})^{4}\|\P_{\ges N_1}v\|_{L^2} \big) \|\P_{N_2}w\|_{L^{6}_x}. 
\end{align*}
Therefore, 
\begin{align}
A_2^{2} &\les  T^{\frac 16} \|w\|_{L^{6}_{T,x}}^2   \sum_{N} N^{2s+1} \big(  N^{-4-\frac 16+} \|v\|_{L^{\infty}_{T}L^2}^{8}+N^{-1-\frac 16 -2s+} (1+\|v\|_{L^{\infty}_{T}L^2})^8 \|v\|_{L^{\infty}_{T}H^s}^2  \big) \notag \\
& \les T^{\frac 16} \|w\|^2_{Z^{0}_{T}} ( \|v\|_{L^{\infty}_{T}L^2}^8 + (1+\|v\|_{L^{\infty}_{T} L^2})^8 \|v\|_{L^{\infty}_{T} H^s}^2), \label{RA2}
\end{align}
provided that $0\leq s<\frac 32+\frac{1}{12}=\frac{19}{12}$.
For $A_3$, we decompose $e^{i\rho F} = \P_{\leq 2^{6}}  e^{i\rho F}  +\P_{> 2^{6}} e^{i\rho F} $, where $\P_{\leq 2^{6}}  e^{i\rho F} : = e^{i\rho F}-\P_{>2^{6}}e^{i\rho F}$. It follows that 
$\P_{\leq 2^{6}}  e^{i\rho F} \cdot \PbLO w = \P_{\leq 2^{10}}[ \P_{\leq 2^{6}}  e^{i\rho F} \cdot \PbLO w],$
so that this contribution to $A_3$ is simply bounded by $\|w\|_{L^{4}_{T,x}}$. For the other contribution, we notice that 
$
\P_{N}[ \P_{> 2^{6}} e^{i\rho F}  \cdot \PbLO w] = \P_{N}[ \wt{\P}_{N}\P_{> 2^{6}} e^{i\rho F}  \cdot \PbLO w],
$
so then by Bernstein's inequality and \eqref{PNe1}, this contribution to $A_3$ is bounded by
\begin{align*}
\bigg(& \sum_{N\gg 1} N^{2s+1} \| \wt{\P}_{N} e^{i\rho F}\|_{L^{\infty}_{T}L^4_x}^2 \bigg)^{\frac 12} \|\PbLO w\|_{L^{4}_{T}L^{\infty}_{x}} \\
& \les  \|w\|_{L^{4}_{T,x}}  \bigg( \sum_{N\gg 1} N^{2s+1}N^{-4-\frac 12} \|v\|_{L^{\infty}_{T}L^2_x}^{8} + N^{2s+1} N^{-\frac 32} (1+\|v\|_{L^{\infty}_{T}L^2_x})^2 \|\P_{\ges N}v\|_{L^{\infty}_{T}L^2_x}^{2} \bigg)^{\frac 12} \\
& \les  \|w\|_{Y^{0}_{T}} ( \|v\|_{L^{\infty}_{T}L^2}^4 + (1+\|v\|_{L^{\infty}_{T} L_x^2}) \|v\|_{L^{\infty}_{T} H^s})
\end{align*}
Therefore, we have shown that 
\begin{align}
A_3 \les \|w\|_{Y^{0}_{T}} ( 1+\|v\|_{L^{\infty}_{T}L^2}^4 + (1+\|v\|_{L^{\infty}_{T} L_x^2}) \|v\|_{L^{\infty}_{T} H^s}). \label{RA3}
\end{align}
We move onto $A_4$, which is more delicate. In view of the signs of the frequencies, Bernstein's inequality, and \eqref{PNe1}, we have 
\begin{align}
A_4^{2} & \les \sum_{N\geq 1} N^{2s+1} \|\Pbhip \wt{\P}_{N}(e^{i\rho F}) \cdot \P_{\les N}\P_{-,\text{hi}}[ e^{-i\rho F}v]\|_{L^{4}_{T,x}}^{2} \notag \\
& \les \sum_{N\geq 1} N^{2s+1} \Big\|   \| \wt{\P}_{N}(e^{i\rho F})\|_{L^{4}_{x}} \|\P_{\les N}[ e^{-i\rho F}v]\|_{L^{\infty}_{x}} \Big\|_{L^{4}_{T}}^2\notag  \\
&\les \sum_{N\geq 1} N^{2s+1} N^{-\frac{9}{2}} \|v\|_{L^{\infty}_{T}L^2_x}^{8} \|\P_{\les N}[ e^{-i\rho F}v]\|_{L^{4}_{T}L^{\infty}_{x}}^2  \notag   \\
&\qquad+ \|v\|^2_{L^{\infty}_{T}L^2_x} \sum_{N\geq 1} N^{2s+2}\Big\{  N^{-\frac{5}{2}}(1+\|v\|_{L^{\infty}_{T}L^2_x})^2 \|v\|_{L^{\infty}_{T}L^2_x}^2 \|\P_{\ges N}v\|_{L^{4}_{T}L^2_x}^2 + N^{-2} \|\wt{\P}_{N}v\|_{L^{4}_{T,x}}^{2} \Big\}\notag  \\
&\les \|v\|_{L^{\infty}_{T}L^2_x}^{8} \sum_{N\geq 1} N^{2s-\frac{7}{2}} N^{\frac 12}  \|\P_{\les N}[ e^{-i\rho F}v]\|_{L^{4}_{T,x}}^2 \notag   \\
&\qquad +  (1+\|v\|_{L^{\infty}_{T}L^2_x})^2 \|v\|^4_{L^{\infty}_{T}L^2_x} \sum_{N\geq 1} N^{2s-\frac 12}\|\P_{\ges N}v\|_{L^{4}_{T}L^2_x}^2  + \|v\|^2_{L^{\infty}_{T}L^2_x}  \|J^{s}_x v\|_{\wt{L^4_{T,x}}}^{2}
\notag \\
&\les \|v\|_{L^{\infty}_{T}L^2_x}^8 \|v\|_{L^{4}_{T,x}}^{2}+T^{\frac 12}(1+\|v\|_{L^{\infty}_{T}L^2_x})^2 \|v\|^4_{L^{\infty}_{T}L^2_x} \|v\|_{L^{\infty}_{T}H^s_x}^{2} + \|v\|^2_{L^{\infty}_{T}L^2_x}  \|J^{s}_x v\|_{\wt{L^4_{T,x}}}^{2}. \label{RA4}
\end{align}
In a similar way, we have
\begin{align}
&A_5^2  \les \sum_{N\geq 1} N^{2s+1} \|\wt{\P}_{N}e^{i\rho F}\|_{L^{4}_{T,x}}^{2} \|\Pblo (e^{-i\rho F}v)\|_{L^{\infty}_{T,x}}^{2} \notag \\
& \les   \|v\|_{L^{\infty}_{T}L^{2}_x}^2 \Big( T^{\frac 12} \|v\|_{L^{\infty}_{T}L^{2}_x}^{8} + T^{\frac 12} (1+\|v\|_{L^{\infty}_{T}L^2_x})^{2}  \|v\|_{L^{\infty}_{T}H^{\max(0,s-\frac 34+)}_x}^2 + \|J^{s}_x v\|_{\wt{L^4_{T,x}}}^{2}\Big). 
\label{RA5}
\end{align}
Combining \eqref{RA1}, \eqref{RA2}, \eqref{RA3}, \eqref{RA4}, and \eqref{RA5} then establishes \eqref{Rbd}.

For \eqref{Rbddiff}, we take the difference of $R(v_1,w_1)$ and $R(v_2,w_2)$ using \eqref{R}. Relative to the estimate for \eqref{Rbd}, it remains only to consider the contributions for terms with a factor of $G$ appearing. By the same argument as for \eqref{RA1}, we see that 
\begin{align*}
\|J^{s+\frac 12}_x \PbLO[Gw_1] \|_{\wt{L^{4}_{T,x}}} 
& \les \|G\|_{L^{\infty}_{T,x}} \|w_1\|_{Y^{0}_{T}}.
\end{align*}
Next, similar to \eqref{RA2} and \eqref{RA3} but using \eqref{PNe2}, we have
\begin{align*}
\|J^{s+\frac 12}_x & \P_{-,\text{HI}}[G w_1] \|_{\wt{L^{4}_{T,x}}}  +\|J^{s+\frac 12}_x [ G \PbLO w_1]\|_{\wt{L^{4}_{T,x}}}      \\
\les& \|w_1\|_{Y^{0}_{T}}\Big(  \|V\|_{L^{\infty}_{T}H^{s}_x} + \V_{12}\|v_2\|_{L^{\infty}_{T}H^{s}_x}\|G\|_{L^{\infty}_{T,x}} \\
& + (1+\V_{12})^{6} \{ \|V\|_{L^{\infty}_{T}H^{s}_x}+ (1+\|v_1\|_{L^{\infty}_{T}H^{s}_x}+\|v_2\|_{L^{\infty}_{T}H^{s}_x}) ( \|V\|_{L^{\infty}_{T}L^2_x} + \V_{12}\|G\|_{L^{\infty}_{T,x}})\Big) \\
& + \|w_1\|_{Y^{0}_{T}} \|G\|_{L^{\infty}_{T,x}}.
\end{align*}
Similarly to \eqref{RA4}, using \eqref{PNe2} instead of \eqref{PNe1}, we also have
\begin{align*}
&\|J^{s+\frac 12}_x\PbHIp[ \Pbhip G \cdot  \P_{-,\text{hi}}(e^{-i\rho F_2}v_2)]  \|_{\wt{L^4_{T,x}}} \\
& \les \|v_2\|_{L^{\infty}_{T}L^{2}_x}\Big\{ \|J^{s}_x V\|_{\wt{L^{4}_{T,x}}} + \V_{12}\|J^{s}_x v_2\|_{\wt{L^{4}_{T,x}}} \|G\|_{L^{\infty}_{T,x}}  \\
&  \quad + \V_{12}^{3}(1+\V_{12})^3 \|v_2\|_{L^{4}_{T,x}} \big(  \|V\|_{L^{\infty}_{T}L^2_x} + \V_{12}\|G\|_{L^{\infty}_{T,x}} )    \big) \\
& \quad +(1+\V_{12})^{3} \big(  \|V\|_{L^{\infty}_{T}H^{s}_x} + \V_{12}\|v_2\|_{L^{\infty}_{T}H^{s}_x} \|G\|_{L^{\infty}_{T,x}} + ( \|v_1\|_{L^{\infty}_{T}H^{s}_x} +\|v_2\|_{L^{\infty}_{T}H^{s}_x}     ) \|G\|_{L^{\infty}_{T,x}} \big)  \Big\}.
\end{align*}
A similar bound holds for the term $\PbHIp [ G \Pblo( e^{-i\rho F_2} v_2)]$. This proves \eqref{Rbddiff}.
\end{proof}

The estimates in Lemma~\ref{LEM:s12R} simplify when there are no derivatives acting on $R(v,w)$. 

\begin{lemma} \label{LEM:RLq}
 Let $w$ be such that $\P_{+}w=w$ and $F$ be a real-valued function such that $\dx F= v$. Then,
\begin{align}
\| R(v,w) \|_{L^{4}_{x}} & \les (1+\|v\|_{L^2})^{4}( \|w\|_{L^2}+\|v\|_{L^2}).
\label{RLq}
\end{align}
Moreover, for $j=1,2,$ let  $ w_j\in L^2_x$ such that $\P_{+}w_j=w_j$, $F_j$ be two real-valued functions such that $\dx F_j= v_j$. Let $G:=e^{i\rho F_1}-e^{i\rho F_2}$, $V:=v_1-v_2$, and $W:=w_1-w_2$. Then,
\begin{align}
\begin{split}
\| R(v_1,w_1) - R(v_2,w_2)\|_{L^4}  \les &  (1+\V_{12} + \|w_1\|_{L^2}+\|w_2\|_{L^2})^{8} (\|W\|_{L^2}+\|V\|_{L^2}) \\
& +\|G\|_{L^{\infty}} (1+\V_{12})^{8} (\|w_1\|_{L^2}+\|w_2\|_{L^2}+\V_{12}).
\end{split} \label{RLqdiff}
\end{align}
\end{lemma}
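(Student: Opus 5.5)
We bound the five terms of $R(v,w)$ in \eqref{R} separately in $L^4_x$ at fixed time. No derivatives fall on $R$, so the only tools needed are Bernstein's inequality, $|e^{\pm i\rho F}|=1$, and the fact that the $L^2$-sized pieces of $e^{i\rho F}$ carry gradients bounded by $\|v\|_{L^2}$.

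\emph{Terms containing $w$.} For $\PbLO[e^{i\rho F}w]$, write $\PbLO$ as convolution with a fixed Schwartz kernel; since it localises to frequencies $\les 1$, Bernstein gives $\|\PbLO[e^{i\rho F}w]\|_{L^4}\les \|e^{i\rho F}w\|_{L^2}=\|w\|_{L^2}$. Similarly, $\|e^{i\rho F}\PbLO w\|_{L^4}\les \|\PbLO w\|_{L^4}\les\|w\|_{L^2}$.

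For $\P_{-,\textup{HI}}[e^{i\rho F}w]$, recall that $\P_+w=w$. Only the high-frequency part $\P_{\ges N}e^{i\rho F}$ can contribute to output frequency $-N$ with $N\ge 8$. We therefore write
\[
\P_{-,\textup{HI}}[e^{i\rho F}w]=\P_{-,\textup{HI}}[(\PbHI e^{i\rho F})\,w]+\P_{-,\textup{HI}}[(\PbLO e^{i\rho F})\,\PbLO w],
\]
where the second piece is nonzero only when both factors have frequency $O(1)$. This uses the distributional identity \eqref{PloeiF} and the duality argument of \eqref{argument}.
\begin{itemize}
\item The second piece is bounded by $\|\PbLO w\|_{L^4}\les\|w\|_{L^2}$.
\item For the first piece, use H\"older with $\|\PbHI e^{i\rho F}\|_{L^\infty}$ and $\|w\|_{L^4}$. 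This fails, since $w$ is only controlled in $L^2$. Instead, dyadically decompose and use \eqref{PNe3}:
\[
\|\P_{N_1}e^{i\rho F}\|_{L^\infty}\les N_1^{-3/2}\|v\|_{L^2}^3+N_1^{-1/2}(1+\|v\|_{L^2})\|v\|_{L^2}.
\]
Pair this with Bernstein, $\|\P_{N_2}w\|_{L^4}\les N_2^{1/4}\|w\|_{L^2}$, which requires $N_2\les N_1$ by the frequency support.
\item The sum over $N_2\les N_1$ gives $N_1^{1/4}\|w\|_{L^2}$. This is summable against $N_1^{-1/2}$, giving $(1+\|v\|_{L^2})^4\|w\|_{L^2}$.
\end{itemize}

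\emph{Terms containing only $v$.} For the fourth term, bound $\|\Pbhip e^{i\rho F}\|_{L^\infty}$ dyadically via \eqref{PNe3}; this is $\les(1+\|v\|_{L^2})^3\|v\|_{L^2}$ after summing $N^{-1/2}$. Then bound $\|\P_{-,\textup{hi}}(e^{-i\rho F}v)\|_{L^4}$, which is worse. Again exploit frequencies: the output is positive and at least $8$, while the second factor is negative, so the first factor has frequency $N_1\ges N_2$. Estimate
\[
\|\P_{N_1}e^{i\rho F}\|_{L^\infty}\,\|\P_{N_2}(e^{-i\rho F}v)\|_{L^4}\les \|\P_{N_1}e^{i\rho F}\|_{L^\infty}\,N_2^{1/4}\|v\|_{L^2},
\]
and sum exactly as above. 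For the fifth term, bound $\|\Pblo(e^{-i\rho F}v)\|_{L^\infty}\les\|v\|_{L^2}$ by Bernstein. It then remains to control $\|\PbHIp[e^{i\rho F}g]\|_{L^4}$ with $g$ bounded and frequency-localised near $0$. By the same argument as \eqref{MPeFg}, this equals $\PbHIp[(\P_{\ges 1}e^{i\rho F})g]$, which we bound via $\|\P_N e^{i\rho F}\|_{L^4}\les N^{-1}\cdot N^{1/4}\|v\|_{L^2}$-type estimates from \eqref{PNe1}, summing in $N$. Collecting all powers gives \eqref{RLq}.

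\emph{Difference estimate.} For \eqref{RLqdiff}, write each difference telescopically. Terms with $W$ or $V$ in one slot are handled as above. Terms with $G=e^{i\rho F_1}-e^{i\rho F_2}$ are handled in two ways: where the exponential was only used in $L^\infty$, take out $\|G\|_{L^\infty}$; where decay of $\P_N e^{i\rho F}$ was needed, use the $L^\infty$ bound on $\P_N G$ from the proof of Lemma~\ref{LEM:PNe} (or \eqref{Ginfty}).

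\emph{Main obstacle.} The main difficulty is making the frequency-support reductions rigorous for $\Pblo e^{i\rho F}$, since it is defined only via \eqref{PbloE}. This will be handled by the duality argument \eqref{argument}. The summability $N_1^{-1/2}N_1^{1/4}$ leaves a margin, so the dyadic sums are not delicate.
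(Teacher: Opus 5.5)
Your proposal is correct and follows essentially the paper's argument. The first, third and fifth terms of \eqref{R} are handled by Bernstein, and the second and fourth terms by a dyadic decomposition in which the exponential factor carries the largest frequency, combined with Bernstein and the decay bounds of Lemma~\ref{LEM:PNe}. The differences are cosmetic:
\begin{itemize}
\item You pair $\|\P_{N_1}e^{i\rho F}\|_{L^\infty}$ (via \eqref{PNe3}) with $\|\P_{N_2}w\|_{L^4}$, where the paper uses an $L^4\times L^\infty$ split.
\item You treat the $\PbLO e^{i\rho F}$ piece of the second term more explicitly than the paper does.
\item Your fifth-term argument is more roundabout than necessary, since directly $\|\PbHIp[e^{i\rho F}\Pblo(e^{-i\rho F}v)]\|_{L^4}\les\|\Pblo(e^{-i\rho F}v)\|_{L^4}\les\|v\|_{L^2}$.
\end{itemize}
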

\begin{proof}
We begin with \eqref{RLq}.
From Bernstein's inequality, we have 
\begin{align*}
\| \PbLO[ e^{i\rho F}w]\|_{L^{4}}+\|e^{i\rho F}\PbLO w\|_{L^{4}} + \| \PbHIp[ e^{i\rho F}\Pblo (e^{-i\rho F}v)]\|_{L^{4}} \les \|w\|_{L^2}+\|v\|_{L^2}.
\end{align*}
Thus, from \eqref{R}, it remains to estimate the contributions from the second and fourth terms on the right-hand side of \eqref{R}.
We only give details for the second term as the same arguments work to estimate the fourth. 
By the signs of the frequencies, $e^{i\rho F}$ carries the largest frequency. Thus, by Bernstein's inequality and \eqref{PNe1}, we have
\begin{align*}
\| \P_{-,\text{HI}}[e^{i\rho F}w]\|_{L^{4}} & \les \sum_{N_1 \ges N_2} \|\P_{N_1}e^{i\rho F}\|_{L^{4}} \|\P_{N_2}w\|_{L^{\infty}} \\
& \les \|w\|_{L^2} \sum_{N_1 \ges N_2} N_{2}^{\frac 12} \big( N_1^{-\frac{9}{4}} \V^{4}+N_1^{-\frac{5}{4}}(1+\|v\|_{L^2_x})\|v\|_{L^2_x}^2 + N_{1}^{-1}\|\P_{N_1}v\|_{L^4}\big) \\
& \les  \|w\|_{L^2}  \bigg( \|v\|_{L^2_x}^4 + (1+\|v\|_{L^2_x})\|v\|_{L^2_x}^2 +\sum_{N_1\ges N_2} N_{1}^{-\frac 14} \|v\|_{L^2_x}\bigg) \\
& \les \|w\|_{L^{2}}\|v\|_{L^2_x} (1+\|v\|_{L^2_x})^{3}.
\end{align*}
This proves \eqref{RLq}.
The estimate \eqref{RLqdiff} follows in the same way but using \eqref{PNe2}.
\end{proof}

\section{Multilinear estimates}\label{SEC:tri}

In this section, we detail the key multilinear estimates needed to analyse the left-hand sides of \eqref{weqn} (the ``BO-part") and \eqref{ueq} (the ``Schr\"{o}dinger-part").

\subsection{The BO-part}
The first of these concerns one of the main nonlinear contributions in \eqref{weqn}. The estimate is an adaptation of the key multilinear estimate in \cite[Proposition 3.5]{MP} to the framework of the atomic functions spaces. As such, the estimate in \cite[Proposition 3.5]{MP} does not imply the following.

\begin{lemma}\label{LEM:BO1}
Let $s\geq 0$ and $0<T\leq 1$. Then, 
\begin{align}
\big\| \dx \Pbhip[ (\dx^{-1}w) \P_{-}\dx v] \big\|_{N^{s}_{T}} & \les  \|w\|_{Z^{s}_{T}} \big( \|v\|_{L^2_{T,x}} + \|v\|_{\wt{L^{4}_{T,x}}}+\|v\|_{X_{BO;T}^{-1,1}}\big).  \label{BO1} 
\end{align}
\end{lemma}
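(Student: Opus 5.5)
By Lemma~\ref{LEM:linZsb}, it suffices to bound the duality pairing
\begin{align*}
I:=\bigg|\int_0^T\!\!\int_{\R} \dx \Pbhip[(\dx^{-1}w)\,\P_{-}\dx v]\,\cj{h}\,dx\,dt\bigg|
\end{align*}
uniformly over $h\in Y^{-s}_T$ with $\|h\|_{Y^{-s}_T}=1$. We may take $\P_+ h=h$ and localise $h$ to frequencies $|\xi|\gtrsim 1$. We decompose dyadically: $w=\sum_{N_1}\P_{N_1}w$, $v=\sum_{N_2}\P_{N_2}v$, $h=\sum_{N}\P_N h$. The frequency signs enforce a constraint. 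Since $\dx^{-1}w$ has positive frequency $\xi_1\sim N_1$, $\P_-\dx v$ has negative frequency $\xi_2\sim N_2$, and the output frequency $\xi=\xi_1+\xi_2>0$ satisfies $\xi\sim N$, we must have $N_1\gtrsim N\vee N_2$. After moving the derivatives, the multiplier is of size $N\,N_2/N_1$. This splits the analysis into three regimes: (a) $N_2\ll N_1$, so $N\sim N_1$ (high$\times$low$\to$high); (b) $N_2\sim N_1\gg N$ (high$\times$high$\to$low); and (c) $N_2\sim N_1\sim N$.

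\emph{Regime (a).} The multiplier is $\sim N_2$, so one derivative falls on the low-frequency factor $v$. This is the dangerous term that the gauge was built to weaken, and only the negative frequencies of $v$ remain. The plan is to exploit the resonance function
\begin{align*}
\Omega=\xi_1|\xi_1|+\xi_2|\xi_2|-\xi|\xi| \qquad\text{(after conjugation of }h\text{)}.
\end{align*}
Here $w$ and $h$ use the Schr\"odinger phase $-\xi^2$, which coincides with $-\xi|\xi|$ on positive frequencies. One computes $|\Omega|\sim N_1N_2$. We then decompose each of the three functions into modulations below and above $M\sim N_1N_2$ using $\Q_{\le M}$ and $\Q_{>M}$, which is legitimate by \eqref{Qbddness}, and treat three subcases.
\begin{itemize}
\item[(i)] If $v$ carries the high modulation, we use $\|\P_{N_2}v\|_{L^2_{T,x}}\lesssim (N_1N_2)^{-1}N_2\|v\|_{X^{-1,1}_{BO;T}}$ together with the bilinear Strichartz estimate \eqref{bilin1} on $\P_{N_1}w\cdot\cj{\P_N h}$. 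Both lie in $U^2/V^2$, and since $N_1\sim N\gg N_2$ one must project onto $N_2$-intervals to apply it. This gain of $N_1^{-1/2}$ more than pays for $N_2$.
\item[(ii)] If $w$ or $h$ carries the high modulation, we use \eqref{Qgain} to gain $(N_1N_2)^{-1/2}$, and put the remaining two factors in $L^4_{T,x}$ via \eqref{L4}, with $v$ in $\wt{L^4_{T,x}}$. This costs $N_2\cdot(N_1N_2)^{-1/2}=(N_2/N_1)^{1/2}$, which is summable in $N_2\ll N_1$.
\item[(iii)] If all modulations are low, the integral vanishes, since the time-frequency supports are incompatible.
\end{itemize}
Summing over $N_2$ and then over $N\sim N_1$ uses the $\ell^2$ structure of $Z^s$ and $Y^{-s}$, together with the fact that $N^s N_1^{s}$ weights cancel when $N\sim N_1$.

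\emph{Regimes (b) and (c).} In regime (b) the multiplier is $N\cdot N_2/N_1\sim N$. We use H\"older with $w$, $v$ in $L^4$ and $h$ in $L^2_{T,x}$ (from \eqref{LinftyU2}, with $T\le1$), or alternatively the bilinear estimate on $\P_{N_1}w\,\P_N h$ combined with $v\in L^2_{T,x}$. This gives a bound of the form $N^{1}N_1^{-1/2}(N/N_1)^{s}\cdots$, and the remaining power of $N$ must again be absorbed by modulation. Here $|\Omega|\sim N_1 N$ (one can check that $\Omega=2\xi_1\xi$-type), so the same three-way modulation splitting applies. It yields a factor $(N/N_1)^{1/2}$, which is summable over $N\ll N_1$ for $s\ge0$; this is the place where $s\ge0$ is exactly needed. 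Regime (c) is handled by the crude bound: the multiplier is $\sim N$, and the modulation argument gives $N\cdot N^{-1}$.

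The main obstacle I expect is subcase (i) of regime (a) and its analogue in regime (b), where $v$ carries high modulation. There $v$ lies only in $X^{-1,1}_{BO}$ rather than in an adapted space, so we must use the mixed bilinear estimate with $w\in U^2$ and $h\in V^2$, and the $V^2$-version of \eqref{bilin1} incurs a logarithmic loss. To avoid it I would interpolate: I would use $h\in V^2\subset U^p$, apply the bilinear estimate only to the pair $w,v$ with $v\in X^{0,\frac12+}$ (obtained from $X^{-1,1}$ and $L^2$ by interpolation at this frequency scale), and put $h$ in $L^\infty_TL^2_x$. The surplus negative powers $(N_2/N_1)^{\epsilon}$ should pay for any logarithm.
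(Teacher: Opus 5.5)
\textbf{Gap: regime (b) with $v$ carrying the dominant modulation.}

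Much of your framework coincides with the paper's cases $\ell=1,2$: duality against $Y^{-s}_T$, the constraint $N_1\ges N\vee N_2$, the resonance $|\Omega|=2|\xi||\xi_2|$, and the gains $(N_2/N_1)^{1/2}$, resp.\ $(N/N_1)^{1/2}$, when $w$ or $h$ carries modulation $\ges|\Omega|$. The gap is the case where $v$ carries the dominant modulation in your regime (b), $N_2\sim N_1\gg N$. This is exactly the case the paper singles out (its $\ell=3$, ``to avoid logarithmic losses'').

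There $|\Omega|\sim NN_2$. So the part of $\P_{N_2}v$ with BO-modulation $\ges NN_2$ has $L^2_{t,x}$-norm $\les (NN_2)^{-1}N_2\|v\|_{X^{-1,1}_{BO}}=N^{-1}\|v\|_{X^{-1,1}_{BO}}$, which exactly cancels the multiplier $N$. The modulation splitting therefore gives no decay in this subcase, not the $(N/N_1)^{1/2}$ stated in your regime (b) paragraph. With $w,h\in L^4_{t,x}$ you are left with the $\ell^1$-sum $\sum_{N\ll N_1}\|\P_N h\|_{L^4_{t,x}}$, i.e.\ a $\log N_1$ loss.

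In regime (a) the analogous subcase is not an obstacle: the same count gives $N_2/N_1$ with H\"older alone. The bilinear gain you quote there is also wrong. For $\P_{N_1}w\cdot\cj{\P_N h}$ with $N_1\sim N$ it is $|\xi_1-\xi|^{-1/2}\sim N_2^{-1/2}$, not $N_1^{-1/2}$.

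\textbf{Your proposed remedy fails.} In regime (b), $w$ and $\P_-v$ sit at comparable frequencies $\sim N_1$ of opposite sign with output $\xi\sim N$, so \eqref{bilin1} does not apply. Since $\P_-v=\cj{\P_+v}$ and $\P_+v$ has the same linear phase as $w$, the product behaves like $u_1\cj{u_2}$. The best bilinear gain is then $|\xi|^{-1/2}\sim N^{-1/2}$, as in \eqref{bilin2}. Upgrading the high-modulation part of $\P_{N_2}v$ from $X^{-1,1}_{BO}$ to $X^{0,\frac12+}$ costs $N_2(NN_2)^{-\frac12+}$. The net factor is therefore roughly $N\cdot N^{-\frac12}\cdot N_2^{\frac12}N^{-\frac12}=N_2^{\frac12}$, which diverges. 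In regime (a) the same recipe gives roughly $N_2^{3/2}/N_1$. There is no surplus $(N_2/N_1)^{\epsilon}$.

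\textbf{What does close the case.} The option you discarded works. In regime (b) the pair $(\P_{N_1}w,\P_N h)$ has separated frequencies, so \eqref{bilin1} gains $N_1^{-1/2}$. The factor $1+\log(N_1/N)$ lost by taking $h\in V^2$ (interpolating against the trivial $U^4$ bound) is harmless, because $\sum_{N_1}N_1^{-1/2}\sum_{N\le N_1}(1+\log(N_1/N))<\infty$.

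The paper instead avoids dyadic summation in this case. On the region where $v$ has the dominant modulation, it bounds the symbol pointwise by $\frac{|\xi_2|}{|\xi_1|}\cdot\frac{\jb{\sigma_2}}{|\xi_2|}\le\frac{\jb{\sigma_2}}{|\xi_2|}$, where $\sigma_2$ is the BO modulation of $v$. It then drops the region restriction by positivity. Finally it applies H\"older with $\F^{-1}_{t,x}|\ft{h_T}|$ and $\F^{-1}_{t,x}|\ft{w_T}|$ in $L^4_{t,x}$ via \eqref{L4V2}. That bound rests on the $X^{0,3/8}$-norm being invariant under $\ft f\mapsto|\ft f|$.
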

\begin{proof}
In order to ease the notation, we define the bilinear form:
\begin{align*}
B(w,v) : =  \dx \Pbhip[ (\dx^{-1}w) \P_{-}\dx v].
\end{align*}
Note that by the signs of the frequencies, we have $|\xi_1|= |\xi|+|\xi_2|$. Thus, it suffices to prove \eqref{BO1} in the case $s=0$.
By Lemma~\ref{LEM:linXsb} and \eqref{Indbdd}, we reduce to showing that
\begin{align}
\bigg| \int_{\R} \int_{\R} B( \ind_{[0,T)}w,v) \cj{ \ind_{[0,T)} h} dx dt \bigg| \les \|h\|_{Y^{0}_{T}} \|w\|_{Z^{0}_{T}} \big( \|v\|_{L^{2}_{T,x}} + \|v\|_{\wt{L^4_{T,x}}} + \|v\|_{X^{-1,1}_{BO;T}}\big), \label{BO10}
\end{align}
for any $h\in Y^{0}_{T}$. 
We then take extensions $\wt{h}, \wt{w}, \wt{v}$ of $h$, $w$, and $v$, respectively, and define $\wt{h}_{T}=\ind_{[0,T)}\wt{h}$ and $\wt{w}_{T}=\ind_{[0,T)}w$, and further reduce, using \eqref{Indbdd}, to showing
\begin{align}
\bigg| \int_{\R} \int_{\R} B(\wt{w}_{T},\wt{v}) \cj{ \wt{h}_{T}} dx dt \bigg| \les \|\wt{h}_{T}\|_{Y^{0}} \|\wt{w}_{T}\|_{Z^{0}} \big( \|\wt{v}\|_{L^{2}_{t,x}} + \|\wt{v}\|_{\wt{L^4_{t,x}}} + \|\wt{v}\|_{X^{-1,1}_{BO}}\big). \label{BO11}
\end{align}
In the following, we drop the tildes, writing $h_{T},w_{T},v$ in place of $\wt{h}_{T}, \wt{w}_{T}, \wt{v}$, respectively. 

We will use the following bounds:
\begin{align}
\| \ind_{[0,T)}f\|_{L^{4}_{t,x}} + \| \mathcal{F}_{t,x}^{-1}\{  |\mathcal{F}_{t,x}\{ \ind_{[0,T)}f\}|    \}\|_{L^{4}_{t,x}} \les (1+T^{\frac 12})\|f\|_{V^{2}_{S}L^2_x}. \label{L4V2}
\end{align}
We only show the second bound, with the one for $\ind_{[0,T)}f$ following from the same arguments. Using \eqref{L4Strich}, \eqref{U2Xsb}, \eqref{Indbdd}, we have
\begin{align*}
 \| \mathcal{F}_{t,x}^{-1}\{  |\mathcal{F}_{t,x}\{ \ind_{[0,T)}f\}| \}\|_{L^{4}_{t,x}} 
 &\les  \| \mathcal{F}_{t,x}^{-1}\{  |\mathcal{F}_{t,x}\{ \ind_{[0,T)}f\}|    \}\|_{X^{0,\frac 38}} \\
 & \sim \|  \ind_{[0,T)}f  \|_{X^{0,\frac 38}}\\
 & \les \| \ind_{[0,T)}f \|_{L^{2}_{t,x}} + \| \ind_{[0,T)}f\|_{\dot{X}^{0,\frac 12,\infty}} \\
 & \les T^{\frac 12} \|f\|_{L^{\infty}_{t}L^2_x}  +\| \ind_{[0,T)}f\|_{V^{2}_{S}L^2_x} \\
 & \les T^{\frac 12} \|S(-t)f\|_{L^{\infty}_{t}L^2_x}+2\| f\|_{V^{2}_{S}L^2_x}  \\
 &\les (1+T^{\frac 12}) \| f\|_{V^{2}_{S}L^2_x}.
\end{align*}
This establishes \eqref{L4V2}.

By dyadic decomposition, we write 
\begin{align}
\text{LHS} \eqref{BO11} = \bigg| \sum_{N_1,N_2,N_3} \int_{\R} \int_{\R} B(\P_{N_1}w_{T},\P_{N_2}v) \cj{ \P_{N_3}h_{T}} \, dx dt \bigg| = \bigg| \sum_{N_1,N_2,N_3} I_{N_1,N_2,N_3}\bigg|. \label{BO12}
\end{align}
Due to the signs of the frequencies in $B(\P_{N_1}w, \P_{N_2}v)$, we have $N_1 \ges\max(N_2,N_3)$ and from the outer $\Pbhi$ in the definition of $B$, we have $N_3\ges 1$.
We first dispense with the trivial case when $N_2 \les 1$. Here, we must have that $N_3= 2^{j}N_1$ for $|j|\leq 2$. Then, by H\"{o}lder's inequality,
\begin{align*}
|I_{N_1,N_2,N_3}|& \les \|\P_{N_3}h_T\|_{L^{4}_{t,x}} \| B(\P_{N_1}w_T, \P_{N_2}v)\|_{L^{\frac 43}_{t,x}}  \les  \|\P_{N_3}h_T\|_{L^{4}_{t,x}} \|\P_{N_1}w_T\|_{L^{4}_{t,x}} \|\P_{N_2} v\|_{L^{2}_{t,x}},
\end{align*}
so then by Cauchy-Schwarz and \eqref{L4}, we have
\begin{align*}
\sum_{ N_1, N_2,N_3 } \ind_{\{N_{2}\les 1\}} |I_{N_1,N_2,N_3}| & \les \|v\|_{L^{2}_{t,x}} \sum_{|j|\leq 2}\sum_{N_1} \|\P_{2^{j}N_1}h_T\|_{L^{4}_{t,x}} \|\P_{N_1}w_T\|_{L^{4}_{t,x}} \\
&\les \|v\|_{L^{2}_{t,x}} \| h_T\|_{\wt{L^{4}_{t,x}}} \|w_T\|_{\wt{L^{4}_{t,x}}} \\
& \les \|v\|_{L^{2}_{t,x}}  \| h_T\|_{Y^0} \|w_T\|_{Z^{0}}.
\end{align*}

We now assume that $N_2 \gg 1$. We write 
\begin{align}
I_{N_1,N_2,N_3} & = \sum_{\l=1}^{3} I^{(\l)}_{N_1,N_2,N_3}, \label{Ils}\\
 I^{(1)}_{N_1,N_2,N_3} &:  = \int_{\R^2} B(\P_{N_1}w_T , \P_{N_2}v) \cj{ \Q_{\ges N_{2}N_3}\P_{N_3}h_T} \,dxdt  \label{I1},\\
  I^{(2)}_{N_1,N_2,N_3} &:  = \int_{\R^2} B(\Q_{\ges N_{2}N_3}\P_{N_1}w_T , \P_{N_2}v) \cj{ \Q_{\ll N_{2}N_3}\P_{N_3}h_T}\, dxdt  \label{I2},\\
  I^{(3)}_{N_1,N_2,N_3} &:  = \int_{\R^2} B(\Q_{\ll N_{2}N_3}\P_{N_1}w_T , \Q_{\ges N_{2}N_3}\P_{N_2}v) \cj{ \Q_{\ll N_{2}N_3}\P_{N_3}h_T}  \, dxdt  \label{I3}.
\end{align}
The decomposition \eqref{Ils} is exhaustive in view of the resonance relation
\begin{align}
(\tau_1+\xi_1^2)+(\tau_2-\xi_2^2) - (\tau+\xi^2) = \xi_1^2 - \xi_2^2 -\xi^2 = -2\xi \xi_2, \label{resrel}
\end{align}
where the second equality holds when $\xi=\xi_1+\xi_2$.
We consider each of $I^{(\l)}_{N_1,N_2,N_3}$ separately. 

\noi
\underline{\textbf{Case $\l=1$:}}
Consider first the case when $N_1\sim N_3$. Using that $\sum_{N_1\sim N_3} \P_{N_1} = \wt{\P}_{N_3}$
along with H\"{o}lder's inequality, \eqref{Qgain} and \eqref{L4},
\begin{align*}
\bigg|\sum_{ \substack{N_1,N_2,N_3 \\ N_1\sim N_3}} I_{N_1,N_2,N_3}^{(1)}  \bigg|
&=  \bigg|\sum_{ N_2 \les N_3} \int_{\R^2} B(\wt{\P}_{N_3} w_T , \P_{N_2}v) \cj{ \Q_{\ges N_2 N_3}\P_{N_3}h_T} dx dt \bigg| \\
& =  \bigg|\sum_{N_3} \sum_{j=0}^{\log_{2}(N_3)}  \int_{\R^2} B(\wt{\P}_{N_3} w_T , \P_{2^{-j}N_3}v) \cj{ \Q_{\ges 2^{-j} N_3^2}\P_{N_3}h_T} dx dt \bigg| \\
& \les  \sum_{N_3} \sum_{j=0}^{\log_{2}(N_3)}  N_3 \|\dx^{-1}\wt{\P}_{N_3}w_T\|_{L^{4}_{t,x}} \| \dx \P_{-}\P_{2^{-j} N_3}v\|_{L^{4}_{t,x}}\| \Q_{\ges 2^{-j}N_3^2} \P_{N_3}h_T\|_{L^{2}_{t,x}} \\ 
& \les  \sum_{N_3} \sum_{j=0}^{\log_{2}(N_3)}  2^{-\frac{j}{2}}  \|\P_{N_3}h_T\|_{V^{2}_{S}L^2_x} \|\wt{\P}_{N_3}w_T\|_{L^{4}_{t,x}} \| \P_{2^{-j}N_3}v\|_{L^{4}_{t,x}} \\
& \les \|h\|_{Y^0} \|w_T\|_{\wt{L^4_{t,x}}} \|v\|_{L^{4}_{t,x}}.
\end{align*}
If instead $N_1\sim N_2$, then we write $N_3= 2^{-j}N_2$ and we similarly obtain
\begin{align*}
\bigg|\sum_{ \substack{N_1,N_2,N_3 \\ N_1\sim N_2}} I_{N_1,N_2,N_3}^{(1)}  \bigg|  
&  = \bigg|   \sum_{ N_2} \sum_{j=0}^{\log_{2}(N_2)}   \int_{\R^2}  \P_{2^{-j}N_2} B(\wt{\P}_{N_2} w_T , \P_{N_2}v) \cj{ \Q_{\ges 2^{-j}N_2^2 }\P_{2^{-j}N_2}h_T} dx dt    \bigg| \\
& \les \sum_{ N_2} \sum_{j=0}^{\log_{2}(N_2)} 2^{\frac{j}{2}} N_{2}^{-1} \|\P_{2^{-j}N_2} B(\wt{\P}_{N_2} w_{T} , \P_{N_2}v) \|_{L^{2}_{t,x}} \|\P_{2^{-j}N_2}h_{T}\|_{V^{2}_{S}L^2_x} \\
& \les \|h_T\|_{Y^0} \|w_T\|_{\wt{L^4_{t,x}}} \|v\|_{L^{4}_{t,x}}.
\end{align*}

\noi
\underline{\textbf{Case $\l=2$:}}
Again, we consider first the case when $N_1\sim N_3$. By summing over $N_1\sim N_3$, integrating by parts, using $N_2= 2^{-j}N_3$, \eqref{Qgain}, \eqref{Qbddness}, and \eqref{L4}, and \eqref{U2Xsb}, we have
\begin{align*}
&\bigg|\sum_{ \substack{N_1,N_2,N_3 \\ N_1\sim N_3}} I_{N_1,N_2,N_3}^{(2)}  \bigg|  
  = \bigg|   \sum_{ N_3} \sum_{j=0}^{\log_{2}(N_3)}   \int_{\R^2}  B(\Q_{\ges 2^{-j} N^2_3}\wt{\P}_{N_3} w_T , \P_{2^{-j}N_3}v) \cj{ \Q_{\ll 2^{-j} N^2_3 }\P_{N_3}h_T} \, dx dt    \bigg| \\
& =  \bigg|   \sum_{ N_3} \sum_{j=0}^{\log_{2}(N_3)}   \int_{\R^2}  \dx^{-1} \Q_{\ges 2^{-j} N^2_3}\wt{\P}_{N_3} w_T  \cdot \dx\P_{-} \P_{2^{-j}N_3}v \cdot \cj{\P_{-,\text{hi}}\dx \Q_{\ll 2^{-j}N^2_3 }\P_{N_3}h_T} \, dx dt    \bigg| \\
& \les \sum_{ N_3} \sum_{j=0}^{\log_{2}(N_3)}  \| \Q_{\ges 2^{-j} N^2_3}\wt{\P}_{N_3} w_T\|_{L^{2}_{t,x}} \| \dx\P_{-} \P_{2^{-j}N_3}v \|_{L^{4}_{t,x}} \| \P_{-,\text{hi}}\dx \Q_{\ll 2^{-j} N^2_3 }\P_{N_3}h_T\|_{L^{4}_{t,x}} \\
&  \les  \sum_{ N_3}  \sum_{j=0}^{\log_{2}(N_3)}2^{-\frac{j}{2}} \| \wt{\P}_{N_3}w_T\|_{U^{2}_{S}L^2_x} \|\P_{2^{-j}N_3}v\|_{L^{4}_{t,x}} \| \P_{N_3}h_T\|_{V^{2}_{S}L^2_x}  \\
& \les \|h_T\|_{Y^0} \|w_T\|_{Z^0} \|v\|_{L^{4}_{t,x}}.
\end{align*}

\noi
If instead $N_1 \sim N_2$, then we argue similarly:
\begin{align*}
\bigg|\sum_{ \substack{N_1,N_2,N_3 \\ N_1\sim N_2}} I_{N_1,N_2,N_3}^{(2)}  \bigg|  
&  = \bigg|   \sum_{ N_2} \sum_{j=0}^{\log_{2}(N_2)}   \int_{\R^2}  B(\Q_{\ges 2^{-j} N^2_2}\wt{\P}_{N_2} w_T , \P_{N_2}v) \cj{ \Q_{\ll 2^{-j} N^2_2 }\P_{2^{-j}N_2}h_T} \, dx dt    \bigg|\\
&  \les  \sum_{ N_3}  \sum_{j=0}^{\log_{2}(N_3)}2^{-\frac{j}{2}} \| \wt{\P}_{N_2}w_T\|_{U^{2}_{S}L^2_x} \|\P_{N_2}v\|_{L^{4}_{t,x}} \| \P_{2^{-j}N_2}h_T\|_{V^{2}_{S}L^2_x}  \\
& \les \|h_T\|_{Y^0} \|w_T\|_{Z^0} \|v\|_{\wt{L^{4}_{t,x}}}.
\end{align*}

\noi
\underline{\textbf{Case $\l=3$:}}
In this case, to avoid logarithmic losses, we need to sum over all of the dyadics beforehand and argue on the space-time Fourier side as in \cite[Proposition 3.5]{MP}. 
By using Parseval's formula and by summation over the dyadics such that $N_2\gg 1$, 
\begin{align*}
&\bigg| \sum_{N_1,N_2, N_3}I^{(3)}_{N_1,N_2,N_3} \bigg|  \\
& \les \int_{\mathcal{R}} |\ft h_T(\tau,\xi)|  \frac{ |\xi|}{|\xi-\xi_2|} |\ft w_{T}(\tau-\tau_2,\xi-\xi_2)| \frac{|\xi_2|^2}{\jb{\tau_2-\xi_2^2}} \frac{\jb{\tau_2-\xi_2^2}}{|\xi_2|} |\ft v(\tau_2,\xi_2)| d\tau_2 d\tau d\xi_2 d\xi,
\end{align*}
where 
\begin{align*}
\mathcal{R} : = \{ (\xi,\xi_2,\tau,\tau_2):\,  |\xi-\xi_2|\geq |\xi_2| \gg 1, |\tau_2 -\xi_2^2| \gg \max(|\tau -\xi^2|, |\tau-\tau_2 -(\xi-\xi_2)^2|)\}.
\end{align*}
In view of the restriction to $\mathcal{R}$ and \eqref{resrel}, we have $ |\tau_2 -\xi_2^2|  \ges |\xi||\xi_2|$, and using \eqref{L4V2}, we have
\begin{align*}
&\les \int_{\mathcal{R}}  |\ft h_T(\tau,\xi)|  |\ft w_{T}(\xi-\xi_2,\tau-\tau_2)| \frac{\jb{\tau_2-\xi_2^2}}{|\xi_2|} |\ft v(\tau_2,\xi_2)| d\tau_2 d\tau d\xi_2 d\xi \\
& \les \| \mathcal{F}_{t,x}^{-1}\{|\ft h_T|\}\|_{L^{4}_{t,x}}\| \mathcal{F}_{t,x}^{-1}\{ |\ft w_{T}|\}\|_{L^{4}_{t,x}} \| v\|_{X^{-1,1}_{BO}}\\
& \les (1+T^{\frac 12})^2 \| \wt{h}\|_{V^{2}_{S}L^2_x} \|\ind_{[0,T)} \wt{w}\|_{V^{2}_{S}L^2_x} \|\wt{v}\|_{X^{-1,1}_{BO}}.
\end{align*}
This completes the proof of \eqref{BO10}.
\end{proof}

The second estimate for the BO-part concerns the nonlinear term $\dx[ e^{-i\rho F} |u|^2]$. As discussed earlier, even though $u$ is $\frac 12$-degree smoother than $v$, we still need to recover an additional $\frac 12$-degree of smoothing to control the derivative. The key for this is the bilinear Strichartz estimate \eqref{bilin2}. 
The remaining term $\dx \Pbhip[ \Pblo e^{-i\rho F} \P_{-}\dx v]$ is of no concern due to the favourable frequency projections.

\begin{lemma} \label{LEM:BO2}
Let $s\geq 0$ and $\dl>0$.
For $j=1,2$, let $F_{j}$ denote two real-valued functions such that $\dx F_j =v_j$. Let $\V_{12}:= \|v_1\|_{L^{\infty}_{T}L^2_x}+ \|v_2\|_{L^{\infty}_{T}L^2_x}$ and $G: = e^{-i\rho F_1}-e^{-i\rho F_2}$. 
 Then, 
\begin{align}
\big\| \dx \PbHIp[& (e^{-i\rho F_1}-e^{-i\rho F_2}) u_1 \cj{u_2}]\big\|_{N^{s}_{T}} \notag  \\
&\les T^{\frac 12} \big( \|V\|_{L^{2}_{T,x}}+ \|v_2\|_{L^{2}_{T,x}} \| G \|_{L^{\infty}_{T,x}} ) \notag  \\
&\quad \times   \Big( \|u_1\|_{X^{s+\frac12, \frac 12+\dl}_{S;T}} \| u_2\|_{X^{\frac 12, \frac 12+}_{S;T}} + \| u_1\|_{X^{\frac12, \frac 12+}_{S;T}} \| u_2\|_{X^{s+\frac 12, \frac 12+\dl}_{S;T}}\Big) \notag \\
& \quad + \|u_1\|_{X^{\frac 12,\frac 12}_{S;T}}\|u_2\|_{X^{\frac 12,\frac 12}_{S;T}} \Big( \|J^{s}_{x}V\|_{\wt{L^{4}_{T,x}}} + \V_{12}\|J^{s}_x v_2\|_{\wt{L^{4}_{T,x}}} \|G\|_{L^{\infty}_{T,x}}  \notag \\  
&  \qquad +  (1+\V_{12})^2 ( \|v_1\|_{L^{\infty}_{T}H^{s}_x}+\|v_2\|_{L^{\infty}_{T}H^s_x})\|V\|_{L^{\infty}_{T}L^{2}_x} \notag \\
& \qquad +(1+\V_{12})^3 (\|V\|_{L^{\infty}_{T}H^{s}_x} + \|v_2\|_{L^{\infty}_{T}H^{s}_x}\|G\|_{L^{\infty}_{T,x}}) \Big), \label{BO2}\\
\big\| \dx  \Pbhip[ &\Pblo( e^{-i\rho F_1}- e^{-i\rho F_2})\mathbf{P}_{-}\dx v] \big\|_{N^{s}_{T}}  \les T^{\frac 12} \|e^{-i\rho F_1}-e^{-i\rho F_2}\|_{L^{\infty}_{T,x}}\|v\|_{L^{\infty}_{T}L^{2}_x} .\label{BO3}
\end{align}
\end{lemma}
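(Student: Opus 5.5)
Both estimates go through the duality characterisation of $N^{s}_{T}$ in Lemma~\ref{LEM:linZsb}. For \eqref{BO2} we pair $\dx\PbHIp[G u_1\cj{u_2}]$ against $h\in Y^{-s}_T$ with $\|h\|_{Y^{-s}_T}=1$. Then, as in the proof of Lemma~\ref{LEM:BO1}, we use \eqref{Indbdd} to pass to extensions localised to $[0,T)$. Next we decompose dyadically: $h$ at frequency $N$, $G$ at frequency $N_0$, and the product $u_1\cj{u_2}$ at frequency $N_{12}$. By Lemma~\ref{LEM:PNe}, $G$ is essentially $N_0^{-1}$ times $V+v_2G$ localised near $N_0$, plus faster-decaying tails, since $\dx G = i\rho(V e^{i\rho F_1}+v_2 G)$ up to sign conventions. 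The proof then splits into two regimes.

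\emph{Regime (a): $N_0\ll N$, so $N_{12}\sim N$.} Here $G$ contributes at most $\|G\|_{L^\infty}$ or $\|\P_{N_0}G\|_{L^\infty}$, bounded via \eqref{Ginfty}. The derivative $\dx\sim N$ must be absorbed by $u_1\cj{u_2}$. We place $N^{-s}\P_N h$ in $L^{4}_{t,x}$, using \eqref{L4} with $Y^{-s}$, so it costs $N^{s}$. For $\P_{N}(u_1\cj{u_2})$ we use the bilinear Strichartz estimate \eqref{bilin2}, which gives $\|\P_N(u_1\cj{u_2})\|_{L^2_{t,x}}\les N^{-1/2}\|u_1\|_{X^{0,\frac12+}}\|u_2\|_{X^{0,\frac12+}}$. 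The remaining $N^{1/2+s}$ is carried by whichever $u_j$ has the highest frequency, putting it in $X^{s+\frac12,\frac12+\dl}$ and the other in $X^{\frac12,\frac12+}$. The $T^{\frac12}$ factor comes from placing the $G$-type factor in $L^2_T$. To obtain this, we rewrite $G$ via $\|\P_{N_0}G\|_{L^\infty_x}\les N_0^{-1/2}\|\P_{\sim N_0}(V e^{i\rho F_1}+v_2G)\|_{L^2_x}$ (Bernstein), which gives $\|V\|_{L^2_{T,x}}+\|v_2\|_{L^2_{T,x}}\|G\|_{L^\infty}$. Alternatively, the slack from $X^{\cdot,\frac12+\dl}$ versus $X^{\cdot,\frac12+}$ yields $T^{0+}$, and Hölder in time gives the stated power. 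This regime produces the first group of terms in \eqref{BO2}.

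\emph{Regime (b): $N_0\gtrsim N$.} Here $G$ carries the high frequency. We write $\dx\P_N[\P_{N_0}G\cdot u_1\cj{u_2}]$ and gain $N_0^{-1}$ from $G$ through \eqref{PNe2} with $q=4$. This leaves $N^{1+s}N_0^{-1}\les N_0^{s}$ acting on $\wt\P_{N_0}V$ (or $v_2G$), measured in $\wt{L^4_{T,x}}$. In this regime $u_1\cj{u_2}$ is placed in $L^{2}_{t,x}$ or $L^4$ using $X^{\frac12,\frac12}$ and Sobolev/Strichartz (the $\frac12$ derivative on each $u_j$ is ample), and $h$ goes in $L^4$ via \eqref{L4}. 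The tail terms of \eqref{PNe2}, with factors $N_0^{-1-\frac14}$ and $N_0^{-2-\frac14}$, are summable against $N_0^{s}$ and account for the $L^\infty_TH^s$ terms with their polynomial prefactors $(1+\V_{12})^{k}$.

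\emph{Main obstacle.} The key difficulty is Regime (a) at $s=0$: \eqref{bilin2} has to supply exactly $N^{-1/2}$ with no logarithmic loss. The $X^{0,\frac12+}$ norms on $u_j$ (rather than $U^2$) allow this, and the dyadic sum over $N$ closes by Cauchy--Schwarz using the $\l^2$ structure of $Y^{-s}$ and of the $X^{s+\frac12}$ norm. One must also ensure that the high$\times$high$\to$low case for $u_1\cj{u_2}$ is handled. Here \eqref{bilin2} is stated for $\P_N[u\cj{v}]$ without a separation hypothesis, so it applies directly.

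For \eqref{BO3}, the frequency support forces $\P_{-}\dx v$ to be localised at $|\xi|\les1$. Indeed, $\Pbhip$ of a product of a frequency-$\les1$ factor (in the sense of \eqref{PloeiF}, arguing as in \eqref{argument}) with a negative-frequency function vanishes unless the output and input frequencies are $\sim1$. So all derivatives are harmless by Bernstein. Using the $L^1_TL^2_x\to N^{s}_T$ bound (from Lemma~\ref{LEM:linZsb} and $L^\infty_tL^2\supset Y^{-s}$ duality) together with Hölder, we obtain $\|\cdot\|_{N^s_T}\les T\|G\|_{L^\infty}\|v\|_{L^\infty_TL^2}\le T^{\frac12}(\cdots)$ for $T\le1$.
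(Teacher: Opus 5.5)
Your argument follows the paper's proof. You dualise against $Y^{-s}_T$ and split into two cases. In the first, $G$ sits at frequency well below the output (the paper's \eqref{BO21} together with the case $N_4\gg N_3$ of \eqref{BO22}); then $G$ is taken out in $L^\infty$ and \eqref{bilin2} absorbs half of the derivative. In the second, $G$ sits at frequency $\ges$ the output (the case $N_4\les N_3$); then \eqref{PNe2} gains $N_0^{-1}$, all four factors go into $L^4_{T,x}$, and the sums close using the spare half-derivative on $u_j$ or Cauchy--Schwarz in $N\sim N_0$. Your treatment of \eqref{BO3} is the paper's frequency-support argument. Two things in regime (a) need correcting.

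First, the H\"older exponents in regime (a) do not close. With $\P_N(u_1\cj{u_2})\in L^2_{t,x}$ and the $G$-factor in $L^\infty_x$ (or $L^2_TL^\infty_x$), the dual function must be measured in $L^2$ in space, not in $L^4_{t,x}$. The paper puts $\ind_{[0,T)}\P_N h$ in $L^2_{t,x}$ and uses $\|\ind_{[0,T)}h\|_{L^2_{t,x}}\le T^{\frac12}\|h\|_{L^\infty_tL^2_x}\les T^{\frac12}\|h\|_{V^2_SL^2}$. Equivalently, you can take $h\in L^\infty_TL^2_x$ and $G\in L^2_TL^\infty_x$. This, not the modulation slack, is the source of the factor $T^{\frac12}$.

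Second, the Bernstein gain $\|\P_{N_0}G\|_{L^\infty_x}\les N_0^{-\frac12}\|\P_{N_0}(Ve^{-i\rho F_1}+v_2G)\|_{L^2_x}$ is only available for $N_0\gg1$. The piece $\Pblo G$ is controlled only by $\|G\|_{L^\infty_{T,x}}$, so it produces
$T^{\frac12}\|G\|_{L^\infty_{T,x}}\big(\|u_1\|_{X^{s+\frac12,\frac12+\dl}_{S;T}}\|u_2\|_{X^{\frac12,\frac12+}_{S;T}}+\|u_1\|_{X^{\frac12,\frac12+}_{S;T}}\|u_2\|_{X^{s+\frac12,\frac12+\dl}_{S;T}}\big)$.
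This term is not among those on the right of \eqref{BO2}, so regime (a) does not yield only the first group, as you claim. The problem is not peculiar to your route: the paper's bound for \eqref{BO21} gives exactly this term. In fact \eqref{BO2} as literally stated fails without it. Take $v_1=v_2=0$, $F_1\equiv0$, $F_2\equiv c$ with $e^{-i\rho c}\neq1$; the right-hand side vanishes, while the left-hand side equals $|1-e^{-i\rho c}|\,\|\dx\PbHIp[u_1\cj{u_2}]\|_{N^s_T}$. You should simply add this term. It is harmless in Step 2, where $\|G\|_{L^\infty_{T,x}}$ is controlled by Lemma~\ref{LEM:Fdiffs}.
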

\begin{proof}
We begin with showing \eqref{BO2}. 
By Lemma~\ref{LEM:linXsb} we reduce to controlling both of:
\begin{align}
\int_{0}^{T} \int_{\R} \cj{ \jb{\dx}^{s} h} \, \dx \PbHIp \big[  \Pblo G(F_1,F_2)  u_1 \cj{u_2}  \big] dx dt,  \label{BO21}\\
\int_{0}^{T} \int_{\R} \cj{ \jb{\dx}^s h} \, \dx \PbHIp \big[  \Pbhi G(F_1,F_2)  u_1 \cj{u_2}  \big] dx dt. \label{BO22}
\end{align}
Consider first the contribution from \eqref{BO21}. By integration by parts, we have 
\begin{align}
\eqref{BO21} & = -\int_{0}^{T} \int_{\R}  \Pblo G(F_1,F_2) \cdot \Pblo\big[  \cj{ \dx \jb{\dx}^{s} \P_{-,\text{HI}} h} \cdot   u_1 \cj{u_2}  \big] dx dt. \label{BO212}
\end{align}
We take arbitrary extensions of $h$ and $u_1, u_2$, which we denote as $\wt{h}$ and $\wt{u_1}, \wt{u_2}$, respectively.
 We then associate a copy of the indicator $\ind_{[0,T)}$ with both of $\wt{h}$ and $\Pblo G(F_1,F_2)$, and write $\wt{h}_{T}:= \ind_{[0,T)}\wt{h}$.
By dyadic decomposition, we then consider 
\begin{align*}
\sum_{N_1, N_2,N_3}\int_{\R^2}  \ind_{[0,T)} \Pblo G(F_1,F_2) \cdot \Pblo\big[  \cj{ \dx \jb{\dx}^{s} \P_{-,\text{HI}} \P_{N_3} \wt{h}_T} \cdot   \P_{N_1}\wt{u_1} \cdot \cj{ \P_{N_2} \wt{u_2}}  \big] dx dt.
\end{align*}
In view of the outer projection $\Pblo$ we may place a projection $\wt{\P}_{N_3}$ outside of the product  $\P_{N_1}\wt{u_1}\cdot \cj{ \P_{N_2} \wt{u_2}} $. Then, by Cauchy-Schwarz and \eqref{bilin2}, we have
\begin{align*}
&\sum_{N_1, N_2,N_3}\int_{\R^2}  \ind_{[0,T)} \Pblo G(F_1,F_2) \cdot \Pblo\big[  \cj{ \dx \jb{\dx}^{s} \P_{-,\text{HI}} \P_{N_3} \wt{h}_T} \cdot  \wt{\P}_{N_3}[  \P_{N_1}\wt{u_1} \cdot  \cj{ \P_{N_2} \wt{u_2}}]  \big] dx dt \\
& \les \| \Pblo G(F_1,F_2)\|_{L^{\infty}_{T,x}} \sum_{N_1,N_2,N_3} N_3^{1+s} \| \P_{N_3} \wt{h}\|_{L^{2}_{t,x}}
\| \wt{\P}_{N_3}[  \P_{N_1}\wt{u_1}  \cdot \cj{ \P_{N_2} \wt{u_2}}]\|_{L^{2}_{t,x}}  \\
&\les \|  G(F_1,F_2)\|_{L^{\infty}_{T,x}} \sum_{N_1,N_2,N_3} N_{3}^{\frac 12+s}  \| \P_{N_3} \wt{h}_T\|_{L^{2}_{t,x}} \| \P_{N_1}\wt{u_1} \|_{X^{0,\frac 12+}_{S}} \|\P_{N_2}\wt{u_2}\|_{X_{S}^{0,\frac 12+}} \\
&\les  \|  G(F_1,F_2)\|_{L^{\infty}_{T,x}}  \| \wt{h}_T\|_{L^{2}_{t,x}} \Big( \| \wt{u_1}\|_{X^{s+\frac12, \frac 12+}_{S}} \| \wt{u_2}\|_{X^{\frac 12, \frac 12+}_{S}} + \| \wt{u_1}\|_{X_{S}^{\frac12, \frac 12+}} \| \wt{u_2}\|_{X^{s+\frac 12, \frac 12+}_{S}}\Big).
\end{align*}
By Cauchy-Schwarz and \eqref{ZYembed}, we have 
\begin{align*}
\|  \wt{h}_T\|_{L^{2}_{t,x}} \les T^{\frac 12} \|\wt{h}\|_{L^{\infty}_{t}L^2_x} \les T^{\frac 12} \|\wt{h}\|_{V^{2}_{S}L^2_x}.
\end{align*}
By taking an infimum over extensions, we then obtain
\begin{align*}
|\eqref{BO212}| & \les  T^{\frac 12} \|  G(F_1,F_2)\|_{L^{\infty}_{T,x}}  \| h\|_{Y^0_T}   \Big( \|u_1\|_{X^{s+\frac12, \frac 12+}_{S;T}} \| u_2\|_{X^{\frac 12, \frac 12+}_{S;T}} + \| u_1\|_{X^{\frac12, \frac 12+}_{S;T}} \| u_2\|_{X^{s+\frac 12, \frac 12+}_{S;T}}\Big), 
\end{align*}
which completes the estimate for \eqref{BO21}.

Now we consider the term \eqref{BO22}. By integration by parts and using  a dyadic decomposition, we reduce to estimating
\begin{align*}
\sum_{N_1,N_2,N_3,N_4} \int_{0}^{T} \int_{\R} \P_{N_3} G(F_1,F_2) \cdot   \cj{ \dx \jb{\dx}^{s} \P_{-,\text{HI}} \P_{N_4} h} \cdot   \P_{N_1} u_1 \cdot \cj{\P_{N_2} u_2}  dx dt
\end{align*}
where $N_3 \ges 1$. If $N_4 \gg N_3$, then we may proceed as we did for \eqref{BO21} using that in the case $N_4\sim \max(N_1,N_2)$ and $N_3 \gg \min(N_1,N_2)$, we sum over $N_3$ using 
\begin{align}
&\| \P_{N_3}G(F_1,F_2)\|_{L^{\infty}_{T,x}}  \notag\\
&\les N_3^{-1}  \big\{\| \P_{N_3}( (v_1-v_2)e^{-i\rho F_1})\|_{L^{\infty}_{T,x}} +\| \P_{N_3}(v_2 (e^{-i\rho F_1}-e^{-i\rho F_2} )\|_{L^{\infty}_{T,x}}  \big\} \notag \\
& \les N_{3}^{-\frac 12+}\{ \|v_1-v_2\|_{L^{2}_{T,x}} + \|v_2\|_{L^{2}_{T,x}} \|G(F_1,F_2)\|_{L^{\infty}_{T,x}}\}.
\end{align}
If instead $N_{4} \les N_3$, then we place all functions into $L^{4}_{T,x}$ and use \eqref{PNe2}. To perform the dyadic summations for the terms $\|\P_{N_3}V\|_{L^{4}_{T,x}}$ and $\|\P_{N_3}v_j\|_{L^{4}_{T,x}}$, we either gain from $N_3 \les N_1 \vee N_2$ and the $\frac12$-degree of regularity of the $u_1$ or $u_2$ terms, or, we have $N_4 \sim N_3$ and we use
\begin{align*}
\sum_{N_4 \sim N_3} N_{3}^{s}\| \P_{N_3}V\|_{L^{4}_{T,x}} \|\P_{N_4}h\|_{L^{4}_{T,x}} \les \| J^{s}_x V\|_{\wt{L^{4}_{T,x}}}  \|h\|_{\wt{L^{4}_{T,x}}}  \les  \| J^{s}_x V\|_{\wt{L^{4}_{T,x}}} \|h\|_{Y^{0}_{T}}.
\end{align*}
This then establishes \eqref{BO2}.

We now consider the easier estimate \eqref{BO3}. By arguments akin to \eqref{argument}, we have 
\begin{align*}
\Pbhip[ \Pblo G  \cdot\P_{-}\dx v]  =\PbLO \Pbhip[  \Pblo G  \cdot \P_{-}\dx \PbLO v],
\end{align*}
so that 
\begin{align*}
\big\| \dx  \Pbhip[ \Pblo G \cdot \mathbf{P}_{-}\dx v] \big\|_{N^{s}_{T}} & \les \big\| \Pblo G \cdot \mathbf{P}_{-}\dx \PbLO v \big\|_{N^{0}_{T}} \les \|G\|_{L^{\infty}_{T,x}} \|v\|_{L^{2}_{T,x}}.
\end{align*}
This proves \eqref{BO3}, and thus completes the proof of the lemma.
\end{proof}

\subsection{The Schr\"{o}dinger-part}
We move onto proving multilinear estimates for the Schr\"{o}dinger-part. Again, the main point in the following estimates is to recover $\frac 12$-degree of smoothing through using the bilinear Strichartz estimate \eqref{bilin1}, which is possible because of our bootstrap assumption that $w\in Z^{s}_{T}$.

\begin{lemma}\label{LEM:S1}
Let $0\leq s<\frac 32$ and $\dl>0$ sufficiently small. There exists $\ta>0$ such that  
\begin{align}
\| &e^{-i\rho F} u \,\cj{\PbHI w} \|_{X^{s+\frac 12,-\frac 12+2\dl}_{S;T}}  \notag\\
& \les T^{\ta} \Big\{  \|u\|_{L^{\infty}_{T}L^{2}_x}\|w\|_{L^{\infty}_{T}L^{2}_x} + \|u\|_{X^{s+\frac 12, \frac 12+\dl}_{S;T}} \|w\|_{Z^{0}_{T}}  +\|u\|_{X^{0,\frac 12+\dl}_{S;T}} \|w\|_{Z^{s}_{T}} \notag\\
&  \qquad + \|u\|_{X^{0,\frac 12}_{S;T}} \|w\|_{Z^{0}_{T}}(1+\|v\|_{L^{\infty}_{T}L^2_x})[ \|v\|_{L^{\infty}_{T}L^2_x}^{3} + (1+\|v\|_{L^{\infty}_{T}L^2_x})\|v\|_{L^{\infty}_{T}H^{s}_x}]   \Big\}     \label{uw}
\end{align}
Moreover, for $j=1,2$, let $F_{j}$ denote two real-valued functions such that $\dx F_j =v_j$. Let $\V_{12}:= \|v_1\|_{L^{\infty}_{T}L^2_x}+ \|v_2\|_{L^{\infty}_{T}L^2_x}$ and $G: = e^{i\rho F_1}-e^{i\rho F_2}$. Then,
\begin{align}
\| &G(F_1,F_2) u\, \cj{\PbHI w} \|_{X^{s+\frac 12,-\frac 12+2\dl}_{S;T}}  \notag \\
&  \les T^{\ta} \Big(  (1+\V_{12})^{3}\|V\|_{L^{\infty}_{T}H^s_x}+\V_{12}(1+\V_{12})^{2}\{ \|v_1\|_{L^{\infty}_{T}H^{s}_x} +  \|v_2\|_{L^{\infty}_{T}H^{s}_x}\} \|G\|_{L^{\infty}_{T,x}} \Big) \|u\|_{X^{0,\frac 12}_{T}} \|w\|_{Z^{0}_{T}} \notag \\ 
& \quad +T^{\ta}\Big(  \|u\|_{L^{\infty}_{T}L^{2}_x}\|w\|_{L^{\infty}_{T}L^{2}_x} + \|u\|_{X^{s+\frac 12, \frac 12+\dl}_{S;T}} \|w\|_{Z^{0}_{T}}  +\|u\|_{X^{0,\frac 12+\dl}_{S;T}} \|w\|_{Z^{s}_{T}}   \Big) \|G\|_{L^{\infty}_{T,x}}.    \label{uw2}
\end{align}
\end{lemma}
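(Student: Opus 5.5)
By duality and the $X^{s,b}$ framework, it suffices to bound
\begin{align*}
\int_{0}^{T}\!\!\int_{\R} e^{-i\rho F}\, u\, \cj{\PbHI w}\, \cj{h}\, dx\,dt
\end{align*}
for $h$ with $\|h\|_{X^{-s-\frac12,\frac12-2\dl}_{S}}\le 1$. The factor $T^{\ta}$ comes from the gap between the exponent $\frac12-2\dl$ on $h$ and the exponents $\frac12+\dl$ or $\frac12$ on $u$: we use \eqref{L4Strich} for $h$, and the standard time-localisation gain in $X^{s,b}$ spaces when $b<\frac12$.

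We dyadically decompose $u=\sum_{N_1}\P_{N_1}u$ and $\PbHI w=\sum_{N_2}\P_{N_2}w$, with $N_2\ges 8$. We also split $e^{-i\rho F}=\Pblo e^{-i\rho F}+\sum_{N_3\ge 2}\P_{N_3}e^{-i\rho F}$, where $\Pblo$ is understood via \eqref{PbloE}. The $\Pblo$ part is handled using the localisation identity \eqref{MPeFg}, i.e. the argument \eqref{argument}: it does not change output frequencies beyond $O(1)$, so it behaves like a bounded multiplier in $L^\infty$. We then split into three cases.

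\textbf{Case (a): $N_3\ll\max(N_1,N_2)$.} The output frequency is $N\sim\max(N_1,N_2)$.

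If $N_1\gg N_2$, we place $J^{s+\frac12}$ on $u$, getting the weight $N_1^{s+\frac12}$. We pair $\P_{N_1}u$ with $\P_{N_2}w$ in the bilinear Strichartz estimate \eqref{bilin1}, using one factor in $X^{0,\frac12+}$ and the other in $U^2_S$, which produces the gain $N_1^{-\frac12}$. The remaining factors $h$ and $e^{-i\rho F}$ are put in $L^2$ and $L^\infty$. After the gain, the weight left is exactly $N_1^{s}\cdot N_1^{\frac12}\cdot N_1^{-\frac12}$, and we match it against $\|h\|_{X^{-s-\frac12}}$. This is the term $\|u\|_{X^{s+\frac12,\frac12+\dl}}\|w\|_{Z^0}$.

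If $N_2\gg N_1$, the derivative $N_2^{s+\frac12}$ falls on $w$. Here $N_2^{s}$ is absorbed by $\|w\|_{Z^s}$, and the remaining $N_2^{\frac12}$ is cancelled by the bilinear gain $N_2^{-\frac12}$ from \eqref{bilin1} applied to the pair $(\P_{N_1}u,\P_{N_2}w)$. This is precisely why $w$ has to be placed in $U^2$ rather than $X^{0,\frac12}$. This gives the term $\|u\|_{X^{0,\frac12+\dl}}\|w\|_{Z^s}$.

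If $N_1\sim N_2$, there is no bilinear gain available directly. Instead we pair $\P_{N_1}u$ with the low-frequency output $h$ when the output is small, or otherwise use \eqref{bilin1} with $h$ itself. The output at frequency $N\ll N_1$ carries weight $N^{s+\frac12}\le N_1^{s+\frac12}$. We then split the available derivatives between $u$ and $w$ and again use \eqref{bilin1} on $(\P_N h,\P_{N_1}u)$. The low-output part bounded by $O(1)$ frequencies gives the trivial term $\|u\|_{L^\infty L^2}\|w\|_{L^\infty L^2}$, with a factor $T$ to spare.

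\textbf{Case (b): $N_3\ges\max(N_1,N_2)$.} The exponential carries the high frequency. We use Lemma~\ref{LEM:PNe}, specifically \eqref{PNe1} with $q=4$ or with $q=\infty$ via \eqref{PNe3}. This gives decay $N_3^{-1-\frac1q}$ times $\|\P_{\ges N_3}v\|_{L^2}$ or $\|\P_{N_3}v\|_{L^q}$, which converts $N_3^{s+\frac12}$ into $\|v\|_{H^s}$ at the cost of $N_3^{-\frac12}$-type slack. The factors $u$ and $w$ are placed in $L^4_{T,x}$ via \eqref{L4Strich} and \eqref{L4}, and $h$ in $L^4$ too. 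The slack $N_3^{-\frac12}$, or $N_3^{-\frac14}$, pays for the dyadic sums. This yields the last line of \eqref{uw}, with the polynomial factors in $\|v\|_{L^2}$ coming from \eqref{PNe1}.

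\textbf{Main obstacle.} I expect the hardest point to be Case (a) with $N_2\gg N_1$ at the endpoint. Exactly $\frac12$ of a derivative must be recovered without an $\eps$ loss. This forces the mixed $U^2$--$X^{0,\frac12+}$ form of \eqref{bilin1}, together with careful handling of the time cutoff so that $T^{\ta}$ still appears; I would extract it from $u$'s $X^{0,\frac12+\dl}$ norm versus $X^{0,\frac12}$.

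\textbf{Difference estimate \eqref{uw2}.} This follows the same scheme with $G$ in place of $e^{-i\rho F}$. In the low-frequency cases we bound $\|G\|_{L^\infty}$ directly. In the high-$N_3$ case we use \eqref{PNe2} in place of \eqref{PNe1}, which produces the $\|V\|_{H^s}$ and $\|G\|_{L^\infty}\|v_j\|_{H^s}$ terms.
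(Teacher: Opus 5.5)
Your outline follows the paper's proof in all essentials. The shared steps are: duality; the orthogonality argument \eqref{argument}, which forces $N\les N_1\vee N_2$ for the $\Pblo e^{-i\rho F}$ piece; and \eqref{PNe1} when the exponential carries the output frequency. The decisive step is also the same: the bilinear estimate \eqref{bilin1} with $u\in X^{0,\frac12+}$ and $\P_{N_2}w\in U^2_S$ when $N_2\gg N_1$. That is the one endpoint-sensitive step, and you correctly explain why $w$ must sit in $U^2$. Your Case (b) is organised differently. You split by $N_3$ rather than by $N\gg N_1\vee N_2$, and you put all four factors in $L^4_{T,x}$. The paper instead applies Bernstein to $\P_{N_3}e^{-i\rho F}$ to reach $\|J^s_x\dx e^{-i\rho F}\|_{L^2_x}$, as in \eqref{uw12}. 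Your variant also closes, with an $N_3^{-\frac14}$ margin.

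Three individual steps would fail as written, though each is easily repaired.

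\textbf{The case $N_1\sim N_2$.} You cannot apply \eqref{bilin1} to $(\P_N h,\P_{N_1}u)$. The dual function only lies in $X^{0,\frac12-2\dl}$, below what the sharp $N_1^{-\frac12}$ gain requires. No gain is needed here anyway. Since $N\les N_1$, move $N^{s+\frac12}$ onto $u$ and use H\"older, placing $\P_Nh$ and $\P_{N_2}w$ in $L^4_{T,x}$ (by \eqref{L4Strich} and \eqref{L4}) and $J^{s+\frac12}_x\P_{N_1}u$ in $L^2_{T,x}$. Then $\|u\|_{X^{s+\frac12,0}_{S;T}}\les T^{\frac12}\|u\|_{X^{s+\frac12,\frac12+\dl}_{S;T}}$ supplies the time factor. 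The same H\"older argument also covers $N_1\gg N_2$, so bilinear Strichartz is needed only when $N_2\gg N_1$.

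\textbf{The source of $T^{\ta}$ when $N_2\gg N_1$.} You cannot extract $T^{\ta}$ by comparing the $X^{0,\frac12+\dl}_{S;T}$ and $X^{0,\frac12}_{S;T}$ norms of $u$. Testing on free solutions shows that passing between restriction norms with exponents $b\geq\frac12$ yields no positive power of $T$. Besides, \eqref{bilin1} needs $b>\frac12$ on $u$ anyway. The factor comes from $h$, which in this case is only placed in $L^2_{T,x}$, so $\|h\|_{L^2_{T,x}}\les T^{\frac12-2\dl}\|h\|_{X^{0,\frac12-2\dl}_{S;T}}$.

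\textbf{Dyadic summation in $N_1$.} In the same case, apply \eqref{bilin1} to $\P_{\ll N_2}u$ as a whole rather than summing over dyadic $N_1\ll N_2$. There is no spare power of $N_2$ to absorb the resulting $(\log N_2)^{\frac12}$ loss.
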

\begin{proof}

We begin with the more involved estimate \eqref{uw}. As the extra projection $\PbHI$ on $w$ plays no role, we ignore it.
 We split into two parts:
\begin{align}
\| e^{-i\rho F} u \cj{w} \|_{X^{s+\frac 12,-\frac 12+2\dl}_{S;T}}  \leq \| \Pblo(e^{-i\rho F}) u \cj{w} \|_{X^{s+\frac 12,-\frac 12+2\dl}_{S;T}} +\| \Pbhi( e^{-i\rho F} )u \cj{w} \|_{X^{s+\frac 12,-\frac 12+2\dl}_{S;T}} .
\label{uw1}
\end{align}
We first consider the term with $\Pblo$. By dyadic decomposition and duality, we estimate
\begin{align*}
  \sum_{N,N_1,N_2} \int_{0}^{T} \int_{\R}\cj{J^{s+\frac 12}_{x} \P_{N}h} \cdot \Pblo(e^{-i\rho F}) \P_{N_1}u \cdot \cj{\P_{N_2}w} \,dxdt =: \sum_{N,N_1,N_2} \I_{N, N_1,N_2},
\end{align*}
where $h\in X^{0,\frac 12-2\dl}_{S;T}$ with norm at most one.
If $N\les 1$, then we simply have
\begin{align*}
\sum_{N, N_1, N_2}  |I_{N, N_1,N_2}| \les (\sup_{N\les 1} \| \P_{N} h\|_{L^{2}_{T}L^{\infty}_{x}}) \|u\|_{L^{\infty}_{T}L^{2}_{x}} \|w\|_{L^{2}_{T,x}} \les T^{\frac 12} \|u\|_{L^{\infty}_{T}L^{2}_{x}} \|w\|_{L^{\infty}_{T}L^{2}_{x}}.
\end{align*}
We now assume that $N\gg 1$.
First, we argue that $N\les N_1 \vee N_2$. Indeed, if $N\gg N_1 \vee N_2$, then for each fixed $t\in [0,T]$, by integration by parts we have
\begin{align*}
\jb{ J^{s+\frac 12}_{x} \P_{N}h,  \Pblo(e^{-i\rho F}) \P_{N_1}u \cdot \cj{\P_{N_2}w}}_{L^2} & = \jb{\cj{\Pblo(e^{-i\rho F})} ,  J^{s+\frac 12}_{x} \P_{N}h \cdot \P_{N_1}u \cdot \cj{\P_{N_2}w}}_{L^2} \\
&  = \jb{\cj{\Pblo(e^{-i\rho F})} ,  \wt{\P}_{N}[ J^{s+\frac 12}_{x} \P_{N}h \cdot \P_{N_1}u \cdot\cj{\P_{N_2}w}]}_{L^2} \\
& = -\jb{\cj{\dx \Pblo(e^{-i\rho F})} ,  \dx^{-1}\wt{\P}_{N}[ J^{s+\frac 12}_{x} \P_{N}h \cdot \P_{N_1}u \cdot \cj{\P_{N_2}w}]}_{L^2} \\
& = 0,
\end{align*}
since in the penultimate line, we can now interpret $\Pblo$ as a projection to frequencies $\{|\xi|\les 1\}$, which is orthogonal to $\wt{\P}_{N}$ since $N\gg 1$. We split into three cases. 

\smallskip
\noi
\underline{$\bullet$ \textbf{Case 1:} $N_1\sim N_2$.}
By H\"{o}lder, Bernstein, Cauchy-Schwarz inequality, \eqref{L4} and \eqref{L4Strich}, we have 
\begin{align*}
\bigg|\sum_{N,N_1,N_2} \I_{N,N_1,N_2} \bigg|&\les   \sum_{N_1 \sim N_2 \ges N} N^{s+\frac 12} \|\P_{N}h\|_{L^{4}_{T,x}} N_1^{-s-\frac 12}  \|J^{s+\frac 12}\P_{N_1}u\|_{L^{2}_{T,x}} \|\P_{N_2}w\|_{L^4_{T,x}}\\
&\les \|h\|_{\wt{L^4_{T,x}} } \sum_{N_1 \sim N_2}\|J^{s+\frac 12}\P_{N_1}u\|_{L^{2}_{T,x}} \|\P_{N_2}w\|_{L^4_{T,x}} \\
&\les \|h\|_{\wt{L^4_{T,x}} }  \sum_{|\l|\leq 5} \sum_{N_2} \|J^{s+\frac 12}\P_{2^{\l}N_2}u\|_{L^{2}_{T,x}}  \|\P_{N_2}w\|_{L^4_{T,x}}  \\
& \les \|h\|_{X^{0,\frac 12-2\dl}_{S;T}} \|u\|_{X^{s+\frac 12, 0}_{S;T}} \| w\|_{Z^{0}_{T}} \\
&\les T^{\frac 12}  \|h\|_{X^{0,\frac 12-2\dl}_{S;T}} \|u\|_{X^{s+\frac 12, \frac 12+\dl}_{S;T}} \| w\|_{Z^{0}_{T}}.
\end{align*}

\smallskip
\noi
\underline{$\bullet$ \textbf{Case 2:} $N_1\gg  N_2.$}
In this case, we have
\begin{align*}
\bigg|\sum_{N,N_1,N_2} \I_{N,N_1,N_2}\bigg| &\les \sum_{N\sim N_1} N^{s+\frac 12} \|\P_{N}h\|_{L^4_{T,x}} \|\P_{N_1}u\|_{L^{2}_{T,x}} \|\P_{\ll N_1}w\|_{L^{4}_{T,x}} \\
&\les  \|h\|_{X^{0,\frac 12-2\dl}_{S;T}} \|u\|_{X^{s+\frac 12, 0}_{S;T}} \| w\|_{Z^{0}_{T}} \\
&\les T^{\frac 12} \|h\|_{X^{0,\frac 12-2\dl}_{S;T}} \|u\|_{X^{s+\frac 12, \frac 12+\dl}_{S;T}} \| w\|_{Z^{0}_{T}}
\end{align*}

\smallskip
\noi
\underline{$\bullet$ \textbf{Case 3:} $N_1\ll  N_2$.}
In this case, we need to gain additional smoothing through the bilinear Strichartz estimate with distant frequency supports \eqref{bilin1}. We have 
\begin{align*}
\bigg|\sum_{N,N_1,N_2} \I_{N,N_1,N_2}\bigg| &\les\sum_{N\sim N_2} N^{s+\frac 12} \| \P_{N}h\|_{L^{2}_{T,x}} \| \P_{\ll N_2}u \cdot \P_{N_2}w\|_{L^2_{T,x}}  \\
&\les \sum_{N\sim N_2} N^{s+\frac 12} N_{2}^{-\frac 12} \|\P_{N}h\|_{L^{2}_{T,x}} \|u\|_{X^{0,\frac 12+}_{S;T}} \|\P_{N_2} w\|_{U^{2}_{S; T}L^2_x}  \\
& \sim \|u\|_{X^{0,\frac 12+}_{S;T}} \sum_{|k|\leq 5}  \sum_{N_2} N_{2}^{s} \|\P_{N_2}w\|_{U^{2}_{S; T}L^2_x} \|\P_{2^{k}N_2}h\|_{L^{2}_{T,x}} \\
&\les \| h\|_{X^{0,0}_{S;T}} \|u\|_{X^{0,\frac 12+}_{S;T}} \| w\|_{Z^{s}_{T}} \\
&\les T^{\frac 12-2\dl}\| h\|_{X^{0,\frac 12-2\dl}_{S;T}} \|u\|_{X^{0,\frac 12+}_{S;T}} \| w\|_{Z^{s}_{T}}
\end{align*}
This completes the proof for the contribution to \eqref{uw} coming from $\Pblo e^{i\rho F}$. As for the contribution from $\Pblo (e^{i\rho F_1} -e^{i\rho F_2})$ in \eqref{uw2}, we apply the same above argument noting that in all of the cases we placed $\Pblo e^{i\rho F}$ into $L^{\infty}_{T,x}$.

For the contribution in \eqref{uw1} from $\Pbhi e^{-i\rho F}$, we use further dyadic decomposition and reduce to estimating:
\begin{align*}
  \sum_{N,N_1,N_2, N_3} \int_{0}^{T} \int_{\R} \cj{J^{s+\frac 12}_{x} \P_{N}h} \cdot \P_{N_3}(e^{-i\rho F}) \P_{N_1}u \cdot \cj{\P_{N_2}w}\, dxdt =: \sum_{N,N_1,N_2,N_3} \II_{N, N_1,N_2,N_3},
\end{align*}
In this case, we have $N\les N_1 \vee N_2 \vee N_3$. 
If $N\les N_1\vee N_2$, then we may proceed as in the contribution from $\Pblo e^{i\rho F}$ as we have control on the derivative $J_x^{s+\frac 12}$ in the norm. It remains to consider the case where $N\gg N_1\vee N_2$ and thus $N\sim N_3$. Then
\begin{align*}
\sum_{N,N_1,N_2,N_3} \II_{N, N_1,N_2,N_3} & \les \int_{0}^{T} \sum_{N\sim N_3} N^{s+\frac 12}\|\P_{N}h\|_{L^{2}_{x}} \| \P_{N_3} e^{-i\rho F}\|_{L^{\infty}_{x}}  \|\P_{\ll N}u\|_{L^{4}_{x}} \|\P_{\ll N}w\|_{L^{4}_{x}}  dt\\
& \les \|u\|_{L^{4}_{T,x}} \|w\|_{L^{4}_{T,x}} \bigg\|\sum_{N_3} \|\P_{N}h\|_{L^{2}_{x}} \|J^s_x \dx \P_{N_3} e^{-i\rho F}\|_{L^{2}_{x}} \bigg\|_{L^2_{T}} \\
& \les T^{\frac 18-2\dl}\|u\|_{X^{0,\frac 12}_{T}} \|w\|_{Z^{0}_{T}} \|h\|_{L^{2}_{T,x}} \|J^s_x \dx e^{-i\rho F}\|_{L^{\infty}_{T}L^{2}_{x}}.
\end{align*}
It remains to estimate the last term here, for which we use \eqref{PNe1} which implies
\begin{align}
\| J^{s}_x \dx e^{-i\rho F}\|_{L^{2}_x} & \les \|v\|_{L^{2}_x} + \bigg( \sum_{N\gg 1} N^{2s+2}\|\P_{N}e^{-i\rho F}\|_{L^2_x}^{2}\bigg)^{\frac 12} \notag \\
& \les  (1+\|v\|_{L^{\infty}_{T}L^{2}_x})\big[ \|v\|_{L^{\infty}_{T}L^{2}_x}^{3}  + (1+\|v\|_{L^{\infty}_{T}L^{2}_x}) \|v\|_{L^{\infty}_{T}H^{s}_x} \big].
 \label{uw12}
\end{align}
Similarly, using \eqref{PNe2}, we get
\begin{align*}
\| J^s_x \dx G\|_{L^{\infty}_{T}L^{2}_{x}} & \les (1+\V_{12})^{3}\|V\|_{L^{\infty}_{T}H^s_x}+\V_{12}(1+\V_{12})^{2}\{ \|v_1\|_{L^{\infty}_{T}H^{s}_x} +  \|v_2\|_{L^{\infty}_{T}H^{s}_x}\} \|G\|_{L^{\infty}_{T,x}}
\end{align*}
 which we use in place of \eqref{uw12} to estimate the contribution with $G$ in \eqref{uw2}. This completes the proof.
\end{proof}

Next we control the term $u \P_{+,\text{HI}}v$. Whilst $v$ belongs to the Fourier restriction norm spaces at a cost of regularity in space, the extra projection $\P_{+,\text{HI}}$ here turns out to be advantageous as it ensures that this product is highly non-resonant; see \eqref{IIphase}. Although it would be possible to replace $\P_{+,\text{HI}}v$ by \eqref{recovery2}, we choose to point out that this particular interaction can be controlled without using \eqref{recovery2}.

\begin{lemma} \label{LEM:SchrodingerII}
Let $s\geq 0$ and $0<\dl\leq \frac{1}{4}$. Then 
\begin{align}
\| u \P_{+,\textup{HI}} v\|_{X^{s+\frac 12,-\frac 12+2\dl}_{S;T}} &\les T^{\ta} \big\{ \|v\|_{L^{\infty}_{T}L^2_x} \|u\|_{X^{s+\frac 12,\frac 12+\dl}_{S;T}} \notag \\
&\hphantom{XXXXXX}+ ( \|v\|_{X^{s-\frac 38,\frac 38}_{\textup{BO};T}}+ \|v\|_{X^{s-1,1}_{\textup{BO};T}}) \| u\|_{X^{0,\frac 12+\dl}_{S;T}}  \big\} 
\label{uPHIv},\\
\| u \PbLO v\|_{X^{s+\frac 12,-\frac 12+2\dl}_{S;T}} &\les T^{\ta} \|u\|_{X^{s+\frac 12, \frac 12+\dl}_{S;T}} \|v\|_{L^{\infty}_{T}L^2_x}.  \label{uPLOv}
\end{align}
\end{lemma}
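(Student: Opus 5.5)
We argue by duality in $X^{s+\frac12,-\frac12+2\dl}_{S;T}$, pairing against $h\in X^{0,\frac12-2\dl}_{S;T}$ of unit norm. After taking extensions we dyadically decompose $u=\sum\P_{N_1}u$, $v=\sum\P_{N_2}v$, $h=\sum \P_N h$, and estimate
\[
\sum_{N,N_1,N_2}\Big|\int_{\R^2}\cj{J^{s+\frac12}_x\P_N h}\,\P_{N_1}u\,\P_{N_2}\P_{+,\textup{HI}}v\,dx\,dt\Big|.
\]

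\emph{Estimate \eqref{uPLOv}.} Here $N_2\les 1$, so $N\sim N_1$ (or both are $\les1$). The weight $N^{s+\frac12}$ is then absorbed by $u$. We bound $\|\PbLO v\|_{L^\infty_{T,x}}\les\|v\|_{L^\infty_TL^2_x}$ by Bernstein, and use Cauchy--Schwarz in $L^2_{T,x}$. The factor $T^\ta$ comes from $\|h\|_{L^2_{T,x}}\les T^{\frac12-2\dl}\|h\|_{X^{0,\frac12-2\dl}_{S;T}}$, or equivalently from the time-localisation of $X^{s,b}$.

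\emph{Estimate \eqref{uPHIv}.} There are two regimes.

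\textbf{(a) $N_1\ges N_2$.} Then $N\les N_1$, so $u$ carries the derivative. We estimate in $L^4$--$L^4$--$L^2$ or $L^2$--$L^\infty$ using Bernstein on $v$: $\|\P_{N_2}v\|_{L^\infty_x}\les N_2^{\frac12}\|v\|_{L^2}$. When $N_2\ll N_1$ this costs a half derivative, so instead we put $v$ in $L^4_{T,x}$ and $u$, $h$ in $L^4$ via \eqref{L4Strich}. Alternatively, we use the bilinear Strichartz estimate \eqref{bilin1} between $\P_{N_1}u$ and $\P_{N_2}v$ (where $v\in X_{BO}$ with dispersion $\xi|\xi|=\xi^2$ on positive frequencies, i.e. $S_+$) to gain $N_1^{-\frac12}$. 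The cleanest route is probably to observe that the phase argument below also covers this case, giving the term $\|v\|_{L^\infty_TL^2_x}\|u\|_{X^{s+\frac12,\frac12+\dl}}$.

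\textbf{(b) $N_2\gg N_1$ (the main case).} Here $N\sim N_2$, and we must gain $N_2^{\frac12+s}$ from $v$, which only lives in $X^{s-\ta,\ta}_{BO}$. The key is the resonance function. With $\xi=\xi_1+\xi_2$, $\xi_2>0$ and $|\xi_2|\gg|\xi_1|$,
\begin{equation}\label{IIphase}
(\tau+\xi^2)-(\tau_1+\xi_1^2)-(\tau_2-\xi_2^2)=\xi^2-\xi_1^2+\xi_2^2=2\xi_2(\xi_1+\xi_2)\sim N_2^2 .
\end{equation}
Hence the maximal modulation among $h,u,v$ is $\ges N_2^2$.

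If $v$ carries it, we use $\jb{\tau_2-\xi_2|\xi_2|}\ges N_2^2$. Putting $v\in X^{s-1,1}_{BO}$ gains $N_2^{2}\cdot N_2^{s-1}=N_2^{s+1}\ges N_2^{s+\frac12}$. We then place $u$ in $L^\infty_{t}L^2$ (or $L^4$) and $h$ in $L^4$, with room to spare.

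If $h$ carries it, then $\jb{\tau+\xi^2}^{\frac12-2\dl}\ges N_2^{1-4\dl}$. We put $v$ in $X^{s-\frac38,\frac38}_{BO}\subset L^4$-type norms via \eqref{L4Strich} for the BO flow (which coincides with $S_+$ on positive frequencies), at a cost of $N_2^{\frac38-s}$. Then $N_2^{s+\frac12-(1-4\dl)+\frac38-s}$ must be summable; this needs $\dl$ small. The bound $\dl\le\frac14$ suggests the exponents should be balanced precisely here, possibly using $\|u\|_{L^4}$ and bilinear Strichartz for $h\cdot u$ to gain an extra $N_2^{-\frac12}$.

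If $u$ carries it, then $\jb{\tau_1+\xi_1^2}^{\frac12+\dl}\ges N_2^{1+2\dl}$. We put $u$ into $L^2$ and $v$, $h$ in $L^4$.

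The \textbf{main obstacle} is the bookkeeping of exponents in case (b) when the high modulation sits on $h$ or $u$ rather than on $v$, and extracting a positive power of $T$ in every case. For the latter we would use $\|h\|_{X^{0,\frac12-2\dl}}$ versus $X^{0,\frac12-\dl}$ and the standard $X^{s,b}$ time-localisation gain from Lemma~\ref{LEM:linXsb}.
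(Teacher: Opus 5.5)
Your plan follows the paper's proof. Both use duality against $h\in X^{0,\frac12-2\dl}_{S;T}$, dyadic decomposition, and a trivial H\"older bound whenever $N\les N_1$. In the high–low case $N\sim N_2\gg N_1$, both use the non-resonance $|\xi^2-\xi_1^2+\xi_2^2|=2|\xi||\xi_2|\sim N^2$ with the same three modulation subcases and the same norms:
\begin{itemize}
\item when $h$ or $u$ carries the modulation, $\wt{\P}_N v$ goes in $L^4$ via $X^{s-\frac38,\frac38}_{\textup{BO}}$, giving $N^{-\frac18+4\dl}$ (or better);
\item when $v$ carries it, $X^{s-1,1}_{\textup{BO}}$ in $L^2$ gives $N^{-\frac12}$.
\end{itemize}
For \eqref{uPLOv}, your $L^2$--$L^2$--$L^\infty$ variant works just as well as the paper's $L^4$--$L^4$--$L^2$.

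Several side remarks in your case (a) and in the bookkeeping are wrong and should be replaced.

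\textbf{Case (a).} Three $L^4_{T,x}$ factors do not close by H\"older. The phase argument does not cover this case either: there the resonance is $2|\xi||\xi_2|\sim N_1N_2$, or even smaller when $N_1\sim N_2$, not $N_1^2$. Nothing clever is needed:
\begin{itemize}
\item put $\P_N h$ and $J^{s+\frac12}_x\P_{N_1}u$ in $\wt{L^4_{T,x}}$;
\item put $v$ in $L^2_{T,x}\le T^{\frac12}L^\infty_TL^2_x$;
\item when $N\sim N_1\gg N_2$, do not decompose $v$, since summing $\|\P_{N_2}v\|_{L^2}$ over $N_2\ll N_1$ costs a logarithm.
\end{itemize}
This is exactly how the paper obtains the term $\|v\|_{L^\infty_TL^2_x}\|u\|_{X^{s+\frac12,\frac12+\dl}_{S;T}}$.

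\textbf{The factor $T^{\ta}$.} It cannot come from upgrading $h$, since you only control $\|h\|_{X^{0,\frac12-2\dl}_{S;T}}$. It comes from $\|v\|_{L^2_{T,x}}$ and from the slack in $u$. The paper attaches the sharp cutoff $\ind_{[0,T]}$ to $u$ and uses only $\|\ind_{[0,T]}u\|_{X^{0,\frac12-2\dl}_S}$ in all three modulation subcases. This also settles where the time cutoff can go: not on $v$, since $\ind_{[0,T]}$ is not bounded on $X^{s-1,1}_{\textup{BO}}$.

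\textbf{The range of $\dl$.} Do not try to rebalance exponents to reach $\dl=\frac14$. Both your argument and the paper's need $4\dl<\frac18$, so only small $\dl$ is actually proved. At $\dl=\frac14$ the target space is $X^{s+\frac12,0}_S$, so the modulation gain on $h$ disappears altogether. Your suggested bilinear Strichartz on $h\cdot u$ is also unavailable, because $h$ lies only in $X^{0,\frac12-2\dl}$.
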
 
\begin{proof}
We begin with \eqref{uPHIv}. By duality, we reduce to estimating
\begin{align}
\I :=\int_{0}^{T} \int_{\R}  \cj{J^{s+\frac 12}_{x} h} \cdot  u \cdot \P_{+,\text{HI}}v \,dxdt  \label{uPHIvdual}
\end{align}
where $h\in X^{0,\frac{1}{2}-2\dl}_{S;T}$ with $\|h\|_{X^{0,\frac{1}{2}-2\dl}_{T}} \leq 1$.
We decompose $h=\Pblo h + \Pbhi h$ and consider the contributions to $\I$, which we write as $\I_{\text{lo}}$ and $\I_{\text{hi}}$, respectively. By H\"{o}lder and Bernstein inequality and \eqref{L4Strich}, we have
\begin{align}
 | \I_{\text{lo}}| \les \|J^{s+\frac 12}_{x}\Pblo h\|_{L^{4}_{T,x}} \| u\|_{L^{4}_{T,x}} \| v\|_{L^{2}_{T,x}} \les T^{\frac 12}  \|h\|_{X_{S}^{0,\frac 12-2\dl}} \|u\|_{L^{4}_{T,x}}\|v\|_{L^{\infty}_{T}L^2_x}.\label{uPHIvlo}
\end{align}
Now we consider the contribution from $\I_{\text{hi}}$, we decompose further as
\begin{align}
\begin{split}
\I_{\text{hi}} &= \sum_{N, N_1} \int_{0}^{T} \int_{\R}  \cj{J^{s+\frac 12}_{x} \P_{N} h} \cdot  \P_{N_1}u \cdot \P_{+,\text{HI}}v \,dxdt  \\
& = \sum_{N \sim N_1} \cdot  + \sum_{N \gg N_1} \cdot  +\sum_{N\ll N_1} \cdot  =: \I_{\text{hi}}^{(1)} +\I_{\text{hi}}^{(2)} + \I_{\text{hi}}^{(3)}.
\end{split} \label{uPHIvhi}
\end{align}
For $\I_{\text{hi}}^{(1)}$, by H\"{o}lder and Bernstein inequalities, we have 
\begin{align}
\begin{split}
|\I_{\text{hi}}^{(1)}| &\les \sum_{|j|\leq 5} \sum_{N \geq 2} \|\P_{N}h\|_{L^4_{T,x}} \|J_{x}^{s+\frac 12}\P_{2^{j}N}u\|_{L^{4}_{T,x}} \| v\|_{L^{2}_{T,x}}  \\
& \les \|h\|_{\wt{L^{4}_{T,x}}} \| J^{s+\frac 12}_{x} u\|_{\wt{L^{4}_{T,x}}} \|v\|_{L^{2}_{T,x}} \\
& \les  T^{\frac 18}\|h\|_{X^{0,\frac 12-2\dl}_{S;T}} \| u\|_{X^{s+\frac 12,\frac 12+\dl}_{S;T}} \|v\|_{L^{\infty}_{T}L^{2}_{x}}.
\end{split} \label{I1hi}
\end{align}
For $\I_{\text{hi}}^{(3)}$ we may place a widened projector $\wt{\P_{N_1}}$ onto $v$ for free.
Then
\begin{align*}
|\I_{\text{hi}}^{(3)}| &\les \sum_{N \ll N_1} (N N_{1}^{-1})^{s+\frac 12} \|\P_{N}h\|_{L^{4}_{T,x}} \|J^{s+\frac 12}\P_{N_1}u\|_{L^{4}_{T,x}} \| \wt{\P_{N_1}} v\|_{L^{2}_{T,x}} \\ 
& \les \| h\|_{\wt{L^{4}_{T,x}}} \sum_{N_1} \|J^{s+\frac 12}\P_{N_1}u\|_{L^{4}_{T,x}} \| \wt{\P_{N_1}} v\|_{L^{2}_{T,x}} \\
& \les  \|J^{s+\frac 12}\P_{N_1}u\|_{\wt{L^{4}_{T,x}}} \|v\|_{L^{2}_{T,x}} \\
&\les T^{\frac 18} \| u\|_{X^{s+\frac 12,\frac 12+\dl}_{S;T}} \|v\|_{L^{\infty}_{T}L^2_x}.
\end{align*}
For $\I_{\text{hi}}^{(2)}$ we need to make use of the phase function. We associate the sharp cutoff $\ind_{[0,T]}$ with the function $u$ and consider extensions $\wt{h}, \wt{u}, \wt{v}$ of $h, \ind_{[0,T]}u, v$ on $[0,T]$, respectively. We place $\wt{\P}_{N}$ onto $\wt{v}$ for free. Using the $\P_{+}$ projection on $v$, the phase function satisfies
\begin{align}
|\xi^2 - \xi_1^2 + \xi_2 |\xi_2|| =|\xi^2 - \xi_1^2 + \xi_{2}^{2}| = 2 |\xi| |\xi_2| \sim N^{2}. \label{IIphase}
\end{align} 
Then, we have 
\begin{align*}
|\I_{\text{hi}}^{(2)}| &\les \sum_{N \gg N_1} \bigg(  \bigg| \int_{\R^2} \cj{ \P_{N}J^{s+\frac 12}_{x}\Q_{\ges N^2} \wt{h}} \cdot  \P_{N_1}\wt{u}\cdot \wt{\P}_{N}v \,dx dt\bigg| \\
&\hphantom{X}+\bigg| \int_{\R^2} \cj{\P_{N}J^{s+\frac 12}_{x}\Q_{\ll N^2} \wt{h}} \cdot  \P_{N_1} \Q_{\ges N^2}\wt{u} \cdot\wt{\P}_{N}v\, dx dt\bigg|  \\
& \hphantom{X}+ \bigg| \int_{\R^2} \cj{\P_{N}J^{s+\frac 12}_{x}\Q_{\ll N^2} \wt{h}} \cdot  \P_{N_1}\Q_{\ll N^2} \wt{u} \cdot\wt{\P}_{N}\Q_{\ges N^2} v \,dx dt\bigg|  \bigg) =: \sum_{N \gg N_1} (A_1 +A_2 +A_3).
\end{align*}
By H\"{o}lder and Bernstein, and \eqref{L4Strich}, 
\begin{align*}
A_1 &\les N^{s+\frac 12} \| \Q_{\ges N^2} \P_{N}\wt{h}\|_{L^{2}_{t,x}} \|\P_{N_1}\wt{u}\|_{L^{4}_{t,x}}
 \|\wt{\P}_{N}\wt{v}\|_{L^{4}_{t,x}} \\
 &\les N^{s+\frac 12}N^{-1+4\dl} N^{-s+\frac{3}{8}} \|\P_{N}\wt{h}\|_{X_{S}^{0,\frac 12-2\dl}} \| \wt{u}\|_{X_{S}^{0,\frac 12-2\dl}} \| \wt{v}\|_{X_{BO}^{s- \frac 38, \frac 38}} \\
 & \les N^{-\frac 18 +4\dl}\|\wt{h}\|_{X_{S}^{0,\frac 12-2\dl}} \| \wt{u}\|_{X_{S}^{0,\frac 12-2\dl}} \| \wt{v}\|_{X_{BO}^{s- \frac 38, \frac 38}},
\end{align*}
which is a negative power of $N$ provided that $\dl<\frac{1}{24}$.
In a similar way, we have
\begin{align*}
A_2 &\les N^{s+\frac 12} \| \Q_{\ll N^2} \P_{N}\wt{h}\|_{L^{4}_{t,x}} \|\Q_{\ges N^2}\P_{N_1}\wt{u}\|_{L^{2}_{t,x}}
 \|\wt{\P}_{N}\wt{v}\|_{L^{4}_{t,x}} \\
 & \les N^{-\frac 18 +4\dl}\|\wt{h}\|_{X_{S}^{0,\frac 12-2\dl}} \| \wt{u}\|_{X_{S}^{0,\frac 12-2\dl}} \| \wt{v}\|_{X_{BO}^{s- \frac 38, \frac 38}},
\end{align*}
Finally, we have 
\begin{align*}
A_{3} & \les N^{s+\frac 12} \| \Q_{\ll N^2} \P_{N}\wt{h}\|_{L^{4}_{t,x}} \|\Q_{\ll N^2}\P_{N_1}\wt{u}\|_{L^{4}_{t,x}} \| \Q_{\ges N^2} \wt{\P}_{N}\wt{v}\|_{L^{2}_{t,x}} \\
&\les N^{s+\frac 12} N^{-2}N^{-s+1}\|\wt{h}\|_{X_{S}^{0,\frac 12-2\dl}} \| \wt{u}\|_{X_{S}^{0,\frac 12-2\dl}}  \| \wt{v}\|_{X_{BO}^{s-1,1}} \\
&\les N^{-\frac 12} \|\wt{h}\|_{X_{S}^{0,\frac 12-2\dl}} \| \wt{u}\|_{X_{S}^{0,\frac 12-2\dl}}  \| \wt{v}\|_{X_{BO}^{s-1,1}} ,
\end{align*}
which is acceptable. In these bounds, we can then gain a factor of $T^{\ta}$ due to the slight slack in modulation for the $u$ terms. This completes the proof of \eqref{uPHIv}.

 For \eqref{uPLOv}, we follow the same argument as for \eqref{uPHIv}. That is, we use duality as in \eqref{uPHIvdual} and split the dual function according to $\P_{> 2^{10}}$ and $\P_{\leq 2^{10}} $. The low frequency portion is handled exactly as in \eqref{uPHIvlo}, while for the high-frequency portion, upon dyadic decompositions as in \eqref{uPHIvhi}, we only have a non-zero contribution from the analogue of $\I^{(1)}_{\text{hi}}$ and we can bound as in \eqref{I1hi}. This completes the bound for \eqref{uPLOv}.
\end{proof}

The following estimates control the remaining terms on the left-hand side of \eqref{ueq}. Although relatively harmless, we rely on the exact $\frac 12$-smoothing property of $R(v,w)$ (Lemma~\ref{LEM:s12R}).

\begin{lemma} \label{LEM:SchrodingerIII}
Let $s\geq 0$, $\dl>0$ sufficiently small, and $0<T\leq 1$. Let $u,u_1,u_2,u_3 \in X^{s+\frac 12,\frac 12+\dl}_{S;T}$, for $j=1,2,$ let $w_j,v_j \in L^{\infty}_{T}L^2_x \cap L^{4}_{T,x}$, and let $F_j$ be two real-valued functions such that $\dx F_j =v_j$. Define $W:=w_1-w_2$, $V:=v_1-v_2$, $\V(f_1,f_2):= \|f_1\|_{L^{\infty}_{T}L^2_x} + \|f_2\|_{L^{\infty}_{T}L^2_x}$, $R_{\textup{diff}}:=R(v_1,w_1)-R(v_2,w_2),$
 and $G:=e^{i\rho F_1}-e^{i\rho F_2}$. 
Then, there exists $\ta>0$ such that
\begin{align}
 &\| u_1 \cj{u_2} u_3\|_{X^{s+\frac 12, -\frac 12+2\dl}_{S;T}} \les T^{\ta} \max_{\s \in S_3} \| u_{\s(1)}\|_{X^{s+\frac 12 ,\frac 12+\dl}_{S;T}} \prod_{j=2}^{3} \| u_{\s(j)}\|_{X^{0,\frac 12+\dl}_{S;T}}, \label{cubicXsb} \\
 &\big\|   u  \cj{R(v_1,w_1)}\big\|_{X^{s+\frac 12,-\frac 12+2\dl}_{S;T}}   \les   T^{\ta} \Big(  (\|w_1\|_{L^{4}_{T,x}}+\|v_1\|_{L^{4}_{T,x}}) \|u\|_{X^{s+\frac 12,\frac 38}_{S;T}}+ \|u\|_{X^{0,\frac 38}_{S;T}} \|J^{s+\frac 12}_x R\|_{L^{4}_{T,x}} \Big),  \label{uRterm1} \\
   &\big\|  u\cj{R_{\textup{diff}}}\big\|_{X^{s+\frac 12,-\frac 12+2\dl}_{S;T}} \les   T^{\ta}   (1+\V(v_1,v_2)+\V(w_1,w_2))^{4} [ \|V\|_{L^{\infty}_{T}L^2_x} + \|W\|_{L^{\infty}_{T}L^2_x}+\|G\|_{L^{\infty}_{T,x}}]  \notag \\
  & \hphantom{XXXXXXXXXXXXXXX}\times  \| u\|_{X^{s+\frac 12,\frac 38}_{S;T}}+T^{\ta}\|u\|_{X^{0,\frac 38}_{S;T}} \|J^{s+\frac 12}_x R\|_{L^{4}_{T,x}}.  \label{uRterm2}
\end{align}
\end{lemma}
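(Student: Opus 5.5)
All three bounds will be proved by duality. We pair against $J^{s+\frac 12}_x h$ with $\|h\|_{X^{0,\frac12-2\dl}_{S;T}}\le 1$, decompose every factor dyadically, and distribute the derivative $J^{s+\frac12}_x$ onto the factor carrying the largest frequency. This is the same scheme as in the proofs of Lemma~\ref{LEM:S1} and Lemma~\ref{LEM:SchrodingerII}.

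\textbf{The cubic bound \eqref{cubicXsb}.} This is the standard $X^{s,b}$ estimate for the cubic NLS in one dimension. Write the output frequency as $N$ and let $N_{\max}$ be the largest input frequency, so $N\les N_{\max}$. We place $J^{s+\frac12}_x$ on the factor $u_{\s(1)}$ at frequency $N_{\max}$ and put all four functions into $L^4_{T,x}$. The $h$ factor is controlled by \eqref{L4Strich} with $b=\frac12-2\dl$. The three $u_j$ are also controlled by \eqref{L4Strich}; since their modulation exponent is $\frac12+\dl>\frac38$, this leaves a factor $T^{\ta}$.

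The dyadic sums close in the $\wt{L^4_{T,x}}$ norms. When $N\sim N_{\max}$ we use Cauchy--Schwarz in $N$. When $N\ll N_{\max}$, two inputs are comparable to $N_{\max}$, and the factor $(N/N_{\max})^{s+\frac12}$ makes the sum convergent. Conjugation of $u_2$ plays no role, because only $L^4$ norms are used.

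\textbf{The bound \eqref{uRterm1}.} We split $R$ by frequency, writing $R=\PbLO R+\PbHI R$, and handle three cases.
\begin{itemize}
\item[(a)] Suppose $R$ carries a frequency $\les$ that of $u$, and $u$ is at high frequency. We estimate
\begin{align*}
\|J^{s+\frac12}_x u\|_{L^4_{T,x}}\,\|R\|_{L^4_{T,x}}\,\|h\|_{L^2_{T,x}}.
\end{align*}
Equivalently, $h$ goes into $L^4$ and $u$ into $L^4$ via \eqref{L4Strich} with exponent $\frac38<\frac12$, which again yields a power of $T$.
\item[(b)] In the same frequency configuration, $\|R\|_{L^4_{T,x}}$ is bounded using \eqref{R} term by term, exactly as in the proof of Lemma~\ref{LEM:RLq} but now in $L^4_{T,x}$. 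The terms linear in $w$ or $v$ give $\|w_1\|_{L^4_{T,x}}+\|v_1\|_{L^4_{T,x}}$; the exponential factors are placed in $L^\infty$ with Bernstein's inequality. Possibly small losses can be absorbed via H\"older in time.
\item[(c)] Suppose $R$ carries the largest frequency. Then we put $J^{s+\frac12}_x$ on $R$ and bound the contribution by
\begin{align*}
\|h\|_{\wt{L^4_{T,x}}}\,\|u\|_{L^4_{T,x}}\,\|J^{s+\frac12}_x R\|_{\wt{L^4_{T,x}}}.
\end{align*}
Here $\|u\|_{L^4_{T,x}}\les T^{\ta}\|u\|_{X^{0,\frac38}_{S;T}}$ by \eqref{L4Strich}. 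The quantity $\|J^{s+\frac12}_xR\|_{\wt{L^4_{T,x}}}$ is exactly the one controlled by Lemma~\ref{LEM:s12R}, which is why precisely $\frac12$-smoothing of $R$ is required.
\end{itemize}
In the high--high$\to$low interactions, the factor $(N/N_{\max})^{s+\frac12}$ again makes the dyadic sums summable.

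\textbf{The difference bound \eqref{uRterm2}.} The structure is identical; we replace $R$ by $R_{\textup{diff}}$. In the case where $u$ carries the derivative, we need an $L^4_{T,x}$ bound on $R_{\textup{diff}}$. For this we use \eqref{RLqdiff} pointwise in time and then take $L^4_T$, which is bounded by $L^\infty_T$ at the cost of $T^{\frac14}$; this produces the factor $(1+\V+\V)^{4}[\,\cdots]$. In the case where $R_{\textup{diff}}$ carries the derivative, we keep $\|J^{s+\frac12}_xR\|_{L^4_{T,x}}$ as written, which is to be estimated afterwards by \eqref{Rbddiff}.

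\textbf{Main obstacle.} The delicate point is getting a genuine positive power $T^{\ta}$ in every case while all modulations are at level $\frac38$ or $\frac12-2\dl$. In the first case of \eqref{uRterm1}, the $L^4$ control of $R$ costs no time factor. There I would first extract $T^{\ta}$ from $u$ through \eqref{L4Strich}, interpolating $X^{0,\frac38}$ with an $L^\infty_TL^2_x$-type bound if needed. A second point of care is that $R$ contains the non-$L^2$ factors $e^{i\rho F}$; these must always be placed in $L^\infty$, or handled through \eqref{PNe1}, never in $L^2$.
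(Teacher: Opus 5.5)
Your argument is in substance the dual, hand-made version of the paper's proof, and the cubic bound is the same duality/H\"older/\eqref{L4Strich} argument you describe. For the $u\cj{R}$ terms the paper first reduces to $\|F\|_{X^{s+\frac12,-\frac12+2\dl}_{S;T}}\les T^{\frac12-2\dl}\big(\|F\|_{L^2_{T,x}}+\|D^{s+\frac12}_x\PbHI F\|_{L^2_{T,x}}\big)$. It then applies the fractional Leibniz rule (Lemma~\ref{LEM:leib}) with $L^2=L^4\cdot L^4$. The factors $u$ and $J^{s+\frac12}_xu$ go into $L^4_{T,x}$ via \eqref{L4Strich}, and $R$ goes into $L^4_{T,x}$ via \eqref{R} (and \eqref{RLqdiff} for differences). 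Your execution, however, has two points that need fixing.

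\textbf{H\"older exponents in case (c).} In case (c), and in the ``Equivalently'' sentence of (a), you put the dual function $h$ into $\wt{L^4_{T,x}}$ alongside $u\in L^4_{T,x}$ and $J^{s+\frac12}_xR\in\wt{L^4_{T,x}}$. Three $L^4$ factors cannot be integrated over $x\in\R$ by H\"older, since the exponents sum to $\frac34$. In the $u\cj{R}$ terms $h$ must go into $L^2_{T,x}$, as in your first display for (a). This is exactly where a power of $T$ comes from, since $\|h\|_{L^2_{T,x}}\les T^{\frac12-2\dl-}\|h\|_{X^{0,\frac12-2\dl}_{S;T}}$. So the ``main obstacle'' you describe is not actually delicate.

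\textbf{Which norms the dyadic sums produce.} Summing dyadic blocks by H\"older and Cauchy--Schwarz yields $\ell^2_N L^4$ norms. Concretely, you get $\|J^{s+\frac12}_xR\|_{\wt{L^4_{T,x}}}$ and, in the high--high$\to$low interaction, $\|R\|_{\wt{L^4_{T,x}}}$ (respectively $\|R_{\textup{diff}}\|_{\wt{L^4_{T,x}}}$). The statement, by contrast, has plain $L^4_{T,x}$ norms. Moreover, \eqref{RLq} and \eqref{RLqdiff} only give fixed-time $L^4_x$ bounds, which do not control the $\wt{L^4}$ norms. Using square functions instead of block-by-block H\"older removes this, and that is what invoking Lemma~\ref{LEM:leib} after the reduction above amounts to; it is what the paper does.

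\textbf{Exponent in the difference bound.} \eqref{RLqdiff} carries a power $8$, not $4$, so your claimed factor $(1+\V(v_1,v_2)+\V(w_1,w_2))^4$ does not follow from it as stated. The paper's proof has the same mismatch, and it is harmless in the application, where all these quantities are small.
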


\begin{proof}
The simpler bound \eqref{cubicXsb} is standard and follows by a duality argument, H\"{o}lder's inequality and \eqref{L4Strich}. We move onto \eqref{uRterm1}. Regarding
 \eqref{uRterm1},  we have
\begin{align}
 \| u \cj{R} \|_{X^{s+\frac 12,-\frac 12+2\dl}_{S;T}}   \les T^{\frac 12 -2\dl} \big\{ \|  u \cj{R}\|_{L^{2}_{T,x}} + \|D^{s+\frac 12}_x \PbHI [   u \cj{R} ]\|_{L^{2}_{T,x}}  \big\}. \label{uRt1}
\end{align}
By Cauchy-Schwarz, \eqref{L4Strich}, and \eqref{R}, we have  
\begin{align*}
\|  u R\|_{L^{2}_{T,x}} \les  \|u\|_{L^{4}_{T,x}} \|R\|_{L^{4}_{T,x}} \les ( \|w_1\|_{L^{4}_{T,x}} +\|v_1\|_{L^{4}_{T,x}}) \|u\|_{X^{0,\frac 38}_{S;T}}.
\end{align*}
For the second term on the right-hand side of \eqref{uRt1}, Lemma~\ref{LEM:leib} and \eqref{L4Strich} give
\begin{align*}
\|D^{s+\frac 12}_x \PbHI [   u \cj{R} ]\|_{L^{2}_{T,x}} 
& \les\|J^{s+\frac 12}_x u\|_{L^{4}_{T,x}} \|R\|_{L^{4}_{T,x}} + \|u\|_{L^{4}_{T,x}} \|J^{s+\frac 12}_x R\|_{L^{4}_{T,x}} \\
& \les (\|w_1\|_{L^{4}_{T,x}}+\|v_1\|_{L^{4}_{T,x}}) \|u\|_{X^{s+\frac 12,\frac 38}_{S;T}}+ \|u\|_{X^{0,\frac 38}_{S;T}} \|J^{s+\frac 12}_x R\|_{L^{4}_{T,x}} 
\end{align*}
The estimate for \eqref{uRterm2} follows similarly, additionally using \eqref{RLqdiff}. We omit the details.
\end{proof}

\section{Well-posedness}
\label{SEC:LWP}

\subsection{Local well-posedness}

In this section, we prove Theorem~\ref{THM:LWP}. As the general argument here is quite standard, we will be brief. 
For more details, we refer to \cite{MP, CLOP} where similar arguments were made for the BO equation. 
We recall the following result from \cite{LMP}.

\begin{proposition}\label{PROP:LWPH2}
Let $s>\frac 54$. Then, for any $R>0$, there is $T=T(R)>0$ such that for every $(u_0,v_0)\in H^{s+\frac 12}(\R)\times H^{s}(\R)$, with $\|(u_0,v_0)\|_{H^{s+\frac 12}\times H^s}\leq R$, there exists a unique solution $(u,v)\in C([0,T];H^{s+\frac 12}(\R)\times C([0,T];H^{s}(\R)$ of \eqref{SBO} with $(u,v)\vert_{t=0}=(u_0,v_0)$. Moreover, the map $(u_0,v_0)\mapsto (u(t),v(t))$ is continuous from $H^{s+\frac 12}(\R)\times H^{s}(\R)$ to $C([0,T];H^{s+\frac 12})\times C([0,T];H^s)$. 
\end{proposition}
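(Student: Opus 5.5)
Proposition~\ref{PROP:LWPH2} is quoted from \cite{LMP}, so the task is to sketch how one would reprove it at high regularity $s>\frac54$, where no gauge transform is needed. I would use a classical energy method: parabolic regularization, uniform a priori bounds, compactness, then a Bona--Smith argument for uniqueness and continuity. First I regularize \eqref{SBO} by adding $-\eps\dx^4 u$ and $-\eps\dx^4 v$ (or by frequency truncation). This gives smooth solutions $(u^\eps,v^\eps)$ on a short time interval, which may a priori depend on $\eps$.

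\textbf{A priori estimate.} For $v$, I apply $J^s$ and pair with $J^s v$. The quasilinear term $\rho\dx(v^2)$ is handled by the Kato--Ponce commutator estimate, which gives a bound $\|\dx v\|_{L^\infty_x}\|v\|_{H^s}^2$. The coupling term $\dx(|u|^2)$ costs $\|u\|_{H^{s+1}}$, so it must be treated by a modified energy. For the Schr\"odinger part, the term $J^{s+\frac12}(uv)$ puts up to $s+\frac12$ derivatives on $v$, again a loss. To cure both losses I add cross terms to the energy, of the form
\[
\int J^{s}v\, J^{s-\frac12}\H(\ldots)\, |u|\text{-type products},
\qquad
\mathrm{Im}\int J^{s+\frac12}u\,\cj{J^{s-\frac12}u}\, \H v\text{-type corrections},
\]
chosen so that their time derivatives cancel the derivative-losing pieces. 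Here I use that the dispersion relations $\xi^2$ and $\xi|\xi|$ differ, so the high--low interactions are non-resonant and a normal-form-type correction is bounded. The resulting modified energy $E_s$ satisfies $E_s\sim\|u\|_{H^{s+\frac12}}^2+\|v\|_{H^s}^2$ for small data, or up to lower-order terms in general. It also satisfies
\[
\frac{d}{dt}E_s\lesssim \big(1+\|\dx v\|_{L^\infty}+\|u\|_{W^{1,\infty}}\big)^{k}E_s .
\]

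\textbf{Closing the estimate.} The remaining quantities $\|\dx v\|_{L^1_TL^\infty_x}$ and $\|u\|_{L^1_TW^{1,\infty}}$ must be controlled. At $s>\frac54$, Sobolev embedding alone is insufficient for $v$ in $H^s$ with $s<\frac32$. I would therefore use the refined Strichartz estimate of Koch--Tzvetkov \cite{KT}: localize to time intervals of length $N^{-1}$ at frequency $N$, which gives
\[
\|\dx v\|_{L^1_TL^\infty_x}\lesssim T^{\frac12}\|v\|_{L^\infty_TH^{s}}\big(1+\|v\|_{L^\infty_TH^s}\big)
\]
for $s>\frac54$. This is exactly where the threshold $\frac54$ comes from. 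A Gronwall/bootstrap argument then gives bounds uniform in $\eps$ on $[0,T(R)]$.

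\textbf{Compactness, uniqueness, continuity.} Compactness gives a solution in $L^\infty_T H^{s+\frac12}\times H^s$. For uniqueness, I run an $L^2\times H^{-\frac12}$-level (or $H^{\frac12}\times L^2$-level) energy estimate for the difference of two solutions, again with a modified energy, using $\|\dx v_j\|_{L^1L^\infty}$. Continuity in time with values in the top space and continuity of the flow follow from the Bona--Smith approximation.

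\textbf{Main obstacle.} I expect the hardest step to be the design of the modified energy that cancels the derivative loss in the coupling terms $\dx|u|^2$ and $uv$ at the mismatched regularities $s+\frac12$ versus $s$, while staying compatible with the Koch--Tzvetkov refined Strichartz bound.
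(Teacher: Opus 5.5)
The paper does not reprove this proposition. It quotes \cite{LMP} and describes the method as the energy method with Koch--Tzvetkov refined Strichartz estimates and modified energies, which is your framework. The step that fails as you state it is the construction of the modified energy.

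\textbf{Why non-resonance fails.} You justify the cross terms by non-resonance, but \eqref{SBO} is the \emph{resonant} case. For $\xi<0$ one has $\xi|\xi|=-\xi^2$, so $\P_-v$ evolves exactly like the Schr\"odinger part. Consider the derivative-losing interaction $\int\cj{u_{\mathrm{hi}}}\,u_{\mathrm{lo}}\,v_{\mathrm{hi}}\,dx$ with frequencies $\xi_1=\xi_2+\xi_3$ and $|\xi_2|\ll|\xi_3|\sim N$. Its phase is $\xi_1^2-\xi_2^2+\xi_3|\xi_3|$. This equals $2\xi_1\xi_3\sim N^2$ when $\xi_3>0$, but only $2\xi_2\xi_3$ when $\xi_3<0$; this is the ``minimum times maximum frequency'' phase the paper itself flags for $u\P_-v$. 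Each loss term on its own has a symbol of size $N^{2s+1}$ there. Dividing by the phase therefore gives a correction with symbol $\sim N^{2s}/\xi_2$, which is not bounded by $\|u\|_{H^{s+\frac12}}\|v\|_{H^s}$. The half-derivative loss is already present when $|\xi_2|\les 1$, so no frequency cutoff helps. Hence no bounded trilinear correction removes either loss, the one from $\dx(|u|^2)$ or the one from $uv$, separately.

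\textbf{What works instead.} Use a single joint correction exploiting the Hamiltonian structure, which is the same reason $\int v|u|^2$ appears in $H$. Take
\[
E_s=\|D^{s-\frac12}\dx u\|_{L^2}^2+\tfrac12\|D^{s}v\|_{L^2}^2+\int_{\R}D^{s-\frac12}v\,D^{s-\frac12}(|u|^2)\,dx .
\]
Use $\dt|u|^2=-2\dx\,\mathrm{Im}(\cj u\dx u)$ (the nonlinear terms of the $u$-equation contribute nothing) and $\H\dx^2=D\dx$. Then the contribution of the linear flows to the time derivative of the cross term consists of two pieces:
\begin{enumerate}[(i)]
\item $-\int D^{s}v\,D^{s}\dx(|u|^2)\,dx$, which cancels the BO loss exactly;
\item $2\int D^{s-\frac12}\dx v\,D^{s-\frac12}\mathrm{Im}(\cj u\dx u)\,dx$, which cancels the Schr\"odinger loss $2\,\mathrm{Im}\int\cj{D^{s-\frac12}\dx u}\,u\,D^{s-\frac12}\dx v\,dx$ up to a Kato--Ponce commutator.
\end{enumerate}
Only the \emph{sum} of the two losses is divisible by the phase, and the relative weight $\frac12$ is forced. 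The cross term is of lower order, so $E_s$ is coercive up to lower-order norms.

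The remainders then involve only $\|\dx v\|_{L^\infty}$ and quantities controlled by Sobolev embedding (since $s>1$). So your Koch--Tzvetkov/Gronwall closing step goes through, provided the refined Strichartz bound also carries the forcing $\dx(|u|^2)$. The same joint correction is needed in your difference estimate for uniqueness and for the Bona--Smith argument.
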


Our goal is then to extend the local well-posedness result in Proposition~\ref{PROP:LWPH2} to any $s\geq 0$. Crossing this gap makes some of our estimates sensitive to the working value of $s$; see for example \eqref{vL4Hs}. Thus, we need to proceed in a two-step process to cover the full range $s\geq 0$:
\begin{enumerate}[(a)]
\item we first extend the result of Proposition~\ref{PROP:LWPH2} to $H^{s}(\T)$ for any $s>\frac 34$
\item using part (a), we then cover the remaining range $0 \leq s \leq \frac 34$.
\end{enumerate}
As the argument is the same for both (a) and (b), and the estimates given in the previous sections hold for any $0\leq s<\frac 32$, we only provide details for (b), \textit{assuming} that (a) has already been proved. Thus, in the following, we fix $0\leq s\leq \frac 34$ and prove (b).

We claim that it is sufficient to consider the case of small initial data in $H^{\frac 12}\times L^2$. Indeed, if $(u,v)$ solves \eqref{SBO} on $[0,T]$, then the rescaled functions $(u_{\ld},v_{\ld})$ defined by
\begin{align}
u_{\ld}(t,x) : = \ld u( \ld^2 t, \ld x) \quad\text{and} \quad v_{\ld}(t,x) : = \ld v( \ld^2 t, \ld x) \label{scaling}
\end{align}
for $0<\ld\leq 1$, which exist on the longer time interval $[0,\ld^{-2}T]$, solve the $\ld$-SBO system:
\begin{equation}
\left\{
\begin{aligned}
  & i\dt u_{\ld}+\dx^2 u_{\ld} = \ld  u_{\ld}v_{\ld}+\be |u_{\ld}|^2 u_{\ld},\\
  & \dt v_{\ld}-\H \dx^2 v_{\ld}= \dx(|u_{\ld}|^2-\rho v_{\ld}^2),\\
   & (u_{\ld},v_{\ld})|_{t=0}=(u_{0,\ld},v_{0,\ld}),
\end{aligned}
 \right. \label{ldSBO}
\end{equation}
where $(u_{0,\ld},v_{0,\ld}): = (\ld u_0 (\ld \cdot) , \ld v_0 (\ld \cdot))$.
In particular, while \eqref{SBO} does not enjoy a perfect scaling property, the term $uv$ is ``sub-critical", with respect to a natural scaling \eqref{scaling}, as compared to the term $|u|^2 u$ and this explains the small factor of $\ld$ out the front of $u_{\ld} v_{\ld}$ in \eqref{ldSBO}. Whilst crucial, it is clear that all of our estimates in the previous sections hold for the $\ld$-SBO system \eqref{ldSBO} \textit{uniformly} in $0< \ld \leq 1$.

Now, since $s\geq 0$, we have
\begin{align*}
\|u_{0,\ld}\|_{H^{s+\frac 12}} \les \ld^{\frac 12}(1+\ld^{s+\frac 12})\|u_0\|_{H^{s+\frac 12}} 
\quad \text{and} \quad \|v_{0,\ld}\|_{H^{s}} \les \ld^{\frac 12}(1+\ld^{s})\|v_0\|_{H^{s}} 
\end{align*}
and so if $\eps_0>0$ is given, we may choose $\ld \les \min( \eps_0 \|(u_0,v_0)\|_{H^{\frac 12}\times L^2}^{-1}, 1)^{2}$ so that
\begin{align}
\|( u_{0,\ld}, v_{0,\ld})\|_{H^{\frac 12}\times L^2} \leq \eps_0. \label{scaledata}
\end{align}

Conversely, if $(u_{\ld},v_{\ld})$ is a solution to \eqref{ldSBO} with initial data $(u_{0,\ld},v_{0,\ld})$ on $[0,T^{\ast}(\ld)]$ for some $T^{\ast}(\ld)>0$, then $(u,v)= (\ld^{-1} u_{\ld}(\frac{t}{\ld^2} ,\frac{x}{\ld}), \ld^{-1}v_{\ld}(\frac{t}{\ld^2}, \frac{x}{\ld}))$ solves \eqref{SBO} with initial data $(u_0,v_0) = (\ld^{-1} u_0 (\frac{x}{\ld}), \ld^{-1}v_0 (\frac{x}{\ld}))$ on the time interval $[0,\ld^{-2}T^{\ast}(\ld)]$. In particular, the result of Proposition~\ref{PROP:LWPH2} and part (a) also applies to the scaled system \eqref{ldSBO} and gives a solution $(u,v)$ in $C_{T_{\max}}H^{s+\frac 12}\times C_{T_{\max}}H^{s}$, for some $T_{\max}>0$, depending only on $\|(u_0,v_0)\|_{H^{s+\frac 12}\times H^s}$, for any $s>\frac 34$.

The first step is to show that the time of existence $T_{\max}$ for these high regularity solutions can be lower bounded by a non-zero quantity only depending on $\|(u_{0}, v_0)\|_{H^{\frac 12}\times L^2}$. Given this, we then obtain difference estimates that allow us to construct rough solutions as limits of regular solutions. 
 
 \smallskip
\noi
\textbf{Step 1: A priori estimates:}
Let $\eps_0>0$ be sufficiently small to be chosen later. 
Given $(u_0,v_0)\in H^{\infty}(\R)\times H^{\infty}(\R)$ satisfying \eqref{scaledata}, let $(u,v)\in C_{T_{\max}}H^{\infty}(\R)^2$ be the solution to \eqref{ldSBO}, and let $w=w(v)$ be the gauged version of $v$ as given in \eqref{gauge}. 
For $T>0$, we define 
\begin{align}
\begin{split}
N_{T}^{s}(u,v)  = \max\bigg( \|v\|_{L^{\infty}_{T}H^s},  &\| J^s_x v\|_{ \wt{L^4_{T,x}}  }, \|w\|_{Z^{s}_{T}},  \|w(0)\|_{H^s} ,  \\
& \|u(0)\|_{H^{s+\frac 12}}, \| \ind_{[0,T)} \NN_{S;\ld}(u,v)\|_{X^{s+\frac 12, -\frac 12+2\dl}_{S}} \bigg)
\end{split} \label{NsT}
\end{align}
where
\begin{align*}
\NN_{S;\ld}(u,v)&: = \ld \big\{\tfrac{1}{\rho} e^{-i\rho F} u \cj{\PbHI w}-\tfrac{i}{\rho}  u\cj{R(v,w)} -i u\PbLO  v -i u \P_{+,\textup{HI}} v \big\} -i \be |u|^2 u \\
& = \ld uv -i \be |u|^2 u
\end{align*}
denotes the nonlinearity for the Schr\"{o}dinger part of \eqref{ldSBO} after replacing $\cj{\PbHIp v}$ by \eqref{recovery2}. 
From the Duhamel formulation of the Schr\"{o}dinger part of \eqref{ldSBO} and the linear estimates (Lemma~\ref{LEM:linXsb}), we have 
\begin{align}
\|u\|_{X^{s+\frac 12, \frac 12+\dl}_{S;T}} &\les \|u(0)\|_{H^{s+\frac 12}} +T^{\dl} \| \ind_{[0,T)} \NN_{S;\ld}(u,v)\|_{X^{s+\frac 12, -\frac 12+2\dl}_{S}} \les N^{s}_{T}(u,v). \label{uNsbd}
\end{align}
Next, as $u,v,$ and $w$ are smooth,  it follows from Lemma~\ref{LEM:Zcts} and Remark~\ref{RMK:Ncts} that the map $T\mapsto N_{T}^{s}(u,v)$ is continuous. Moreover, this map is non-decreasing in $T$.

Note that by \eqref{vXsb}, we have 
\begin{align*}
\sup_{0\leq \ta \leq 1}\|v\|_{X^{s-\ta,\ta}_{\text{BO};T}} \les N^{s}_{T}(u,v)+ N^{s}_{T}(u,v)^2 \les N_{T}^{s}(u,v).
\end{align*}
By the Duhamel formula for $w$, Lemma~\ref{LEM:linXsb}, \eqref{vXsb}, \eqref{BO1}, \eqref{BO2}, \eqref{BO3}, and \eqref{eFg1}, we have 
\begin{align*}
\limsup_{T\to 0^{+}} \|w\|_{Z^{s}_{T}} \les \|w(0)\|_{H^{s}} +\|u(0)\|_{H^{s+\frac 12}}\les (1+\|v(0)\|_{L^2})^{4} \|v(0)\|_{H^{s}}+\|u(0)\|_{H^{s+\frac 12}}.
\end{align*}
Therefore, with Lemma~\ref{LEM:SchrodingerII} and Lemma~\ref{LEM:SchrodingerIII}, we have 
\begin{align}
\limsup_{T\to 0^{+}}N_{T}^{s}(u,v) \les (1+\|v(0)\|_{L^2})^{4} \|v(0)\|_{H^{s}}+ \|u(0)\|_{H^{s+\frac 12}}. \label{NT0}
\end{align}

\noi
By combining Lemma~\ref{LEM:linXsb}, Lemma~\ref{LEM:linZsb}, Lemma~\ref{LEM:CCMXsb}, \eqref{Rbd}, Lemma~\ref{LEM:BO1}, Lemma~\ref{LEM:BO2}, Lemma~\ref{LEM:S1}, Lemma~\ref{LEM:SchrodingerII}, Lemma~\ref{LEM:SchrodingerIII}, we obtain
\begin{align}
\begin{split}
N^{\s}_{T}(u,v) & \leq  \|u(0)\|_{H^{\s+\frac 12}}+C_1 (1+\|v(0)\|_{L^2})^{a} \|v(0)\|_{H^{\s}}  \\
&+C_{2}(1+N_{T}^{0}(u,v))^{5} N_{T}^{0}(u,v) N_{T}^{\s}(u,v) +C_2 T^{\frac{1}{2}}N_{T}^{0}(u,v)
, 
\end{split} \label{apriori1}
\end{align}
where $\s\in \{0,s, 1\}$, and
for some constants $a>0$ and $C_1, C_2>0$. \footnote{ Whilst the constants depend on the choices $\{0,s,1\}$, we only use these three regularities and can thus take the maximum of the given constants over $\{0,s,1\}$.}

We first put $\s=0$ in \eqref{apriori1}.
We then choose $T >0$ so that $C_2 T^{\frac 12} \leq \frac 13$, and obtain
\begin{align}
N_{T}^{0}(u,v) \leq \tfrac{3}{2}\|u(0)\|_{H^{\frac 12}} + \tfrac{3}{2}C_1( 1+\|v(0)\|_{L^2})^a \|v(0)\|_{L^2} +\tfrac{3}{2}C_2 (1+N_{T}^0 )^5 (N_{T}^{0})^{2}.
\end{align}
In view of \eqref{NT0} and \eqref{scaledata}, by choosing $\eps_0>0$ sufficiently small, we have
\begin{align}
N_{T}^{0}(u,v) \leq 2\|u(0)\|_{H^{\frac 12}} +2C_1( 1+\|v(0)\|_{L^2})^a \|v(0)\|_{L^2} 
\end{align}
Using this information in \eqref{apriori1} at regularity $\s=1$, we obtain
\begin{align}\label{apriori2}
\| (u,v)\|_{L^{\infty}_{T'}H^{\frac 32}\times L^{\infty}_{T'}H^{1}} \leq N_{T'}^{1}(u,v) \leq 2\|u(0)\|_{H^{\frac 32}}+ 2C_1 (1+\|v(0)\|_{L^2})^{a} \|v(0)\|_{H^{1}},
\end{align}
for any $0<T'\leq T$
which implies that the maximal time of existence for these solutions is bounded from below by $T_{\ast}=T_\ast( \| (u(0), v(0))   \|_{H^{\frac 12}\times L^2} )$.

 \smallskip
\noi
\textbf{Step 2: Difference estimate:}
As for the uniqueness and continuity of the flow map, we consider differences of $H^{\infty}(\R)$ solutions and derive a difference estimate. Given two such solutions $(u_1,v_1),(u_2,v_2)$ to \eqref{ldSBO} with initial data $(u_j(0), v_j(0))\in H^{\infty}(\R)$, $j=1,2$. We assume that $\max_{j=1,2}\|(u_j(0), v_j(0))\|_{H^{\frac 12}\times L^2} \leq \eps_0$,
and that $v_j(0)$ are equal on low frequencies:
\begin{align}
\PbLO v_1(0) = \PbLO v_2(0). \label{LOsame}
\end{align}
This property is crucial; see Lemma~\ref{LEM:Fdiffs} below. By Step 1, there exists $T_{\ast}>0$ such that
\begin{align*}
\max_{j=1,2}N_{T}^{s}(u_j, v_j) \les \eps_0  \quad \text{and} \quad \max_{j=1,2} \sup_{0\leq \ta\leq 1} \| v_j\|_{X^{s-\ta, \ta}_{\text{BO}:T}} \les  \eps_0
\end{align*}
for all $0<T\leq T_{\ast}$.
We consider the difference $U:=u_1-u_2$ and $V:=v_1-v_2$. 
We also define the primitives $F_{j}= F_{j}[u_j]$ as in \eqref{F} and the attendant gauged variables $v_{j}=v_{j}[u_j]$ as in \eqref{gauge} with corresponding difference $W:=w_1-w_2$.

We then obtain difference estimates for the norms appearing in \eqref{NsT} using additionally \eqref{eFg2}, 
Lemma~\ref{LEM:Xsbdiffs}, \eqref{Rbddiff}, \eqref{RLqdiff}, Lemma~\ref{LEM:BO2}, \eqref{uw2}, Lemma~\ref{LEM:SchrodingerIII}. In order to control the differences $e^{\pm i\rho F_1}-e^{\pm \rho F_2}$, we use the mean value theorem to get $\| e^{\pm i\rho F_1}-e^{\pm \rho F_2}\|_{L^{\infty}_{T,x}} \les \|F_1 -F_2\|_{L^{\infty}_{T,x}}.$
To estimate the difference $F_1-F_2$, we need to use the assumption \eqref{LOsame}. For this purpose, we rely on the following result, which will be proved at the end of this step.

\begin{lemma}\label{LEM:Fdiffs}
Let $(u_1,v_1), (u_2,v_2)$ be two $H^{\infty}(\R)^2$ solutions to \eqref{ldSBO} with initial data satisfying 
\begin{align}
\PbLO v_1(0) = \PbLO v_2 (0). \label{sameLO}
\end{align}
For $j=1,2$, let $F_j=F_j[u_j, v_j]$ denote the primitive defined in \eqref{F}. Then, it holds that 
\begin{align}
 \| (F_1 -F_2) \vert_{t=0}\|_{L^{\infty}_{x}}  & \les \|v_1 (0) -v_2 (0)\|_{L^2_x}, \label{Fdiff0}\\
 \|F_1 -F_2\|_{L^{\infty}_{T,x}} & \les  \|v_1 (0) -v_2 (0)\|_{L^2_x}+ T( \|u_1\|_{L^{\infty}_{T}L^2_x} +\|u_2\|_{L^{\infty}_{T}L^2_x} ) \|u_1-u_2\|_{L^{\infty}_{T}L^2_x} \notag \\
 & \hphantom{XXXXXXXXXX} + T( \|v_1\|_{L^{\infty}_{T}L^2_x} +\|v_2\|_{L^{\infty}_{T}L^2_x} ) \|v_1-v_2\|_{L^{\infty}_{T}L^2_x}.
  \label{Fdifft}
\end{align}
\end{lemma}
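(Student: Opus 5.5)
The plan is to work directly from the explicit formula \eqref{F} together with the evolution equation \eqref{Feqn} for the primitive.

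\textbf{Estimate at $t=0$.} Since $G(0)=0$, \eqref{F} gives
\begin{align*}
(F_1-F_2)(0,x)=\int_{\R}\psi(y)\int_y^x V_0(z)\,dz\,dy, \qquad V_0:=v_1(0)-v_2(0).
\end{align*}
By \eqref{sameLO}, $\PbLO V_0=0$, so $\widehat{V_0}$ vanishes on $\{|\xi|\le 4\}$. For each fixed $y$, write $\int_y^x V_0\,dz = H(x)-H(y)$, where $H:=\dx^{-1}V_0$ is defined on the Fourier side by $\widehat{H}(\xi)=\widehat{V_0}(\xi)/(i\xi)$. This is legitimate because the Fourier support of $V_0$ avoids the origin. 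Then $\widehat H\in L^1$ with
\begin{align*}
\|\widehat H\|_{L^1}\le \big\|\ind_{|\xi|>4}\,\xi^{-1}\big\|_{L^2}\|V_0\|_{L^2}\les \|V_0\|_{L^2},
\end{align*}
so $\|H\|_{L^\infty}\les \|V_0\|_{L^2}$. Integrating against $\psi$, which has $\int\psi=1$ and compact support, gives $\|(F_1-F_2)(0)\|_{L^\infty}\le 2\|H\|_{L^\infty}$. This proves \eqref{Fdiff0}.

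\textbf{Estimate for $t>0$.} Integrating \eqref{Feqn} in time for each $F_j$ and subtracting gives
\begin{align*}
(F_1-F_2)(t)=(F_1-F_2)(0)+\int_0^t\Big[\H\dx V-\rho(v_1^2-v_2^2)+|u_1|^2-|u_2|^2\Big](t')\,dt'.
\end{align*}
The last two terms are harmless in $L^2_x$: $|v_1^2-v_2^2|\le |V|(|v_1|+|v_2|)$, and similarly for $u$. They are bounded in $L^1_x$ by $\|V\|_{L^2}(\|v_1\|_{L^2}+\|v_2\|_{L^2})$, but not in $L^\infty_x$, so this direct approach fails and I will instead estimate $F_1-F_2$ through its spatial derivative.

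\textbf{Frequency splitting.} I split $F_1-F_2 = \PbLO(F_1-F_2)+\PbHI(F_1-F_2)$ in the sense of \eqref{PbloE}, using that $\dx(F_1-F_2)=V$.

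For the high part, Bernstein gives $\|\PbHI(F_1-F_2)\|_{L^\infty}\les\sum_{N\ge8}N^{-1/2}\|\P_N V\|_{L^2}\les\|V\|_{L^2}$. The bound must be expressed through the data, so I apply Duhamel to $\PbHI V$.

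The low part is the delicate point and the main obstacle; the factor $T$ in \eqref{Fdifft} is meant to come from here. I would first try the following. Because $\PbLO V(0)=0$, the linear part of $\PbLO(F_1-F_2)$, namely $e^{-t\H\dx^2}$ applied to $\PbLO(F_1-F_2)(0)$, is a constant of the $\psi$-normalization that is already controlled at $t=0$. The Duhamel term for $\PbLO(F_1-F_2)$ involves $\PbLO$ of $|u_1|^2-|u_2|^2-\rho(v_1^2-v_2^2)$ together with $G_1'-G_2'$. For these, $\|\PbLO f\|_{L^\infty}\les\|f\|_{L^1}$ by Bernstein, and $|G_1'-G_2'|$ is controlled by the same $L^1$ quantities plus $\|\H\psi'\|_{L^2}\|V\|_{L^2}$. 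This yields the $T(\cdots)$ terms.

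The remaining issue is the $\|V\|_{L^\infty_TL^2}$ contribution from the high part. It should be absorbed or re-expressed using $v_1(0)-v_2(0)$ via the energy-type bound \eqref{LinftyHsdiff} at $s=0$. Failing that, I would apply the derivative form $\dx F=v$ only at frequencies $\ge 8$, where $\dx^{-1}$ is bounded, and keep the claimed right-hand side.
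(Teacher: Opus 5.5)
Your estimate at $t=0$ is correct and more direct than the paper's. With $H=\dx^{-1}V_0$ defined on the Fourier side (legitimate, since \eqref{sameLO} forces $\ft{V_0}=0$ on $[-4,4]$), you get $(F_1-F_2)(0)=H-\int\psi H$ and $\|\ft H\|_{L^1}\les\|V_0\|_{L^2}$. The paper instead proves the two-variable identity \eqref{Fdiff2} to show that $\PbLO(F_1-F_2)(0)$ is the constant \eqref{Fdiff4}, and handles $\PbHI(F_1-F_2)(0)$ by Bernstein. Your low-frequency argument for $t>0$ is the paper's: the free part is that constant, and the Duhamel part is bounded by Bernstein $L^1_x\to L^\infty_x$ at frequencies $\les 1$. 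One correction there: no $G_1'-G_2'$ term should appear. $G$ was chosen so that \eqref{Feqn} holds exactly, and keeping such a term would produce an unweighted $T\|V\|_{L^\infty_TL^2_x}$, which is not of the form in \eqref{Fdifft}.

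The genuine gap is the high-frequency part for $t>0$, which you leave open. Bernstein only gives $\|\PbHI(F_1-F_2)\|_{L^\infty_{T,x}}\les\|V\|_{L^\infty_TL^2_x}$, and neither of your remedies turns this into the right-hand side of \eqref{Fdifft}:
\begin{itemize}
\item \eqref{LinftyHsdiff} contains $\|G\|_{L^\infty_{T,x}}$, the quantity this lemma is meant to control, as well as $\|W\|_{Z^0_T}$. So that route is circular and gives a bound of the wrong form.
\item Boundedness of $\dx^{-1}$ on frequencies $\ge 8$ is just Bernstein again, at time $t$.
\end{itemize}
The paper's proof stops at exactly the same place. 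It bounds the high part by $\|v_1-v_2\|_{L^\infty_TL^2_x}$ and declares \eqref{Fdifft} proved. What it really establishes is \eqref{Fdifft} with an additional $\|V\|_{L^\infty_TL^2_x}$ on the right. That is all Step 2 needs, since $\|G\|_{L^\infty_{T,x}}$ only enters the difference estimates multiplied by $O(\eps_0)$ factors.

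If you want a bound in terms of $V(0)$, Duhamel is the right idea, but it must be run for the primitive $\PbHI(F_1-F_2)=\dx^{-1}\PbHI V$. Its forcing is $\PbHI g$ with $g=|u_1|^2-|u_2|^2-\rho(v_1^2-v_2^2)\in L^1_x$. You then need dispersion rather than Bernstein, because $\PbHI$ does not map $L^1_x$ into $L^\infty_x$; this is the very obstruction you noticed earlier. Concretely:
\begin{itemize}
\item bound the free part by $\|\ft H\|_{L^1}\les\|V_0\|_{L^2}$;
\item bound the Duhamel part with $\|e^{t\H\dx^2}\PbHI f\|_{L^\infty_x}\les|t|^{-\frac12}\|f\|_{L^1_x}$, which follows from van der Corput applied separately on $\xi>0$ and $\xi<0$.
\end{itemize}
This gives \eqref{Fdifft} with $T^{\frac12}$ in place of $T$, which is just as good for the application. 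The factor $T$ itself is not reachable this way: for small $T$, the Duhamel operator has size $\sim T^{\frac12}$ from $L^\infty_TL^1_x$ to $L^\infty_{T,x}$.
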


At the end of this procedure, by reducing $T_{\ast}=T_\ast(\eps_0 )$ if necessary, we obtain 
\begin{align}
\| J^{\s} V\|_{L^{\infty}_{T}L^2_x \cap \wt{L^{4}_{T,x}}}  + \|W\|_{Z^{s}_{T}} + \|U\|_{X^{\s+\frac 12,\frac12+\dl}_{T}}  \leq C_3 \| (U(0), V(0))\|_{H^{\s+\frac 12}\times H^{\s}}, \label{LipbdT}
\end{align}
for any $0<T\leq T_{0}$ and $\s\in \{0,s\}$.

\begin{proof}[Proof of Lemma~\ref{LEM:Fdiffs}]
We begin by showing \eqref{Fdiff0}. In the following, we apply the single-variable operators $\Pblo$, $\PbLO$, $\Pbhi$, and $\PbHI$, as defined in \eqref{projs} and \eqref{PbloE}, to functions of two-variables. We keep track of which variable these operators act (keeping the remaining variable fixed) using a superscript notation: $\PbLO^{x}( h(x,y))$ and $\PbHI^{y}( h(x,y))$, etc.
The argument is similar to that in \cite[Section 4B]{MP}, tracking the variables these operators act on. 

We first derive a useful identity. From \eqref{F}, we have $F_1 (x) - F_1(y) = \int_{y}^{x} v_1(t,z)dz $. Then
\begin{align}
\PbLO^{x}\bigg( \int_{y}^{x} v_1(t,z)dz \bigg) = \PbLO^{x} F_1(x) - F_1(y), \label{Fdiff1}
\end{align}
where we used that $\PbHI^{x}F_1(y)=0$. It then follows from the commutativity of $\dx$ and $\PbLO$ that $\dx \PbLO F_1=\PbLO v_1$. Integrating both sides of this then gives 
\begin{align*}
\int_{y}^{x}\PbLO v_1 (t,z) dz = (\PbLO F_1)(x) - (\PbLO F_1)(y).
\end{align*}
 Combining this with \eqref{Fdiff1} then implies 
 \begin{align*}
\int_{y}^{x} \PbLO v_1(t,z) dz- \PbLO^{x} \bigg( \int_{y}^{x} v_1(t,z)dz \bigg) = (\PbHI F_1)(y).
\end{align*}
 Finally, we then apply $\Pblo^{y}$ to both sides to find the desired result:
 \begin{align}
\Pblo^{y} \bigg( \int_{y}^{x} \PbLO v_1(t,z)dz \bigg) = \Pblo^{y} \PbLO^{x} \bigg( \int_{y}^{x} v_1 (t,z)dz \bigg). \label{Fdiff2}
\end{align}
Clearly \eqref{Fdiff2} applies to any $H^{\infty}(\R)\times H^{\infty}(\R)$ solution $(u,v)$ to \eqref{ldSBO}.
Now we establish \eqref{Fdiff0}. Using that $G_j (0)=0$, the operators $\Pblo^{y}$ and $\PbLO^{x}$ commute, \eqref{Fdiff2}, \eqref{sameLO}, and that $\PbLO^{x}( h(y))= h(y) -\PbHI^{x}(h(y))= h(y)$, we have 
\begin{align}
\PbLO^{x}(F_1-F_2)(0,x) & = \int_{\R}\psi(y) \PbLO^{x} \bigg( \int_{y}^{x}(v_1-v_2)(0,z)dz \bigg) dy \notag\\
& = \int_{\R}\psi(y)\Pblo^{y} \PbLO^{x} \bigg( \int_{y}^{x}(v_1-v_2)(0,z)dz \bigg) dy \notag\\
& \quad + \int_{\R}\psi(y)\Pbhi^{y} \PbLO^{x}  \bigg( \int_{y}^{x}(v_1-v_2)(0,z)dz \bigg) dy\notag \\
& = \int_{\R}\psi(y)  \Pblo^{y} \bigg( \int_{y}^{x} \PbLO(v_1-v_2)(0,z)dz \bigg) dy \notag\\
&  \quad +  \int_{\R} (\Pbhi\psi)(y) \PbLO^{x} \dy^{-1} \dy\Pbhi^{y} \bigg( \int_{y}^{x}(v_1-v_2)(0,z)dz \bigg) dy \notag\\
& =  \int_{\R} (\dy^{-1} \Pbhi\psi)(y) \PbLO^{x} \Pbhi^{y}[ v_1-v_2](0,y) dy \notag \\
& =  \int_{\R} (\dy^{-1} \Pbhi\psi)(y) \Pbhi^{y}[ v_1-v_2](0,y) dy. \label{Fdiff4}
\end{align}
Then, by Cauchy-Schwarz we obtain
\begin{align}
\| \PbLO(F_1-F_2)\vert_{t=0}\|_{L^{\infty}_x} \les \| v_1(0)-v_2(0)\|_{L^{2}_x}. 
\label{Fdiff5}
\end{align}
To see \eqref{Fdiff0}, we note that $\PbHI(F_1-F_2)= \dx^{-1} \PbHI[ v_1-v_2]$ so Bernstein's inequality implies 
\begin{align}
\| \PbHI(F_1-F_2)\vert_{t=0}\|_{L^{\infty}_x} \les \| v_1(0)-v_2(0)\|_{L^{2}_x}. \label{Fdiff3}
\end{align}
This completes the proof of \eqref{Fdiff0}. 

For \eqref{Fdifft}, we split $F_1-F_2= \PbLO (F_1-F_2)+\PbHI(F_1-F_2)$. As in \eqref{Fdiff3}, we have $\|\PbHI(F_1-F_2)\|_{L^{\infty}_{T,x}}\les \|v_1-v_2\|_{L^{\infty}_{T}L^2_x}$. For the low part $\PbLO (F_1-F_2)$, we use the Duhamel formulation of \eqref{Feqn}:
\begin{align*}
\PbLO (F_1-F_2)(t) = e^{it \H\dx^2}\PbLO[F_1-F_2]\vert_{t=0} + \int_{0}^{t}  e^{i(t-t') \H\dx^2} \PbLO[ |u_1|^2 -|u_2|^2 +\rho (v_2^2 -v_1^2)]dt'.
\end{align*}
For the linear solution part, we use \eqref{Fdiff4} and \eqref{Fdiff5} to obtain
\begin{align*}
\|e^{it \H\dx^2}\PbLO[F_1-F_2]\vert_{t=0}\|_{L^{\infty}_{T,x}} = \|\PbLO[F_1-F_2]\vert_{t=0}\|_{L^{\infty}_{x}}  \les \|v_1(0) -v_2(0)\|_{L^{2}_x}.
\end{align*}
For the Duhamel integral piece, we note that since $u_j,v_j\in L^2$, we can interpret $\PbLO$ as a smooth projection to frequencies $\{ |\xi|\les 1\}$, and thus by the Bernstein inequality, this term in $L^{\infty}_{T,x}$ is bounded by
\begin{align*}
T( \|u_1\|_{L^{\infty}_{T}L^2_x} +\|u_2\|_{L^{\infty}_{T}L^2_x}) \|u_1-u_2\|_{L^{\infty}_{T}L^2_x} + T( \|v_1\|_{L^{\infty}_{T}L^2_x} +\|v_2\|_{L^{\infty}_{T}L^2_x}) \|v_1-v_2\|_{L^{\infty}_{T}L^2_x}.
\end{align*}
This completes the proof of \eqref{Fdifft}. 
\end{proof}

 \smallskip
\noi
\textbf{Step 3: Existence, uniqueness, and continuity property:}
For the existence of solutions in $H^{s+\frac 12}(\R)\times H^{s}(\R)$, we fix $(u_0,v_0)\in H^{s+\frac 12}(\R)\times H^{s}(\R)$ such that \eqref{scaledata} is satisfied and consider the sequence of approximations 
\begin{align*}
u_{0,j} = \mathcal{F}^{-1}\{ \ind_{[-j,j]} \ft u_0\} \qquad \text{and} \qquad v_{0,j} = \mathcal{F}^{-1}\{ \ind_{[-j,j]} \ft v_0\}
\end{align*}
 with corresponding $H^{\infty}(\R)\times H^{\infty}(\R)$ solutions $\{(u_j, v_j)\}_{j\in \N}$. By the previous results, these all belong to $C([0,T_{\ast}];H^{\infty}(\R))$, where $T_\ast = T_\ast(\|u_0\|_{H^{s_0}})>0$.  For all $j\in \N$ sufficiently large, the sequence $\{v_{0,j}\}_{j}$ satisfies \eqref{LOsame} by virtue of equalling $v_0$.
 Thus, by \eqref{LipbdT}, we see that the sequence $\{(u_j,v_j)\}_{j\in \N}$ is then Cauchy in the norm appearing in $N_{T}^{s}(u,v)$ and hence converges to a limit $(u,v)$ there, which satisfies \eqref{ldSBO} in the distributional sense and has $(u,v)\vert_{t=0}=(u_0,v_0)$. The continuous dependence with respect to initial data then follows by a standard argument. See for instance \cite[p. 387]{MP}.
 This completes the proof of the local well-posedness in Theorem~\ref{THM:LWP}.

\subsection{Global well-posedness}

In this section, we prove Theorem~\ref{THM:GWP} which relies on the following a priori bound coming from the conservation laws.

\begin{proposition}[A priori bound]
Let $\rho,\be\in \R$. There exists $r>0$ such that for every $(u,v)$ a global smooth solution to \eqref{SBO} with parameters $(\be,\rho)$ and initial data $(u_0,v_0)$ satisfying 
\begin{align}
\| u_0\|_{L^2}\leq r, 
\label{EnergL2}
\end{align}
 there exists a $C:\R_{+}^{2} \to \R_{+}$  increasing in its arguments such that
\begin{align*}
\| u(t)\|_{H^{1}(\R)}^{2} +\|v(t)\|_{H^{\frac 12}(\R)}^{2} \leq C( \|u_0\|_{H^1(\R)}, \|v_0\|_{H^{\frac 12}(\R)}).
\end{align*}
\end{proposition}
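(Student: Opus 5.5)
The plan is to combine the conserved quantities $E_1$, $E_2$, $H$ from \eqref{energy} with Gagliardo--Nirenberg type inequalities, and close a bound for
\[
X(t):=\|\dx u(t)\|_{L^2}^2+\||\dx|^{\frac12}v(t)\|_{L^2}^2 .
\]
Mass conservation gives $\|u(t)\|_{L^2}=\|u_0\|_{L^2}\le r$ for all $t$. The conservation of $E_2$ then controls $\|v(t)\|_{L^2}$:
\[
\tfrac12\|v(t)\|_{L^2}^2\le |E_2(u_0,v_0)|+\|u\|_{L^2}\|\dx u\|_{L^2}\le |E_2(u_0,v_0)|+r\,X(t)^{\frac12}.
\]
Thus $\|v\|_{L^2}^2\les C_0+rX^{1/2}$, with $C_0$ depending on $\|u_0\|_{H^1},\|v_0\|_{L^2}$. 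This step is what makes the smallness of $r$ enter, since $\|v\|_{L^2}$ is not itself conserved.

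Next I bound the non-quadratic terms of $H$ from below. For the cubic term I use the Sobolev embedding $\dot H^{\frac16}\subset L^3$ together with interpolation between $L^2$ and $\dot H^{\frac12}$:
\[
\|v\|_{L^3}^3\les \|v\|_{L^2}^{2}\,\||\dx|^{\frac12}v\|_{L^2}.
\]
For the coupling term, $|\int v|u|^2|\le \|v\|_{L^2}\|u\|_{L^4}^2\les \|v\|_{L^2}\|u\|_{L^2}^{3/2}\|\dx u\|_{L^2}^{1/2}$. For the quartic term, $\|u\|_{L^4}^4\les \|u\|_{L^2}^3\|\dx u\|_{L^2}$. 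Substituting the bound on $\|v\|_{L^2}^2$, the cubic term is at most $C(C_0+rX^{1/2})X^{1/2}\les C C_0X^{1/2}+CrX$. The coupling term is at most $(C_0+rX^{1/2})^{1/2}r^{3/2}X^{1/4}$, and the quartic term is at most $r^3X^{1/2}$; both are sublinear in $X$.

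Inserting these bounds into $H(u(t),v(t))=H(u_0,v_0)$ gives, since $\min(1,\tfrac12)X\le$ the quadratic part of $H$,
\[
\tfrac12 X(t)\le |H(u_0,v_0)|+C|\rho| rX+C(C_0,r)\big(1+X^{\frac12}+X^{\frac14}+X^{\frac38}\big).
\]
The term $CrX$, which comes from the cubic $v^3$ contribution, is the critical one. I choose $r$ so that $C|\rho| r\le\frac14$, absorb it into the left-hand side, and handle the sublinear powers with Young's inequality. This gives $X(t)\le C(\|u_0\|_{H^1},\|v_0\|_{H^{1/2}})$, where $|H(u_0,v_0)|$ and $|E_2(u_0,v_0)|$ are bounded by Sobolev in terms of $\|u_0\|_{H^1}$ and $\|v_0\|_{H^{1/2}}$. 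Feeding this back, the $E_2$ bound controls $\|v\|_{L^2}$ and mass controls $\|u\|_{L^2}$, which gives the full $H^1\times H^{\frac12}$ bound. The constant is increasing in its arguments by construction.

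The main obstacle is the cubic term $\frac\rho3\int v^3$. It scales exactly like $\|v\|_{L^2}^2\||\dx|^{1/2}v\|_{L^2}$, and since $\|v\|_{L^2}^2$ is only controlled through $E_2$ up to $r X^{1/2}$, the bound produces a term linear in $X$ with coefficient proportional to $r$. The smallness assumption \eqref{EnergL2} is used precisely to absorb this term. One must also take care that $r$ depends only on $\rho$ and $\beta$ through the universal Gagliardo--Nirenberg constants, and not on the data.
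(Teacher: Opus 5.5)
Your proposal is correct and follows essentially the same route as the paper. Both arguments control $\|v(t)\|_{L^2}^2$ through conservation of $E_1$ and $E_2$ up to a term $r\|\dx u(t)\|_{L^2}$, bound the cubic, coupling and quartic terms of $H$ by Gagliardo--Nirenberg, and use the smallness of $r$ precisely to absorb the term of order $|\rho| r X$ coming from $\frac{\rho}{3}\int v^3$. The differences are cosmetic: you substitute the $E_2$ bound directly, whereas the paper applies Young's inequality term by term. The stray $X^{3/8}$ power you list is harmless, since all the remaining powers are sublinear.
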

\begin{proof}
By the conservation of $E_{2}$ and $E_{1}$, Cauchy-Schwarz and Young's inequality, we have 
\begin{align}
\tfrac{1}{2}\|v(t)\|_{L^2}^{2}&  \leq \bigg| \text{Im} \int_{\R} u(t,x)  \cj{ \dx u(t,x)} \bigg| + \|u_0\|_{L^2}\|\dx u_0\|_{L^2} +  \tfrac{1}{2}\|v_0\|_{L^2}^2  \notag \\
& \leq \| u(t)\|_{L^2} \|\dx u(t)\|_{L^2}+ \|u_0\|_{L^2}\|\dx u_0\|_{L^2} +  \tfrac{1}{2}\|v_0\|_{L^2}^2 \notag \\
& \leq \|u_0\|_{L^2} \|\dx u(t)\|_{L^2} + C_1( \|u_0\|_{H^1}, \|v_0\|_{L^2}). 
\label{Energ1}
\end{align}
Next by the conservation of $H$, we may write 
\begin{align*}
\tfrac{1}{2}\|  |\dx|^{\frac 12} v(t)\|_{L^2}^{2} + \| \dx u(t)\|_{L^2}^{2} = H(u_0, v_0) + \frac{\rho}{3} \int_{\R}v(t)^3 dx -\int_{\R} v(t) |u(t)|^2 dx -\frac{\be}{2}\int_{\R} |u(t)|^4 dx,
\end{align*}
and we need to estimate the remaining time dependent terms on the right-hand side.
By the Gagliardo-Nirenberg inequality, $E_1$ conservation, and Young's inequality,
\begin{align}
\frac{|\be|}{2} \int_{\R} |u(t)|^4 dx \les \|u(t)\|_{L^2}^{3} \|\dx u(t)\|_{L^2} \leq C_{\eps}\|u_0\|_{L^2}^{6} +\eps \|\dx u(t)\|_{L^2}^2,\label{Energ2}
\end{align}
for any $\eps>0$. Next, by Cauchy-Schwarz, \eqref{Energ1} and \eqref{Energ2}, we have 
\begin{align}
\bigg| \int_{\R} v(t) & |u(t)|^2 dx \bigg| \leq \|v(t)\|_{L^2} \| u(t)\|_{L^4}^{2}  \notag \\
& \leq \eps^{\frac 12}\| v(t)\|_{L^2}^{2} + C\eps^{-\frac 12} \| u(t)\|_{L^4}^{4}  \notag\\
& \les  \eps^{\frac 12}\| v(t)\|_{L^2}^{2}+ C\eps^{-\frac 12} \|u(t)\|_{L^2}^{3}\|\dx u(t)\|_{L^2} \notag\\
& \les  \eps^{\frac 12}\| v(t)\|_{L^2}^{2}+ C\eps^{-\frac 12} \|u_0\|_{L^2}^{3}\|\dx u(t)\|_{L^2} \notag\\
& \leq \eps^{\frac 12}  \|u_0\|_{L^2} \|\dx u(t)\|_{L^2}^{2} +C_1( \|u_0\|_{H^1}, \|v_0\|_{L^2}) +\eps^{\frac 12} \|\dx u(t)\|_{L^2}^{2} + C_{\eps}\|u_0\|_{L^2}^{6}, \label{Energ3}
\end{align}
for any $\eps>0$. 
Lastly, by the Gagliardo-Nirenberg inequality, \eqref{Energ1} and \eqref{EnergL2}, we have 
\begin{align*}
 \frac{|\rho|}{3} \bigg| \int_{\R} v(t)^3 dx \bigg|
 & \leq C|\rho| \| v(t)\|_{L^2}^{2} \| |\dx|^{\frac 12}v(t)\|_{L^2}  \\
 &\leq C|\rho| \|u_0\|_{L^2} \|\dx u(t)\|_{L^2}  \| |\dx|^{\frac 12}v(t)\|_{L^2} +CC_1|\rho|\| |\dx|^{\frac 12}v(t)\|_{L^2} \\
 &\leq  Cr^{2} (\tfrac{1}{2}\|  |\dx|^{\frac 12} v(t)\|_{L^2}^{2} + \| \dx u(t)\|_{L^2}^{2} ) + C_{2}.
\end{align*}
Combining this with \eqref{Energ2}, \eqref{Energ3} and choosing $\eps>0$ and $r>0$ sufficiently small, we obtain
\begin{align*}
\tfrac{1}{2}\|  |\dx|^{\frac 12} v(t)\|_{L^2}^{2} + \| \dx u(t)\|_{L^2}^{2} \leq C(\|u_0\|_{H^1}, \| v_0\|_{L^2}).
\end{align*}
Inserting this into \eqref{Energ1} then gives control on $\|v(t)\|_{L^2}$. This completes the proof.
\end{proof}

\begin{ackno}\rm 
P. Dai is supported by the Commonwealth through an Australian Government Research
Training Program Scholarship.
J.F. was partially supported by the ARC project FT230100588.
\end{ackno}


\begin{thebibliography}{99}


\bibitem{SBO4}
J. Angulo, C. Matheus, D. Pilod, {\it Global well-posedness and non-linear stability of periodic traveling waves for a Schrödinger-Benjamin-Ono system}, Commun. Pure Appl. Anal. 8, 815--844 (2009).

 


\bibitem{SBO1}
D. Bekiranov, T. Ogawa, G. Ponce, {\it Interaction equations for short and long dispersive waves}, J.Funct. Anal. 158, 357--388 (1998).

\bibitem{App5}
 D.J. Benney, {\it Significant interactions between small and large scale surface waves}, Stud. Appl. Math.
55 (1976), 93--106.




\bibitem{BOP}
A. Bényi, T. Oh, Tadahiro, O. Pocovnicu,
{\it On the probabilistic Cauchy theory of the cubic nonlinear Schrödinger equation on $\R^d$, $d\geq 3$},
Trans. Am. Math. Soc., Ser. B 2, 1--50 (2015).

\bibitem{BL}
H.~Biagioni, F.~Linares, 
{\it Ill-posedness for the derivative Schr\"odinger and generalized Benjamin-Ono equations},
 Trans. Amer. Math. Soc. 353 (2001), no. 9, 3649--3659.




\bibitem{BO93}
J.~Bourgain, 
{\it Fourier transform restriction phenomena for certain lattice subsets and applications to nonlinear evolution equations, I: Schr\"odinger equations,} Geom. Funct. Anal. 3 (1993), 107--156.

\bibitem{BO932}
J. Bourgain, {\it Fourier transform restriction phenomena for certain lattice subsets and applications to non- linear evolution equations, Part II: The KDV-equation}, Geom. Funct. Anal. 3, 209--262 (1993).

\bibitem{BO98}
J. Bourgain,
{\it Refinements of Strichartz’ inequality and applications to 2D-NLS
 with critical nonlinearity},
Internat. Math. Res. Notices 1998, no. 5, 253–-283.

\bibitem{BL}
J.~Bourgain, D.~Li, 
{\it On an endpoint Kato-Ponce inequality},
 Differential Integral Equations 27 (2014), no. 11-12, 1037--1072.

\bibitem{BP}
N. Burq, F. Planchon,
{\it On well-posedness for the Benjamin-Ono equation},
Math. Ann. 340, No. 3, 497--542 (2008).


\bibitem{CFL}
A.~Chapouto, J.~Forlano, T. Laurens,
{\it  On the well-posedness of the intermediate nonlinear Schrödinger equation on the line},
Proc. Amer. Math. Soc. Ser. B 11 (2024), 452--468.


\bibitem{CFLOP-2}
A.~Chapouto, J.~Forlano, G.~Li, T.~Oh, D.~Pilod,
{\it  Intermediate long wave equation in negative Sobolev spaces},
Proc. Amer. Math. Soc. Ser. B, B 11 (2024), 452--468.


\bibitem{CLOZ}
A.~Chapouto, G. Li, T. Oh, T. Zhao,
{\it Shallow-water convergence of the intermediate long wave equation in $L^2$},
	arXiv:2511.15905 [math.AP].

\bibitem{CLOP}
A.~Chapouto, G.~Li, T.~Oh, D.~Pilod,  {\it Deep-water limit of the intermediate long wave equation in $L^2$}, Math. Res. Lett. 31 (2024), no. 6, 1655--1692.

\bibitem{CW}
M. Christ, M. Weinstein, {\it Dispersion of small amplitude solutions of the generalized Korteweg-de Vries
equation}, J. Funct. Anal. 100 (1991), 87--109.

\bibitem{CHT}
J. Colliander, J. Holmer, N. Tzirakis, 
{\it Low regularity global well-posedness for the Zakharov and Klein-Gordon-Schrödinger systems},
Trans. Am. Math. Soc. 360, No. 9, 4619--4638 (2008).

\bibitem{CL} 
A. J. Corcho, F. Linares, 
{\it Well-posedness for the Schrödinger-Korteweg-de Vries system}, 
Trans. Am. Math. Soc. 359, No. 9, 4089--4106 (2007).



\bibitem{App4}
 V. Djordjevic, L. Redekopp, 
 {\it On two-dimensional packet of capillary-gravity waves}, J. Fluid Mech. 79
(1977), 703--714.

\bibitem{SBO3}
L. Domingues, {\it Sharp well-posedness results for the Schrödinger-Benjamin-Ono system}, Adv. Differ. Equ. 21, 31--54 (2016).



\bibitem{App1}
M. Funakoshi, M. Oikawa, {\it The resonant interaction between a long internal gravity wave and a surface gravity wave packet}, J. Phys. Soc. Japan, 52 (1983), 1982--1995.




\bibitem{GL}
L. Gassot, T. Laurens,
{\it Global well-posedness for the ILW equation in $H^s(\T)$ for $s>-\frac 12$},
	arXiv:2506.05149 [math.AP].
	


\bibitem{GKT-2}
P.~G\'erard, T.~Kappeler, P.~Topalov,
{\it 
Sharp well-posedness results of the Benjamin-Ono equation in
$H^s(\T, \R)$ 
and qualitative properties of its solutions}, 
Acta Math.
231 
(2023), no. 1, 
 31--88.



\bibitem{GO}
L.~Grafakos, S.~Oh, 
{\it The Kato-Ponce inequality},
 Comm. Partial Differential Equations 39 (2014), no. 6, 1128--1157.
 
 

\bibitem{App3}
R. Grimshaw, {\it The modulation of an internal gravity-wave packet and the resonance with the mean motion}, Stud. Appl. Math. 56 (1977), 241--266.



\bibitem{U2V22}
M. Hadac, S. Herr, H. Koch, {\it Well-posedness and scattering for the KP-II equation in a critical space},
Ann. Inst. H. Poincaré Anal. Non Linéaire 26 (2009), no. 3, 917--941. {\it Erratum to “Well-posedness and
scattering for the KP-II equation in a critical space”}, Ann. Inst. H. Poincaré Anal. Non Linéaire 27
(2010), no. 3, 971--972.

\bibitem{U2V23}
S. Herr, D. Tataru, N. Tzvetkov, {\it Global well-posedness of the energy critical nonlinear Schr\"{o}dinger
equation with small initial data in $H^{1}(\T^3)$}, Duke Math. J. 159 (2011) 329--349.


\bibitem{IS}
M.~Ifrim, J.-C.~Saut,
{\it The lifespan of small data solutions for Intermediate Long Wave equation (ILW)},
Comm. Partial Differential Equations 50 (2025), no. 3, 258--300.

\bibitem{IT}
M. Ifrim, D. Tataru,
{\it Well-posedness and dispersive decay of small data solutions for the Benjamin-Ono equation},
Ann. Sci. Éc. Norm. Supér. (4) 52, No. 2, 297--335 (2019).

\bibitem{IK}
A. D. Ionescu, C. E. Kenig, 
{\it Global well-posedness of the Benjamin–Ono equation in low-regularity spaces},
J. Am. Math. Soc. 20, No. 3, 753--798 (2007).


\bibitem{Iorio}
R. J. Iório, {\it On the Cauchy problem for the Benjamin-Ono equation}, Commun. Partial. Differ. Equ. 11, 1031--1081 (1986).



\bibitem{App2}
 V. Karpman, {\it On the dynamics of sonic-Langmuir soliton,} Physica Scripta. 11 (1975), 263--265.
 

\bibitem{KLV}
R. Killip, T. Laurens, M. Vişan, 
{\it Sharp well-posedness for the Benjamin-Ono equation},
Invent. Math. 236, No. 3, 999--1054 (2024).

\bibitem{U2V21}
 H. Koch, D. Tataru, {\it A priori bounds for the 1D cubic NLS in negative Sobolev spaces}, Int. Math. Res.
Not. (2007), no. 16, Art. ID rnm053, 36 pp.

\bibitem{Obernotes}
H. Koch, D. Tataru, M. Vişan, 
{\it Dispersive equations and nonlinear waves. Generalized Korteweg-de Vries, nonlinear Schrödinger, wave and Schrödinger maps}, 
Oberwolfach Seminars 45. Basel: Birkhäuser/Springer (ISBN 978-3-0348-0735-7/pbk; 978-3-0348-0736-4/ebook). xii, 312 p. (2014).

\bibitem{KT}
H. Koch, N. Tzvetkov, {\it On the local well-posedness of the Benjamin-Ono equation in $H^{s}(\R)$}, Int. Math. Res. Not. 2003, 1449--1464 (2003).

\bibitem{LMP}
F. Linares, A. J. Mendez, D. Pilod, 
{\it Well-posedness for the extended Schrödinger-Benjamin-Ono system}, Vietnam J. Math. 52, No. 4, 1043--1066 (2024).

\bibitem{Molinet1}
L. Molinet, 
{\it Global well-posedness in $L^2$
 for the periodic Benjamin-Ono equation}, 
Am. J. Math. 130, No. 3, 635--683 (2008).

\bibitem{MP}
L.~Molinet, D.~Pilod, 
{\it The Cauchy problem for the Benjamin-Ono equation in $L^2$ revisited},
 Anal. PDE 5 (2012), no. 2, 365--395. 
 
 \bibitem{MP2}
L.~Molinet, D.~Pilod, 
{\it Global well-posedness and limit behavior for a higher-order Benjamin-Ono equation}, 
Comm. Partial Differential Equations 37 (2012), no. 11, 2050--2080.



\bibitem{MST}
L. Molinet, J. C. Saut,  N. Tzvetkov, {\it Ill-posedness issues for the Benjamin-Ono and related equations}, SIAM J. Math. Anal. 33, 982--988 (2001).



\bibitem{OhSBO}
T. Oh, {\it Invariance of the Gibbs measure for the Schrödinger-Benjamin-Ono system}, SIAM J.Math.Anal. 41, 2207--2225 (2010).

\bibitem{OT}
T. Ozawa, Y. Tsutsumi,
{\it Space-time estimates for null gauge forms and nonlinear Schrödinger equations}, 
 Differential Integral Equations 11 (1998), no. 2, 201--222.



\bibitem{SBO2}
 H. Pecher, {\it Rough solutions of a Schrödinger-Benjamin-Ono system}, Differ. Integral Equ. 19, 517--535 (2006).



\bibitem{TAO04}
T.~Tao, 
{\it Global well-posedness of the Benjamin-Ono equation in $H^1(\mathbf R)$}, 
J. Hyperbolic Differ. Equ. 1 (2004), no. 1, 27--49. 

\bibitem{TAObook}
T.~Tao, 
\textit{Nonlinear dispersive equations.
Local and global analysis}, 
CBMS Reg. Conf. Ser. Math., 106.
Published for the Conference Board of the Mathematical Sciences, Washington, DC; by the American Mathematical Society, Providence, RI, 2006. xvi+373 pp.





\end{thebibliography}
\end{document}